\theoremstyle{plain}
\newtheorem{ttheorem}{Theorem}
\newtheorem{theorem}{Theorem}[section]
\newtheorem{corollary}[theorem]{Corollary}
\newtheorem{lemma}[theorem]{Lemma}
\newtheorem{proposition}[theorem]{Proposition}
\newtheorem{definition}[theorem]{Definition}
\newtheorem{setting}[theorem]{Setting}
\theoremstyle{remark}
\newtheorem{remark}[theorem]{Remark}
\newtheorem{example}[theorem]{Example}
\def\R{{\mathbb R}}
\def\N{{\mathbb N}}
\def\Q{{\mathbb Q}}
\def\E{{\mathbb E}}
\def\P{{\mathbb P}}
\def\D{{\mathbb D}}
\def\F{{\mathbb F}}
\newcommand{\n}{\Vert}
\newcommand{\calB}{\mathcal B}
\newcommand{\calC}{\mathcal C}
\newcommand{\calF}{\mathcal F}
\newcommand{\calG}{\mathcal G}
\newcommand{\calH}{\mathcal H}
\newcommand{\calL}{\mathcal L}
\newcommand{\calN}{\mathcal N}
\newcommand{\calP}{\mathcal P}
\newcommand{\calQ}{\mathcal Q}
\newcommand{\calA}{\mathcal A}
\newcommand{\calS}{\mathcal S}
\newcommand{\eps}{\varepsilon}
\newcommand{\umd}{{\rm UMD}} 
\newcommand{\one}{{\rm 1}}
\DeclareMathOperator*{\esssup}{ess\,sup}
\newcommand{\inv}[1]{\frac{1}{#1}}
\newcommand{\tinv}[1]{\tfrac{1}{#1}}
\newcommand{\maxsym}{\vee}
\newcommand{\minsym}{\wedge}
\newcommand{\sptext}[3]{\hspace{#1 em}\mbox{#2}\hspace{#3 em}}
\newcommand{\equa}{\begin{eqnarray*}}
\newcommand{\tion}{\end{eqnarray*}}
\newcommand{\law}{{\rm law}}
\newcommand{\adapt}{{\mathcal{A}}}
\newcommand{\bmo}{{\rm BMO}}
\begin{document}

\title[Decoupling]{On decoupling in Banach spaces}

\author{Sonja Cox}
\address{Korteweg-de Vries Institute for Mathematics,
         University of Amsterdam,
         Postbus 94248,
         NL-1090 GE Amsterdam, Netherlands}
\email{s.g.cox@uva.nl}

\author{Stefan Geiss}
\address{Department of Mathematics and Statistics,
        University of Jyv\"askyl\"a,
        P.O. Box 35,
        FIN-40014 University of Jyv\"askyl\"a,
        Finland}
\email{stefan.geiss@jyu.fi}

\begin{abstract}
We consider decoupling inequalities for random variables taking values 
in a Banach space $X$. We restrict the class of distributions that appear
as conditional distributions while decoupling and show that each adapted process
can be approximated by a Haar type expansion in which only the same conditional 
distributions appear. Moreover, we show that in our framework a progressive 
enlargement of the underlying filtration does not effect the decoupling 
properties (e.g., the constants involved). 
As special case we  deal with one-sided moment inequalities when decoupling dyadic
(i.e., Paley-Walsh) martingales. We establish the decoupling constant in $\ell^\infty_K$.
As an example of an application, we demonstrate that 
Burkholder-Davis-Gundy type inequalities for stochastic integrals 
of $X$-valued processes can be obtained from decoupling inequalities 
for $X$-valued dyadic martingales.
\end{abstract}

\maketitle

\setcounter{tocdepth}{1}
\tableofcontents


\section{Introduction}
\label{sec:intro}

The UMD-property is crucial in harmonic and stochastic analysis 
in Banach spaces, see e.g.\ \cite{HytonenEtAl:2016, HytonenEtAl:2017}. 
A Banach space $X$ is said to satisfy the UMD-property if there exists a 
constant $ c_\eqref{eqn:umd}\ge 1$ such that for every martingale difference sequence 
$(d_n)_{n=1}^N$ with values in $X$ one has that 
\begin{equation}\label{eqn:umd}
       \frac{1}{c_\eqref{eqn:umd}} \left \| \sum_{n=1} d_n \right \|_{\calL^2(\P;X)} 
   \le \left \| \sum_{n=1}^N \theta_n d_n \right \|_{\calL^2(\P;X)}
   \le c_\eqref{eqn:umd} \left \| \sum_{n=1}^N d_n \right \|_{\calL^2(\P;X)}
\end{equation}
for all signs $\theta_n \in \{ -1,1 \}$, i.e., one has \textbf{U}nconditional \textbf{M}artingale \textbf{D}ifferences. 
Here we vary over all stochastic bases $(\Omega,\calF,\P,(\calF_n)_{n=0}^N)$ 
and martingale difference sequences $d_n:\Omega\to X \in \calL^2(X)$ with respect to 
$(\calF_n)_{n=0}^N$.
The transformations 
\[ \sum_{n=1}^N d_n \mapsto \sum_{n=1}^N \theta_n d_n \]
are called UMD-transforms. There is an inherent property of these UMD-trans\-forms: Assume that 
$\{1,\ldots,N\} = \bigcup_{\ell=1}^L I_\ell$, where $I_1,\ldots,I_L$ are non-empty consecutive intervals, so that 
$D_\ell:= \sum_{n\in I_\ell} d_n$ can be interpreted as a martingale difference sequence as well, then any 
UMD-transform 
\[ \sum_{\ell=1}^L D_\ell \mapsto \sum_{\ell=1}^L \eta_\ell  D_\ell \]
for $\eta_1,\ldots,\eta_L\in \{ -1,1 \}$ can be written as a UMD-transform of the finer sequence 
$(d_n)_{n=1}^N$.  Combining this observation  with an approximation argument of Maurey \cite{Maurey:1975}, where
a martingale difference sequence is approximated by a blocked sequence of Haar functions, one obtains: In 
order to verify the UMD-property of the Banach space $X$, it is sufficient to consider $X$-valued 
Haar- or dyadic martingales
(a martingale is \emph{dyadic} if it is adapted to a dyadic 
filtration, dyadic martingales are also known as \emph{Paley-Walsh martingales}).
\smallskip

On the other hand, McConnell \cite{McConnell:1989} proved that the UMD property is equivalent to
the existence of a constant $c_\eqref{eqn:McConnell}$ such that
\begin{equation}\label{eqn:McConnell}
       \frac{1}{c_\eqref{eqn:McConnell}} \left \| \sum_{n=1}^{N} d_n \right \|_{\calL^2(\P;X)} 
   \le \left \| \sum_{n=1}^N e_n \right \|_{\calL^2(\P;X)}
   \le c_\eqref{eqn:McConnell} \left \| \sum_{n=1}^N d_n \right \|_{\calL^2(\P;X)}
\end{equation}
for all $N\in \N$ and $(\calF_n)_{n=1}^{N}$-martingale difference sequences $(d_n)_{n=1}^{N}$ and $(e_n)_{n=1}^{N}$ 
such that $\calL(d_n|\calF_{n-1}) = \calL(e_n|\calF_{n-1})$, i.e.\ $(e_n)_{n=1}^{N}$ and $(d_n)_{n=1}^{N}$ are \emph{tangent}.
\smallskip 

Note that the first and the second inequality in~\eqref{eqn:McConnell} are of identical 
type. This is no
longer true if one imposes the additional condition that $(e_n)_{n\in \N}$ is the
\emph{decoupled} tangent sequence of $(d_n)_{n\in \N}$. 
In this case we refer to first inequality in \eqref{eqn:McConnell} as an {\it upper decoupling inequality},
and we refer to the second inequality as a
{\it lower decoupling inequality}. Analogously, the 
first and second inequality in~\eqref{eqn:umd} are of the same type, and this is no 
longer the case if one replaces 
the (deterministic) $(\theta_n)_{n\in \N}$ in~\eqref{eqn:umd} by a (random) Rademacher sequence $(r_n)_{n\in \N}$:
then one obtains the upper- and lower randomized UMD inequalities studied in~\cite{Garling:1990}.
Examples of spaces that satisfy upper decoupling inequalities but do not have the UMD property are $L^1$ and 
the space of bounded $\sigma$-additive measures (see e.g.~\cite[Examples 4.7 and 4.8]{CoxVeraar:2011}). Moreover, quasi-Banach 
spaces fail to satisfy the UMD property, but may satisfy decoupling 
inequalities, see~\cite[Section 5.1]{CioicaEtAl:2018} and e.g.~\cite[Example 4.7]{CoxVeraar:2011}.
\smallskip 

The notions of tangent and decoupled sequences
(see Definition~\ref{def:decoupled_sequence} below)
were introduced by Kwapie{\'n} and Woyczy{\'n}ski in 
\cite{KwapienWoyczynski:1989,KwapienWoyczynski:1991}, where also applications can be found. 
The decoupled tangent
sequence $(e_n)_{n=1}^N$ of a sequence $(d_n)_{n\in\N}$ (adapted to a filtration $(\calF_n)_{n\in \N}$)
is unique in distribution and replaces parts of the 
dependence structure of $(d_n)_{n=1}^N$ by a sequence of conditionally independent random variables.
Although the definition of decoupling might not be explicit, there are canonical representations of 
a decoupled tangent sequence, see Kwapie{\'n} and Woyczy{\'n}ski \cite{KwapienWoyczynski:1991}
and Montgomery-Smith \cite{MontgomerySmith:1998}.
\smallskip

There are various applications for decoupling in the literature.
The proofs of Burk\-holder \cite{Burkholder:1981a}  and Bourgain \cite{Bourgain:1983} of the equivalence of 
the UMD-property of a Banach space $X$ and the continuity of the $X$-valued Hilbert transform use decoupling 
arguments. For certain applications only one-sided inequalities are needed.
For example, one-sided decoupling inequalities for martingales
and the type- or cotype property imply martingale type or martingale cotype, respectively, and therefore
by Pisier \cite{Pisier:1975} an equivalent re-norming of the Banach space with a norm having a certain modulus of 
continuity or convexity, respectively. A classical case of decoupling, studied on its own, concerns randomly 
stopped sums of independent random variables, see for example the results of Klass \cite{Klass:1988,Klass:1990}. 
Another application for decoupling is stochastic integration.
For example, to get sufficient conditions for the existence of stochastic integrals, 
only the upper decoupling is needed. A starting point for this was \cite[Section 6]{KwapienWoyczynski:1991} where the existence of  decoupled tangent processes for left quasi-continuous processes  in the Skorohod 
space is studied. Finally, Kallenberg~\cite{Kallenberg:2017} proved the 
existence of decoupled tangent semi-martingales and two-sided decoupling inequalities, and considered applications to multiple stochastic integrals.
\smallskip

Let us come back to the relation between \eqref{eqn:umd} and \eqref{eqn:McConnell}.
By the above reduction of the UMD-property to Haar- or dyadic martingales 
the equivalence of~\eqref{eqn:McConnell} to the UMD-property remains true 
if we require that $(e_n)_{n=1}^N$ is a 
decoupled tangent sequence of $(d_n)_{n=1}^N$. However, when attempting to pass from 
UMD-trans\-forms to upper (or lower) decoupling inequalities 
in \eqref{eqn:McConnell} we encounter the following problems:
\smallskip

{\bf (P1)} {\it Blocking arguments} do not work as expected: If in the above notation $(e_n)_{n=1}^N$ is a decoupled tangent sequence of $(d_n)_{n=1}^N$ and 
$(E_\ell)_{\ell=1}^L$ is a decoupled tangent sequence of $(D_\ell)_{\ell=1}^L$, then, in general, the distributions of 
$\sum_{n=1}^N e_n$ and $\sum_{\ell=1}^L E_L$ do not coincide. In other words, the decoupling of 
$(D_\ell)_{\ell=1}^L$ cannot be obtained by the decoupling of  $(d_n)_{n=1}^N$
by taking blocks.
\smallskip

{\bf (P2)} As mentioned above, the consideration of {\it Haar martingales} 
in its {\it natural filtration} are sufficient to decide whether a Banach space is a UMD-space.
On the other hand, it is unknown whether, for example, upper decoupling inequalities for 
dyadic martingales  in \eqref{eqn:McConnell} 
imply upper decoupling for all martingales, see Section \ref{sec:relations_problems}. Regarding the filtration
it was, for example, not clear whether upper decoupling inequalities for $X$-valued stochastic integrals 
with respect to a scalar Brownian motion $(W(t))_{t \ge 0}$ in its {\it natural} filtration imply upper
decoupling inequalities under a {\it progressive enlargement} of the filtration such that $(W(t))_{t \ge 0}$
remains a Brownian motion in this filtration.
\smallskip 

The aim of this article is to contribute to these problems as follows:
\smallskip

\underline{Section \ref{sec:factorization}}: Theorem \ref{thm:factor} provides 
      a factorization of a random variable along regular conditional probabilities. 
      With this result we contribute to the results of
      Montgomery-Smith \cite{MontgomerySmith:1998} (see also Kallenberg \cite[Lemma 3.22]{Kallenberg:2002}).
      This result is the key to approximate our adapted processes in terms of Haar-like series.
\smallskip

\underline{Section \ref{sec:reduction_decoupling}}: The main result of Section \ref{sec:reduction_decoupling} is 
Theorem \ref{thm:simple_decoupling_of_vs_full_decoupling} with the following corollary 
(the definition of $\calP_{p\text{-ext}}$ and $\adapt_p(\Omega,\F;X,\calP_{p\textnormal{-ext}})$ is discussed 
below):
\newpage
\begin{ttheorem}
\label{thm:simple_decoupling_of_vs_full_decoupling_intro}
Let $X,Y,Z$ be Banach spaces, where $X$ is separable, let $S\in L(X,Y)$ and $T\in L(X,Z)$,
and let $p\in (0,\infty)$.
Assume $\underline{\Psi}_\lambda,\overline{\Psi}:[0,\infty) \to [0,\infty)$,
where $\Delta$ is a non-empty index-set, such that
\[
  \sup_{\xi\in (0,\infty)}  ( 1 + | \xi | )^{-p} \underline{\Psi}_\lambda (\xi) < \infty
    \sptext{1}{and}{1}
 \sup_{\xi\in (0,\infty)}  ( 1 + | \xi | )^{-p}  \overline{\Psi}(\xi) <\infty,
\]
and such that the $\underline{\Psi}_\lambda$ are lower semi-continuous and $\overline{\Psi}$ is upper semi-continuous.
Let $\calP$ be a set of Borel probability measures on $X$ such that 
$\int_X \|x\|^p \mu(dx)<\infty$ for all $\mu \in \calP$ and such that $\delta_0\in \calP$.
Then the following assertions are equivalent:
\begin{enumerate}
 \item\label{item:general_decoupling_intro} 
      For every stochastic basis $(\Omega,\calF,\P,\F)$
      and finitely supported\footnote{Only for only finitely many $n$ one has  $d_n\not \equiv 0$.}
      $(d_n)_{n=1}^\infty \in \adapt_p(\Omega,\F;X,\calP_{p\textnormal{-ext}})$ 
      it holds that
      \begin{equation}\label{eq:intro_upperdecouphard}
          \sup_{\lambda \in \Delta} 
	    \E \underline{\Psi}_{\lambda} \left ( 
	      \left \| 
		\sum_{n=1}^N S d_n 
	      \right \|_Y
	    \right )
	  \le 
	    \E \overline{\Psi} \left ( 
	      \left \| 
		\sum_{n=1}^N T e_n 
	      \right \|_Z 
	    \right )
	    ,
      \end{equation}
      whenever $(e_n)_{n\in \N}$ is an $\F$-decoupled tangent sequence 
      of $(d_n)_{n\in \N}$.
\item\label{item:simple_decoupling_intro}
      For every sequence
      of independent random variables 
      $(\varphi_n)_{n=1}^N \subset\calL^p(\P;X)$ 
      satisfying 
      $\calL(\varphi_n)\in \calP,$ and every
      $A_0\in \{\emptyset,\Omega\}$, $A_n\in \sigma(\varphi_1,\ldots,\varphi_n)$, $n\in \{1,\ldots,N\}$, 
      it holds that
      \begin{equation}\label{eq:intro_upperdecoupsimple}
	\sup_{\lambda \in \Delta} 
	  \E \underline{\Psi}_{\lambda} \left ( 
	    \left \|  
	      \sum_{n=1}^N  1_{A_{n-1}} S \varphi _n 
	    \right \|_Y 
	  \right )
         \le 
         \E \overline{\Psi} \left ( 
	  \left \| 
	    \sum_{n=1}^N 1_{A_{n-1}} T \varphi'_n 
	  \right \|_Z \right 
	  ), 
      \end{equation} 
      where $(\varphi'_n)_{n=1}^N$ is an independent copy
      of $(\varphi_n)_{n=1}^N$. 
\end{enumerate}
\end{ttheorem}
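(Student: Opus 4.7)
Direction (\ref{item:general_decoupling_intro}) $\Rightarrow$ (\ref{item:simple_decoupling_intro}) is a straightforward specialization, while the converse is the substantive part: the general adapted sequence is to be approximated by Haar-type expansions that already fall in the simple product form of (\ref{item:simple_decoupling_intro}), with Theorem \ref{thm:factor} providing the structural backbone.

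\textbf{Easy direction.} Given independent $(\varphi_n)_{n=1}^N$ with $\calL(\varphi_n)\in\calP$ and $A_{n-1}\in\sigma(\varphi_1,\ldots,\varphi_{n-1})$, I would set $d_n := 1_{A_{n-1}}\varphi_n$ on the filtration $\calF_n := \sigma(\varphi_1,\ldots,\varphi_n)$ with $\calF_0 := \{\emptyset,\Omega\}$. Since $A_{n-1}\in\calF_{n-1}$ and $\varphi_n$ is independent of $\calF_{n-1}$, the regular conditional law of $d_n$ given $\calF_{n-1}$ equals $1_{A_{n-1}}\calL(\varphi_n)+1_{A_{n-1}^c}\delta_0$, taking only the two values $\calL(\varphi_n)\in\calP$ and $\delta_0\in\calP$. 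Hence $(d_n)\in\adapt_p(\Omega,\calF;X,\calP_{p\text{-ext}})$, and on an appropriate enlargement of the probability space an $\F$-decoupled tangent is given by $e_n := 1_{A_{n-1}}\varphi'_n$ with $(\varphi'_n)$ an independent copy of $(\varphi_n)$. Applying (\ref{item:general_decoupling_intro}) then yields (\ref{item:simple_decoupling_intro}).

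\textbf{Hard direction.} Fix a finitely supported $(d_n)_{n=1}^N\in\adapt_p(\Omega,\calF;X,\calP_{p\text{-ext}})$. By Theorem \ref{thm:factor}, after possibly enlarging the base, each $d_n$ factors measurably through a sequence $(\varphi_n)_{n=1}^N$ of independent random variables with $\calL(\varphi_n)\in\calP$: the mixture structure of the conditional laws in $\calP_{p\text{-ext}}$ is disentangled by reading off the mixing weights from $\sigma(\varphi_1,\ldots,\varphi_{n-1})$. Combining this factorization with an approximation of $\calP_{p\text{-ext}}$-valued kernels by finite $\calP$-mixtures, I would approximate each $d_n$ in $\calL^p(\P;X)$ by a Haar-type step process
\[
d_n^{(k)} \;=\; \sum_{j=1}^{J_{n,k}} 1_{A_{n-1,j}^{(k)}}\psi_{n,j}^{(k)},
\]
where, for fixed $k$, the family $(\psi_{n,j}^{(k)})_{n,j}$ is jointly independent with laws in $\calP$, and the $\{A_{n-1,j}^{(k)}\}_j$ form a measurable partition of $\Omega$ contained in the $\sigma$-algebra generated by the $\psi_{n',j'}^{(k)}$ with $n'<n$. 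Re-indexing this doubly-indexed family along a consistent time ordering, $\sum_{n=1}^N d_n^{(k)}$ is exactly a sum of the type in (\ref{item:simple_decoupling_intro}), and its $\F^{(k)}$-decoupled tangent is the corresponding sum obtained by replacing each $\psi_{n,j}^{(k)}$ by an independent copy. Apply (\ref{item:simple_decoupling_intro}) at each $k$, then let $k\to\infty$: the left-hand side uses Fatou's lemma together with the lower semi-continuity of $\underline{\Psi}_\lambda$, the $\calL^p$-convergence of the approximating partial sums, and the polynomial growth $\underline{\Psi}_\lambda(\xi)\lesssim(1+\xi)^p$; the right-hand side uses the reverse Fatou argument, legitimized by the upper semi-continuity of $\overline{\Psi}$, its polynomial growth, and the uniform $\calL^p$-integrability of $\|\sum_n T e_n^{(k)}\|_Z$ obtained from the $\calL^p$-bounds on the tangent generators. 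The supremum over $\lambda\in\Delta$ poses no difficulty since it appears only on the smaller side of the inequality.

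\textbf{Main obstacle.} The delicate step is the construction of $d_n^{(k)}$ with properties (a) adaptedness to a filtration compatible with the product structure of (\ref{item:simple_decoupling_intro}), (b) joint independence of the driving variables across $(n,j)$ with laws in $\calP$, and (c) $\calL^p$-convergence fast enough to absorb the polynomial growth of both $\underline{\Psi}_\lambda$ and $\overline{\Psi}$ in the limit. This is precisely where Theorem \ref{thm:factor} plays its decisive role, translating an abstract $\calP_{p\text{-ext}}$-valued kernel into a concrete sequence of $\calP$-distributed coordinates that supports a Haar-like decomposition compatible with the decoupling operation.
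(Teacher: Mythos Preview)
Your easy direction is correct and matches the paper (Remark~\ref{remark:trivial_implication}).

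The hard direction has a genuine gap at precisely the point you flag as the ``main obstacle''. You assert that the approximating Haar-type process can be built with the partition sets $A_{n-1,j}^{(k)}$ lying in the $\sigma$-algebra generated by the $\psi_{n',j'}^{(k)}$ with $n'<n$, i.e.\ in the \emph{natural} filtration of the independent drivers. Theorem~\ref{thm:factor} does not deliver this. Via Corollary~\ref{cor:factor_through_subsigmaalgebra} one gets $d_n(\omega)=d_n^0(\omega,H_n(\omega,s))$ with $H_n$ uniform on $[0,1]$ and independent of $\calF_{n-1}$, and $d_n^0$ measurable in $\calF_{n-1}\otimes\calB([0,1])$; approximating $d_n^0$ by simple functions produces pieces $A_{n-1,j}\in\calF_{n-1}$, which may be strictly larger than anything generated by the previous coordinates. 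Passing from ``$A_{n-1}$ in an enlarged filtration'' to ``$A_{n-1}$ in $\sigma(\varphi_1,\ldots,\varphi_{n-1})$'' is exactly the progressive-enlargement problem (P2) the paper sets out to solve, and it does not come for free from the factorization.

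In the paper this gap is closed by a separate Maurey-type approximation (Proposition~\ref{prop:extend}, resting on Lemmas~\ref{lem:dyadic_aux} and~\ref{lem:dyadic_stefan}): given $A\in\calF$ and a non-Dirac law in $\calP$, one manufactures on the same space a string of auxiliary independent variables $\psi_1,\ldots,\psi_m$ with that law, independent of everything built so far, such that $A$ is approximated in $L^1$ by a set in $\sigma(\psi_1,\ldots,\psi_m)$. This step runs \emph{in addition to} Theorem~\ref{thm:factor}, and without it your re-indexed sum does not fall under the hypothesis of~(\ref{item:simple_decoupling_intro}). The extension $\calP\to\calP_{p\text{-ext}}$ likewise needs its own argument (part~(a) of the proof of Proposition~\ref{prop:extend}), via convolutions and weak limits through $\mu$-continuity sets. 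Finally, the paper handles the semi-continuity of $\underline{\Psi}_\lambda,\overline{\Psi}$ not by a Fatou/reverse-Fatou pass at the end but by first monotonically approximating them by continuous functions and then invoking the continuous-$\Phi$ equivalence (Corollary~\ref{cor:simple_decoupling_of_vs_full_decoupling_supremum}); your Fatou route could work, but the uniform $\calL^p$-integrability of the decoupled side would have to be proved rather than asserted.
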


In Theorem~\ref{thm:simple_decoupling_of_vs_full_decoupling_intro} 
we use Definition \ref{definition:p-measure_extension},
where we extend the set $\calP$ to $\calP_{p\textnormal{-ext}}\supseteq \calP$ with
\begin{equation*}
\begin{aligned}
\calP_{p\textnormal{-ext}}
 :=
\Big\{ 
  \mu \in \calP_p(X)
  \colon
&
  \forall j\in \N\,
  \exists\,
    K_j\in \N
    \text{ and }
    \mu_{j,1},\ldots,\mu_{j,K_j} \in \calP
\\ & 
  \text{such that }
  \mu_{j,1} 
  *
  \cdots
  *
  \mu_{j,K_j} 
  \stackrel{w^*}{\rightarrow}
  \mu
  \text{ as } 
  j\rightarrow \infty
\\ & 
  \text{and } 
  \left(
    \mu_{j,1} 
    *
    \cdots
    *
    \mu_{j,K_j} 
  \right)_{j\in \N}
  \text{ is uniformly $L^p$-integrable}
\Big\}.
\end{aligned}
\end{equation*}
In Definition \ref{definition:class_of_adapted_processes}
$\adapt_p(\Omega,\F;X,\calP_{p\textnormal{-ext}})$ is defined to be 
the set of $\F$-adapted sequences $(d_n)_{n\in \N}$ in $\calL^p(\P;X)$ such that the regular versions 
$\kappa_{n-1}$ of $\P(d_n\in\cdot \,|\, \calF_{n-1})$ satisfy $\kappa_{n-1}[\omega,\cdot] \in \calP$
on a set of measure one. Theorem~\ref{thm:simple_decoupling_of_vs_full_decoupling_intro} remains valid 
if one exchanges $(d_n)_{n=1}^N$ with $(e_n)_{n=1}^N$ in \eqref{eq:intro_upperdecouphard}
and $(\varphi _n)_{n=1}^N$ with $(\varphi'_n)_{n=1}^N$ in \eqref{eq:intro_upperdecoupsimple}, respectively.
\medskip

In Theorem \ref{thm:simple_decoupling_of_vs_full_decoupling_intro} (i) $\Rightarrow$ (ii) is obvious,
our contribution in (ii) $\Rightarrow$ (i) is as follows: We extend the class of random variables, 
the admissible tangent distributions, and allow every progressive enlargement of the natural filtration of 
$\varphi = (\varphi_n)_{n=1}^N$. 
We avoid the straight blocking argument as in the case of UMD-transform, but still get a counterpart to
the Haar martingales in  \eqref{eq:intro_upperdecoupsimple}. 
Regarding problem {\bf (P2)} we show that in  \eqref{eq:intro_upperdecouphard}  and 
\eqref{eq:intro_upperdecoupsimple} the same class of conditional distributions can be taken.
 Regarding the enlargement of filtration we obtain the positive result as desired.
 Common settings in Theorem \ref{thm:simple_decoupling_of_vs_full_decoupling_intro} are the
following:
\bigskip
\begin{center}
\begin{tabular}{|c|c|c|r|}\hline
$\Delta$ & $\underline{\Psi_\lambda}(\xi)$ & $\overline{\Psi}(\xi)$ 
& $\sup_{\lambda \in \Delta} \E \underline{\Psi}_{\lambda} \left ( \left \| SF \right \|_Y \right )
   \le \E \overline{\Psi} \left ( \left \| TG \right \|_Z \right )$ \\  \hline 
${\rm card}(\Delta)=1$ & $\xi^p$ & $C^p \xi^p$ & $\| S F\|_{\calL^p(\P;Y)}  \le C \| T G\|_{\calL^p(\P;Z)}$ \\ 
&&&\\
${\rm card}(\Delta)=1$ & $1_{\{\xi > \mu\}}, \mu \ge 0$ & $C^p \xi^p$ & $\P (\| SF\|_Y > \mu) \le C^p \E \| TG\|_Z^p$ \\ 
&&&\\
$(0,\infty)$ &  $\lambda^p 1_{\{\xi > \lambda\}}$ & $C^p \xi^p$ & 
$\| SF\|_{\calL^{p,\infty}(\P;Y)} \le C \E \| T G\|_{\calL^p(\P;Z)}$ \\ 
\hline
\end{tabular}
\end{center}
\bigskip

\underline{Section \ref{sec:char_Pext_Rad}}:
Theorem ~\ref{thm:char_p-ext_Rademacher} provides a characterization 
of the measures $\calP_{p\textnormal{-ext}}^{\textnormal{Rad}}$ where 
$\calP^{\textnormal{Rad}} := \{ \frac12 \delta_{-\xi} + \frac12 \delta_{\xi}:\xi\in\R \}$.
\medskip

\underline{Section \ref{sec:stochInt}}: 
We study upper decoupling inequalities for dyadic martingales and equivalent properties.
More specifically, given $p\in (0,\infty)$ and a bounded and linear 
operator $T:X\to Y$ between Banach spaces $X$ and $Y$, where $X$ is separable,  we let
$D_p(T) := \inf c$, where the infimum is taken over all $c \in [0,\infty]$ such that 
      \[ \left \| \sum_{n=1}^N r_n T v_{n-1} \right \|_{\calL^{p}(\P;Y)} 
         \le c  
         \left \| \sum_{n=1}^N r_n' v_{n-1}  \right \|_{\calL^{p}(\P;X)} \] 
    for all $N\ge 2$,   for all $v_0\in X$ and $v_n:=h_n(r_1,\ldots,r_n)$ with 
      $h_n\colon \{-1,1\}^{n} \rightarrow X$, $n\in \{1,\ldots,N-1\}$,
where $(r_n)_{n=1}^N$ is a Rademacher sequence (the $r_n$ are independent and take the values $-1$ and $1$ with
probability $1/2$) and $(r_n')_{n=1}^N$ is an independent copy of $(r_n)_{n=1}^N$.
Using Theorem~\ref{thm:simple_decoupling_of_vs_full_decoupling_intro}
we prove the following (see Theorem~\ref{thm:stochastic_integration} below):

\begin{ttheorem}
\label{thm:stochastic_integration_intro} 
For a separable Banach space $X$ and $p,q\in (0,\infty)$ the following assertions are equivalent:
\begin{enumerate}
\item \label{label:1:thm:stochastic_integration_intro} 
      $D_p(I_X) < \infty$.
\item \label{label:2:thm:stochastic_integration_intro}
      There exists a $c\in (0,\infty)$ such that for 
      every stochastic basis $(\Omega,\calF,\P,\F=(\calF_t)_{t\in [0,\infty)})$, 
      every $\F$-Brownian motion $W=(W(t))_{t\in [0,\infty)}$ 
      and every simple $\F$-predictable $X$-valued process
      $(H(t))_{t\in [0,\infty)}$ it holds that
      \[ \left \| \int_0^\infty H(t) dW(t) \right \|_{\calL^{q}(\P;X)} 
         \le c \left \| S(H) \right \|_{\calL^{q}(\P)} \]
      with the square function 
      $S(H)(\omega):= \| f\mapsto \int_0^\infty f(t) H(t,\omega) dt \|_{\gamma(\calL^2((0,\infty);X)}$.

\end{enumerate}
\end{ttheorem}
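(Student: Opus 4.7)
The plan is to derive (i) $\Rightarrow$ (ii) by feeding the Rademacher-level hypothesis $D_p(I_X)<\infty$ into Theorem~\ref{thm:simple_decoupling_of_vs_full_decoupling_intro}, using the characterisation of $\calP_{p\textnormal{-ext}}^{\textnormal{Rad}}$ from Theorem~\ref{thm:char_p-ext_Rademacher} to upgrade from two-point symmetric distributions to Gaussian conditional distributions, and then identifying the decoupled sum on the right-hand side with the $\gamma$-norm $S(H)$. The converse (ii) $\Rightarrow$ (i) is obtained by realising an arbitrary dyadic martingale as a stochastic integral against a Brownian motion stopped at its exit times from $[-1,1]$. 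Since $p$ and $q$ are allowed to differ, in both directions I would first appeal to the standard fact that $D_p(I_X)<\infty$ is equivalent across all $p\in(0,\infty)$ (via a good-$\lambda$ argument for the dyadic martingale sums), which lets me align exponents from the outset.

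For (i) $\Rightarrow$ (ii) the first step is to verify that every centred Gaussian law $\calN(0,\sigma^2)$ on $\R$ lies in $\calP_{q\textnormal{-ext}}^{\textnormal{Rad}}$: the classical central limit theorem, combined with uniform $L^q$-integrability of scaled Rademacher sums, exhibits it as a weak*-limit of convolutions of symmetric two-point measures. For a simple $\F$-predictable $H=\sum_{n=1}^{N}H_{n-1}\,\one_{(t_{n-1},t_n]}$, the martingale difference $d_n := H_{n-1}(W(t_n)-W(t_{n-1}))$ has conditional law equal to the pushforward of $\calN(0,t_n-t_{n-1})$ under $\xi\mapsto \xi H_{n-1}$, and therefore $(d_n)_{n=1}^N\in\adapt_q(\Omega,\F;X,\calP_{q\textnormal{-ext}}^{\textnormal{Rad}})$. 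Invoking Theorem~\ref{thm:simple_decoupling_of_vs_full_decoupling_intro} with $S=T=I_X$ and $\underline{\Psi}(\xi)=\overline{\Psi}(\xi)=\xi^q$ yields an upper decoupling inequality valid in the enlarged filtration $\F$. The crucial structural observation is that the decoupled tangent sequence $(e_n)_{n=1}^{N}$ is, conditionally on $\calG:=\sigma(H_0,\ldots,H_{N-1})$, a family of independent centred Gaussians with $e_n\stackrel{d}{=}\sigma_n H_{n-1}$ for independent $\sigma_n\sim\calN(0,t_n-t_{n-1})$. Hence $\sum_{n=1}^{N}e_n$ is conditionally on $\calG$ a centred Gaussian in $X$ whose covariance operator matches that of $f\mapsto \int_0^\infty f(t)H(t)\,dt$, and the Kahane--Khintchine inequality for $X$-valued Gaussians identifies its conditional $\calL^q(\P;X)$-norm with $S(H)$ up to a constant depending only on $q$; integrating out $\calG$ finishes the direction.

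For (ii) $\Rightarrow$ (i), let $W$ be a Brownian motion and define $\tau_0:=0$, $\tau_n := \inf\{t>\tau_{n-1} : |W(t)-W(\tau_{n-1})|=1\}$, so that $\tilde r_n := W(\tau_n)-W(\tau_{n-1})$ is an i.i.d.\ Rademacher sequence adapted to $\calF^W$. Setting $v_{n-1}:=h_{n-1}(\tilde r_1,\ldots,\tilde r_{n-1})$ and $H(t):=v_{n-1}\,\one_{(\tau_{n-1},\tau_n]}(t)$ produces a simple $\F^W$-predictable process with $\int_0^\infty H\,dW = \sum_{n=1}^{N}\tilde r_n v_{n-1}$. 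Applying (ii) bounds the $\calL^q$-norm of this sum by $\|S(H)\|_{\calL^q}$; conditioning on the exit times $\tau_n-\tau_{n-1}$ (which are independent of the signs $\tilde r_n$) reduces $S(H)$ to a weighted Gaussian sum $\|\sum_n \sqrt{\tau_n-\tau_{n-1}}\,\gamma_n v_{n-1}\|_{\calL^q}$, and the standard Gaussian-to-Rademacher comparison together with moment bounds on the exit times $\tau_n-\tau_{n-1}$ turns this into a constant multiple of $\|\sum_n r'_n v_{n-1}\|_{\calL^p}$, with $r'_n$ an independent Rademacher copy of $\tilde r_n$. This gives $D_p(I_X)<\infty$.

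The most delicate step is the identification in the forward direction of the decoupled tangent side with the $\gamma$-norm $S(H)$, in tandem with the enlargement-of-filtration issue: one must ensure that the conditional Gaussian structure of $(e_n)$ is preserved when decoupling takes place in a filtration strictly larger than the natural filtration of $W$. The enlargement-of-filtration clause of Theorem~\ref{thm:simple_decoupling_of_vs_full_decoupling_intro}, which ultimately rests on the factorisation result of Section~\ref{sec:factorization}, is precisely what makes this step rigorous and is in my view the core of the whole argument.
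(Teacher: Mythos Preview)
Your forward direction (i) $\Rightarrow$ (ii) is essentially the paper's argument: extrapolate to the right exponent, use Theorem~\ref{thm:char_p-ext_Rademacher} to place the Gaussian law in $\calP_{q\textnormal{-ext}}^{\textnormal{Rad}}$, apply Theorem~\ref{thm:simple_decoupling_of_vs_full_decoupling_intro} to the Brownian increments $d_n=(W(t_n)-W(t_{n-1}))H_{n-1}$ in the enlarged filtration, and identify the decoupled side with the $\gamma$-norm via Kahane--Khintchine. That part is fine.

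The converse (ii) $\Rightarrow$ (i) has a real gap. After your stopping-time construction you arrive at
\[
   \Big\|\sum_{n=1}^N \tilde r_n v_{n-1}\Big\|_{\calL^q}
   \le c\,\Big\|\sum_{n=1}^N \sqrt{T_n}\,\gamma_n v_{n-1}\Big\|_{\calL^q},
   \qquad T_n:=\tau_n-\tau_{n-1},
\]
and you then invoke a ``standard Gaussian-to-Rademacher comparison'' to bound the right-hand side by $\|\sum_n r'_n v_{n-1}\|_{\calL^q}$. But that comparison --- domination of a Gaussian (or Gaussian-mixture) sum by a Rademacher sum --- is \emph{not} available in an arbitrary Banach space; it is equivalent to $X$ having finite cotype. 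In $c_0$, for instance, the Gaussian side is larger by a factor $\sqrt{\log N}$, so your inequality would fail. Nothing in your argument has established finite cotype of $X$, so the last step does not go through. The paper closes exactly this gap: it first uses Lemma~\ref{lemma:upper_decoupling_finite_cotype} to show that $W_p(X)<\infty$ forces $X$ to have finite cotype (otherwise $\ell^\infty_{2^N}$ embeds uniformly and the inequality blows up), and only then appeals to the Gaussian-to-Rademacher comparison of \cite[Proposition~9.14]{Ledoux:Talagrand:1991} via \cite[Theorem~2.2]{Veraar:2007}.

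A secondary issue: your process $H(t)=\sum_n v_{n-1}\one_{(\tau_{n-1},\tau_n]}(t)$ is predictable but not \emph{simple} predictable in the sense used here (the definition requires deterministic partition points $t_0<\cdots<t_N$). So assertion (ii) does not apply to it directly; you would need an approximation/extension argument first. This is fixable, but the finite-cotype gap above is the essential missing idea.
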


The relation between decoupling and stochastic integration has already been studied in 
e.g.\ \cite{CoxVeraar:2011, Garling:1986, vanNeervenVeraarWeis:2007}.
Our contribution is that we take dyadic decoupling as a starting point and allow a progressive enlargement of the filtration
in $\eqref{label:1:thm:stochastic_integration_intro} \Rightarrow \eqref{label:2:thm:stochastic_integration_intro}$.
Therefore, our result is an extension of both~\cite{CoxVeraar:2011} and~\cite{Garling:1986}:
in \cite{CoxVeraar:2011} assertion
\eqref{label:2:thm:stochastic_integration_intro} was proved under the (so far more restrictive) assumption that 
one has a general upper decoupling. On the other hand, it seems that
the argument provided by Garling~\cite{Garling:1986} requires
the integrands to be adapted with respect to the filtration \emph{generated} by the Brownian motion 
they are integrated against. 
\smallskip

\underline{Section \ref{sec:decoupling_in_c0}}:
Here we discuss the behavior of the upper decoupling constant for dyadic martingales in the space $\ell_K^\infty$
and prove the following result:
\begin{ttheorem}
\label{thm:upper_decoupling_c0_intro}
For the diagonal operator
\[ D_{\sqrt{\log}}: c_0 \to c_0 
   \sptext{1}{with}{1}
   D_{\sqrt{\log}} \left ( (\xi_k)_{k=1}^\infty\right ) := \left ( \frac{\xi_k}{\sqrt{1+\log(k)}} \right ) \]
we have $D_2 \left ( D_{\sqrt{\log}} \right ) < \infty$.
\end{ttheorem}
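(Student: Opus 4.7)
The plan is to apply Theorem~\ref{thm:simple_decoupling_of_vs_full_decoupling_intro} with $X=Y=Z=c_0$, $S=T=D_{\sqrt{\log}}$, $p=2$, $\underline\Psi_\lambda(\xi)=\overline\Psi(\xi)=\xi^2$, and $\calP=\calP^{\textnormal{Rad}}=\{\tfrac12\delta_{-\xi}+\tfrac12\delta_\xi:\xi\in c_0\}$. Since the conditional law of $r_n v_{n-1}$ given $\calF_{n-1}$ lies in $\calP^{\textnormal{Rad}}$, the Paley--Walsh decoupling inequality defining $D_2(D_{\sqrt{\log}})$ is equivalent to the simple decoupling statement (ii) of that theorem. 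Writing $M_N(k):=\sum_{n}r_n v_{n-1}(k)$, $\widetilde M_N(k):=\sum_{n}r_n'v_{n-1}(k)$, and $S_k^2:=\sum_{n}v_{n-1}(k)^2$, the task reduces to the scalar comparison
\[
\E\sup_k\frac{M_N(k)^2}{1+\log k}\;\le\;C\,\E\sup_k\widetilde M_N(k)^2.
\]

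The lower bound on the right-hand side is immediate: conditioning on the $\sigma$-algebra $\calG$ that fixes the $v$'s and using that $(r_n')$ is independent of $\calG$ with $\E[(r_n')^2]=1$ gives $\E[\widetilde M_N(k)^2\mid\calG]=S_k^2$, so by conditional Jensen $\E\sup_k\widetilde M_N(k)^2\ge\E\sup_k S_k^2$. For the left-hand side, the key input is a coordinate-wise sub-Gaussian tail: from $\cosh(x)\le e^{x^2/2}$, iterated conditioning shows that $\exp(\lambda M_n(k)-\tfrac12\lambda^2\sum_{m\le n}v_{m-1}(k)^2)$ is a non-negative supermartingale, and Chernoff's inequality applied to this supermartingale yields the Freedman-type estimate
\[
\P\bigl(|M_N(k)|>t,\;S_k^2\le v\bigr)\le 2\,e^{-t^2/(2v)}\qquad(t,v>0).
\]
Summing over $k\in\N$ with the weight $1/\sqrt{1+\log k}$ and using $\sum_{k\ge 1}k^{-\alpha}\le\zeta(2)$ for $\alpha\ge 2$ then gives, for $u\ge 4v$,
\[
\P\Bigl(\sup_k\frac{M_N(k)^2}{1+\log k}>u,\;\sup_k S_k^2\le v\Bigr)\le 2\zeta(2)\,e^{-u/(2v)},
\]
and integration in $u$ yields the localized estimate
\[
\E\Bigl[\sup_k\frac{M_N(k)^2}{1+\log k}\cdot\mathbf{1}_{\{\sup_k S_k^2\le v\}}\Bigr]\le C_1\,v
\]
for an absolute constant $C_1$ and every deterministic $v>0$. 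This is the step where the weight $1/\sqrt{1+\log k}$ does its decisive work, converting the divergent sum $\sum_k 1/(1+\log k)$ into the convergent $\sum_k k^{-u/(2v)}$ once $u/(2v)>1$.

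The hard part is to promote the deterministic-$v$ localized bound to the absolute bound $\E\sup_k M_N(k)^2/(1+\log k)\le C'\,\E\sup_k S_k^2$ in which $\sup_k S_k^2$ is random. I would implement this by a good-$\lambda$ iteration in the spirit of Burkholder: writing $W:=\sup_k|M_N(k)|/\sqrt{1+\log k}$ and $\sigma:=\sup_k S_k$, the tail from the previous paragraph reads $\P(W>\beta\lambda,\,\sigma\le\delta\lambda)\le\epsilon(\beta,\delta)$, where $\epsilon(\beta,\delta)$ decays super-exponentially as $\beta^2/\delta^2\to\infty$. Letting $\beta/\delta$ grow with $\lambda$ (so that $\epsilon(\lambda)$ decays faster than any polynomial in $\lambda$) and controlling the complementary event via Chebyshev, $\P(\sigma>\delta\lambda)\le\E\sigma^2/(\delta\lambda)^2$, closes the integral $\E W^2=\int_0^\infty 2\lambda\,\P(W>\lambda)\,d\lambda$ and yields $\E W^2\le C'\E\sigma^2$. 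Combined with the lower bound $\E\sup_k\widetilde M_N(k)^2\ge\E\sup_k S_k^2$ this gives $D_2(D_{\sqrt{\log}})\le\sqrt{C'}<\infty$.
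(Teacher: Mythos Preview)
Your reduction to the scalar inequality is correct (minor point: in the invocation of Theorem~\ref{thm:simple_decoupling_of_vs_full_decoupling_intro} you want $S=D_{\sqrt{\log}}$ and $T=I_{c_0}$, not $S=T$), and the sub-Gaussian tail together with the summation over $k$ is exactly right---it is the same Azuma--Hoeffding input the paper uses. The gap is entirely in the last paragraph.

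Your bound reads $\P(W>\beta\lambda,\,\sigma\le\delta\lambda)\le\varepsilon(\beta,\delta)$ with no factor $\P(W^*>\lambda)$ on the right. Without that localizing factor the good-$\lambda$ machinery does not close, and ``letting $\beta/\delta$ grow with $\lambda$'' cannot repair this. Concretely, writing $\mu:=\delta\lambda$ one has for every $\mu\le\lambda/2$
\[
\P(W>\lambda)\le 2\zeta(2)\,e^{-\lambda^2/(2\mu^2)}+\P(\sigma>\mu),
\]
and to integrate $2\lambda$ times the first term one needs $\mu=\mu(\lambda)\to 0$ (e.g.\ $\mu(\lambda)\sim\lambda/\sqrt{c\log\lambda}$), but then $\int 2\lambda\,\P(\sigma>\mu(\lambda))\,d\lambda$ is only controlled by $\E[\sigma^2\log_+\sigma^2]$, not by $\E\sigma^2$. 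Equivalently, your localized estimate $\E[W^2 1_{\{\sigma^2\le v\}}]\le C_1 v$ does not imply $\E W^2\le C\,\E\sigma^2$: a pair $(W,\sigma)$ with $\P(\sigma=1)=1-M^{-2}$, $\P(\sigma=M)=M^{-2}$ and $W=\sigma$ on $\{\sigma=1\}$, $W=M^2$ on $\{\sigma=M\}$ satisfies the localized estimate with $C_1\approx 1$ while $\E W^2/\E\sigma^2\approx M^2/2$. The dyadic structure rules such pairs out, but your argument does not use that structure at this step.

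What is missing is a genuine conditional bound. If one introduces the stopping time $\tau=\inf\{n:\sup_k|M_n(k)|/\sqrt{1+\log k}>\lambda\}$ and applies the Freedman tail to the post-$\tau$ martingale conditionally on $\calF_\tau$ (using that the coordinate square functions are predictable and bounded by $\sigma^2$), one obtains the correct good-$\lambda$ inequality
\[
\P\big(W^*>\beta\lambda,\ \sigma\le\delta\lambda\big)\ \le\ c\,e^{-(\beta-1-\delta)^2/(2\delta^2)}\,\P(W^*>\lambda),
\]
and then the standard integration gives $\E(W^*)^2\le C\,\E\sigma^2$. This conditional/stopping-time step is precisely what the paper outsources to the $\bmo_{\psi_2}$ framework and the extrapolation theorem of \cite{Geiss:1997}: the bound $\|(M_n^{(k)})\|_{\bmo_{\psi_2}}\le c_1\|S_2(M^{(k)})\|_{L^\infty}$ encodes the conditional sub-Gaussian tails, \cite[Theorem~1.5]{Geiss:1997} pushes the weighted supremum inside $\bmo_{\psi_2}$, and \cite[Theorem~1.7]{Geiss:1997} is the packaged good-$\lambda$ that converts this into the $L^2$ bound you want. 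Your argument recovers the first two ingredients but not the third.
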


\underline{Section \ref{sec:chaos}}:
For the decoupling with respect to dyadic martingales as 
considered in Sections \ref{sec:stochInt} and \ref{sec:decoupling_in_c0} we show in 
Theorem \ref{theorem:equivalence_chaos} that it is also natural to allow an upper decoupling with 
respect to a different distribution and that a Banach space $X$ allows 
for decoupling with respect to a distribution in some fixed chaos obtained from  $\calP_{2-{\rm ext}}^{\rm Rad}$ 
if and only if $D_2(I_X)<\infty$.
\medskip

In \underline{Section \ref{sec:relations_problems}} we compare the decoupling constants in our results to related decoupling
constants and state some open problems.
\underline{Appendix \ref{sec:main_proof}} contains the proof of Theorem~\ref{thm:simple_decoupling_of_vs_full_decoupling}.
For the reader's convenience,  \underline{Appendix \ref{sec:extrapolation}} contains a classical extrapolation result, 
Proposition~\ref{prop:extrapolation_decoupling_dyadic_martingales}, which is provided in the exact form that it is needed 
in Section~\ref{sec:stochInt}.


\section{Preliminaries}


\subsection{Some general notation}
\label{ss:notation}

We let $\N = \{1, 2,\ldots\}$ and $\N_0 = \{0,1,2,\ldots\}$.
For a vector space $V$ and $B\subseteq V$ we set $ -B := \{ x\in V \colon -x\in B \}.$
Given a non-empty set $\Omega$, we let $2^{\Omega}$ denote the system of all subsets of $\Omega$
and use $A \triangle B := (A\setminus B)\cup (B\setminus A)$ for $A,B\in 2^{\Omega}$.
A system of pair-wise disjoint subsets $(A_i)_{i\in I}$ of $\Omega$ is a \emph{partition} of $\Omega$,
where $I$ is an arbitrary index-set and $A_i = \emptyset$ is allowed, if $\bigcup_{i\in I} A_i = \Omega$.
If $(M,d)$ is a metric space we define
$d\colon M \times 2^M \rightarrow [0,\infty]$ by setting
$d(x,A) := \inf\{ d(x,y) \colon y\in A \}$ for all $(x,A)\in M\times 2^{M}$.
If $V$ is a Banach space and  $(M,d)$ a metric space, then $C(M;V)$ is the space of continuous maps 
from $M$ to $V$, and $C_b(M;V)$ the subspace of bounded continuous maps from $M$ to $V$. 
\medskip

{\sc Banach space valued random variables:} 
      Throughout this paper we let $X$ be a separable Banach space (not identically to $\{0\}$ to avoid 
      pathologies) and equip $X$ with the Borel 
      $\sigma$-algebra $\calB(X)$ generated by the norm-open sets. For $x\in X$ and $\eps>0$, the 
      corresponding open balls are given by $B_{x,\eps} := \{ y \in X \colon \| x - y \|_X < \eps \}$.
      For $B\in \calB(X)$ we let 
      $\bar{B}$ denote the closure of $B$ (with respect to $\left\| \cdot \right\|_{X}$),
      we let $B^{o}$ denote the largest open set contained in $B$, and we let $\partial B$
      denote the set $\bar{B}\setminus B^{o}$.
      Given a probability space $(\Omega,\calF,\P)$ and a measurable space $(S,\Sigma)$, 
      an $\calF/\Sigma$-measurable mapping $\xi\colon \Omega \rightarrow S$ 
      is called an \emph{$S$-valued random variable}. Therefore an
      $X$-valued random variable is an $\calF/\calB(X)$-measurable mapping. For a random variable $\xi:\Omega\to S$ the law
      of $\xi$ is given by $\calL(\xi)(A) := \P(\xi\in A)$ for $A\in \Sigma$.
      If $\xi_i$ are $S$-valued random variables
      on probability spaces $(\Omega_i,\calF_i,\calP_i)$, $i=1,2$,
      then $\xi_1$ and $\xi_2$ are 
      \emph{identical in law} if $\calL(\xi_1)= \calL(\xi_2)$.
      \medskip 
      
{\sc Lebesgue spaces:} For $X$ a separable Banach space and $(S,\Sigma)$ a measurable space, we define
      $\calL^0((S,\Sigma);X)$ to be the 
      space of $\Sigma/\calB(X)$-measurable mappings from $S$ to $X$. If $(S,\Sigma)$ is equipped with a $\sigma$-finite
      measure $\mu$ and $p\in (0,\infty)$, then we define 
       \[ \calL^p((S,\Sigma,\mu);X) 
	:= 
	   \left \{ \xi \in \calL^0((S,\Sigma,\mu);X)
	   \colon \| \xi \|_{\calL^p((S,\Sigma,\mu);X)}^p := \int_{S} \| \xi \|_X^p \,d\mu < \infty \right \}. 
       \]
      If there is no risk of confusion we write for example  
          $\calL^p(\mu;X)$ or $\calL^p(\Sigma;X)$ as shorthand notation for $\calL^p((S,\Sigma,\mu);X)$,
       and we set $\calL^p((S,\Sigma,\mu)):= \calL^p((S,\Sigma,\mu);\R)$.
\medskip
\pagebreak

{\sc Probability measures on Banach spaces:}
\renewcommand{\theenumi}{\arabic{enumi}}
\begin{enumerate}
\item By $\mathcal{P}(X)$ we denote the set of all probability 
      measures on $(X,\calB(X))$ and 
      for $p\in (0,\infty)$ we let
      \[ 
          \calP_p(X) := \left \{ 
                       \mu \in \calP(X) \colon \int_{X} \| x \|_X^p \,d\mu(x) < \infty
                        \right \}.
      \]
\item Given an index set $I\not = \emptyset$,  a family 
      $(\mu_i)_{i\in I} \subseteq \calP_p(X)$ is \emph{uniformly $\calL^p$-inte\-grable} if
      \[ \lim_{K\to\infty} \sup_{i\in I} \int_{\{ \|x\|_X \ge K\}}\ \|x\|_X^p d\mu_i (x) = 0. \]
      Accordingly, a family of $X$-valued random variables $(\xi_i)_{i\in I}$
      is \emph{uniformly $\calL^p$-integrable} if $(\calL(\xi_i))_{i\in I}$  is uniformly 
      $\calL^p$-integrable.
\item For $\mu \in \calP(X)$ and $\mu_n \in \calP(X)$, $n\in \N$,
      we write $\mu_n \stackrel{w^*}{\rightarrow} \mu$ as $n\rightarrow \infty$ if $\mu_n$
      converges weakly to $\mu$, i.e. if
      $\lim_{n\rightarrow \infty} \int_X f(x) \,d\mu_n (x) = \int_X f(x) \,d\mu(x)$ 
      for all $f \in C_b(X;\R)$. 
      Moreover, for a sequence of $X$-valued random variables $(\xi_n)_{n\in \N}$ 
      and an $X$-valued random variable $\xi$
      (possibly defined on different probability spaces)
      we write $\xi_n \stackrel{w^*}{\rightarrow} \xi$ as $n\rightarrow \infty$ 
      provided that
      $\calL(\xi_n) \stackrel{w^*}{\rightarrow} \calL(\xi)$ as $n\rightarrow \infty$.
\end{enumerate}
One main aspect of this article will be to work with a non-empty subset $\calP\subseteq \calP(X)$
instead of with the full set of measures $\calP(X)$ (see Theorem \ref{thm:simple_decoupling_of_vs_full_decoupling}
below). Here we will mostly assume that $\delta_0\in \calP$ with $\delta_0(B) = \one_{\{0\in B\}}$ being the Dirac measure
at $0\in X$. This is done to consider finitely supported sequences of $X$-valued random variables $(d_n)_{n\in \N}$ in 
a convenient way, but sometimes also for convenience within the proofs.\par 
\smallskip

We shall frequently use the following well-known result, which relates $\calL^p$-uniform integrability and convergence of moments:
\begin{lemma}\label{lem:conv_p_moments_vs_LpUI}
Let $p\in (0,\infty)$,
let $X$ be a separable Banach space, and let $\mu$, $(\mu_n)_{n\in\N}$ be a sequence in $\calP_p(X)$
such that $\mu_n\stackrel{w^*}{\rightarrow} \mu$. Then the following are equivalent:
\begin{enumerate}
 \item $\int_{X} \| x \|^{p}\,d\mu_n \rightarrow \int_{X} \| x \|^{p}\,d\mu_n$.
 \item $(\mu_n)_{n\in\N}$ is uniformly $\calL^p$-integrable.
\end{enumerate}
\end{lemma}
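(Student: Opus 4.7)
The plan is to prove both implications through a common smooth truncation of $\|x\|^p$. For each $K>0$ let $\varphi_K\colon[0,\infty)\to[0,1]$ be continuous with $\varphi_K\equiv 1$ on $[0,K]$, $\varphi_K\equiv 0$ on $[2K,\infty)$, and linear on $[K,2K]$. Then $x\mapsto \|x\|^p \varphi_K(\|x\|^p)$ lies in $C_b(X;\R)$, so weak convergence immediately yields
\[
\int_X \|x\|^p \varphi_K(\|x\|^p)\,d\mu_n
\longrightarrow
\int_X \|x\|^p \varphi_K(\|x\|^p)\,d\mu
\qquad\text{as } n\to\infty, \text{ for every } K>0.
\]

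For (ii)$\Rightarrow$(i), I would split
$\int\|x\|^p\,d\mu_n = \int\|x\|^p\varphi_K(\|x\|^p)\,d\mu_n + \int\|x\|^p(1-\varphi_K(\|x\|^p))\,d\mu_n$
and decompose the $\mu$-integral analogously. The first summands converge by the display above, while the second ones are dominated by the tails $\int_{\{\|x\|^p>K\}}\|x\|^p\,d\mu_n$ and $\int_{\{\|x\|^p>K\}}\|x\|^p\,d\mu$, respectively. By hypothesis these are uniformly small in $n$ as $K\to\infty$, and since $\mu\in\calP_p(X)$ the $\mu$-tail also vanishes as $K\to\infty$. A standard three-$\eps$ argument yields (i).

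For (i)$\Rightarrow$(ii), I would subtract the displayed convergence above from the assumed convergence $\int\|x\|^p\,d\mu_n\to\int\|x\|^p\,d\mu$ to obtain, for each fixed $K>0$,
\[
\int_X \|x\|^p\bigl(1-\varphi_K(\|x\|^p)\bigr)\,d\mu_n
\longrightarrow
\int_X \|x\|^p\bigl(1-\varphi_K(\|x\|^p)\bigr)\,d\mu.
\]
Given $\eps>0$, choose $K$ so large that $\int \|x\|^p(1-\varphi_K(\|x\|^p))\,d\mu<\eps/2$, which is possible since $\mu\in\calP_p(X)$. Then there exists $N\in\N$ such that the same $\mu_n$-integral is below $\eps$ for all $n\ge N$, and because $1-\varphi_K(t)\ge \one_{\{t\ge 2K\}}$ this dominates $\int_{\{\|x\|^p\ge 2K\}}\|x\|^p\,d\mu_n$. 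The finitely many $n<N$ are absorbed by individually enlarging the truncation level, using that each $\mu_n\in\calP_p(X)$.

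The substantive point, rather than a genuine obstacle, is the need to replace the indicator $\one_{\{\|x\|^p>L\}}$ that appears in the definition of uniform $\calL^p$-integrability by a $C_b(X;\R)$-approximation: the set $\{\|x\|^p>L\}$ is in general not a $\mu$-continuity set, so the portmanteau theorem does not apply to its indicator directly, and the linear cutoff $\varphi_K$ is the single device that sidesteps this issue and drives both directions.
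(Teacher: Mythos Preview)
Your proof is correct. Both implications go through exactly as you describe: the function $x\mapsto\|x\|^p\varphi_K(\|x\|^p)$ is indeed bounded (by $2K$) and continuous, so weak convergence applies to it, and the rest is the standard three-$\eps$ splitting in one direction and a tail-comparison in the other.

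The paper takes a different and much shorter route: it simply observes that the whole statement reduces to the corresponding real-valued result by setting $\xi_n:=\|\zeta_n\|_X^p$ with $\calL(\zeta_n)=\mu_n$, and then cites Kallenberg's Lemma~4.11 (the standard equivalence between convergence of first moments and uniform integrability under weak convergence of nonnegative real random variables). So the paper outsources the analytic work to a reference, while you carry out a self-contained truncation argument directly on $X$. Your approach buys independence from external sources and makes the mechanism transparent; the paper's approach buys brevity and highlights that nothing Banach-space-specific is happening---everything factors through the scalar map $x\mapsto\|x\|^p$.
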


\begin{proof}
Apply e.g.~\cite[Lemma 4.11 (in (5) $\limsup$ can be replaced by $\sup$)]{Kallenberg:2002} to the random variables $\xi,\xi_1,\xi_2,\ldots$ 
where $\xi=\| \zeta \|_X^p$ and $\xi_n = \| \zeta_n \|_X^p$, and where $\calL(\zeta) = \mu$ and $\calL(\zeta_n)=\mu_n$.
\end{proof}

\renewcommand{\theenumi}{\roman{enumi}}
\medskip

{\sc Stochastic basis:} 
We use the notion of a stochastic basis $(\Omega,\calF,\P,\F)$, which is  
a probability space $(\Omega,\calF,\P)$ equipped with a filtration  $\F=(\calF_n)_{n\in \N_0}$, 
$\calF_0 \subseteq \calF_1 \subseteq \cdots \subseteq \calF$, and where we set 
$\calF_\infty := \sigma\left(\bigcup_{n\in \N_0} \calF_n\right)$. For measurable spaces $(\Omega,\calF)$ and $(S,\calS)$, and 
$\xi=(\xi_n)_{n\in \N}$ a sequence of $S$-valued random variables 
on $(\Omega,\calF)$, we let $\F^{\xi}=(\calF^{\xi}_n)_{n\in \N_0}$
denote the natural filtration generated by $\xi$, i.e.  
$\calF_0^\xi := \{\emptyset,\Omega\}$ and
$\calF_n^\xi := \sigma(\xi_1,\ldots,\xi_n)$ for $n \in \N$,
and  $\calF_\infty^\xi := \sigma(\xi_n : n\in \N)$.
\medskip

{\sc Convention:} If there is no risk of confusion, given an index set $I\neq \emptyset$ and 
a family $(a_i)_{i\in I}$ of random variables or elements of some Banach space, we say that 
      this family is finitely supported if only finitely many of the $a_i$ are not zero.


\subsection{Stochastic kernels} We provide some details for regular versions of conditional probabilities we shall use later.

\begin{definition}
Let $X$ be a separable Banach space. 
Given a measurable space $(S,\Sigma)$, a map  $\kappa: S\to \calP(X)$ is called a \emph{kernel} 
if it is $\Sigma/\calB(\calP(X))$-measurable,
where
\[ 
  \calB(\calP(X)) 
  :=
  \sigma\left(
    \left\{ 
      \{ \mu \in \calP(X) \colon \mu(A) \in B \}
      \colon
      A\in \calB(X), B \in \calB(\R)
    \right\}
  \right).
\]
\end{definition}

The following lemma is used later:

\begin{lemma}\label{lemma:BorelProb_countablyGen}
Let $X$ be a separable Banach space, then $\calB(\calP(X))$
is countably generated.
\end{lemma}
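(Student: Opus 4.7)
The plan is to exhibit an explicit countable family of sets generating $\calB(\calP(X))$, using separability of $X$ to get a countable $\pi$-system for $\calB(X)$ and then invoking Dynkin's lemma to transfer measurability of the evaluation maps $\mu \mapsto \mu(A)$ from this $\pi$-system to all of $\calB(X)$.

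First I would fix a countable dense set $\{x_k\}_{k\in\N}\subseteq X$ and let
\[
 \mathcal{G}_0 := \{ X \} \cup \{ B_{x_k,q} \colon k\in\N,\, q\in \Q\cap(0,\infty) \},
\]
which is a countable family generating $\calB(X)$. Passing to the collection $\mathcal{G}$ of all finite intersections of elements of $\mathcal{G}_0$ yields a countable $\pi$-system with $X\in\mathcal{G}$ and $\sigma(\mathcal{G})=\calB(X)$. Next define the countable collection
\[
 \mathcal{H} := \bigl\{ \{\mu\in\calP(X) \colon \mu(A) \le q \} \colon A\in \mathcal{G},\ q\in \Q \bigr\},
\]
and set $\calA := \sigma(\mathcal{H})$. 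By construction $\calA\subseteq \calB(\calP(X))$, so it suffices to show the reverse inclusion.

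For the reverse inclusion I would consider the system
\[
 \mathcal{D} := \{ A\in \calB(X) \colon \text{the map } \mu \mapsto \mu(A) \text{ is } \calA/\calB(\R)\text{-measurable}\}.
\]
For $A\in \mathcal{G}$ the preimages $\{\mu : \mu(A) \le q\}$ lie in $\mathcal{H}\subseteq \calA$ for every $q\in\Q$, and since $\{(-\infty,q] : q\in \Q\}$ generates $\calB(\R)$, this shows $\mathcal{G}\subseteq \mathcal{D}$. Because $\mu(X)=1$, $\mu(A^c)=1-\mu(A)$, and $\mu(\bigcup_n A_n) = \lim_n \mu(A_n)$ for increasing sequences $(A_n)$, one checks routinely that $\mathcal{D}$ is a Dynkin system. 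Dynkin's $\pi$-$\lambda$ lemma then yields $\calB(X) = \sigma(\mathcal{G}) \subseteq \mathcal{D}$, i.e.\ $\mu\mapsto \mu(A)$ is $\calA$-measurable for every $A\in\calB(X)$. Consequently $\{\mu : \mu(A)\in B\}\in \calA$ for all $A\in\calB(X)$ and $B\in\calB(\R)$, giving $\calB(\calP(X))\subseteq \calA$ and hence equality.

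There is no real obstacle here; the argument is a direct Dynkin system chase. The only point to be a little careful about is to include $X$ in the $\pi$-system so that $\mathcal{D}$ is genuinely a Dynkin system, and to replace the uncountable family of Borel sets $B\in\calB(\R)$ in the original definition by the countable subfamily $\{(-\infty,q] : q\in\Q\}$, which still generates $\calB(\R)$ and thus preserves the generated $\sigma$-algebra.
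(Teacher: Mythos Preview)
Your proof is correct and follows essentially the same approach as the paper: both exhibit a countable $\pi$-system generating $\calB(X)$, define a countable family of ``evaluation'' sets in $\calP(X)$ indexed by this $\pi$-system and rational parameters, and then use a Dynkin system argument to show that $\mu\mapsto\mu(A)$ is measurable with respect to the generated $\sigma$-algebra for every $A\in\calB(X)$. The only cosmetic difference is that the paper uses sets of the form $\{\mu:\mu(B)\in(a,b)\}$ with $a,b\in\Q$ where you use $\{\mu:\mu(A)\le q\}$ with $q\in\Q$, which is immaterial.
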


\begin{proof}
As $X$ is separable, there exists a countable $\pi$-system
$\Pi\subseteq \calB(X)$ that generates $\calB(X)$. Indeed, 
for a dense and countable subset $D\subseteq X$ 
we may choose as $\pi$-system
\[ \Pi 
  := \left \{
    \bigcap_{k=1}^{n} B_{x_k,\eps_k}
     \colon 
     n \in \N, x_k \in D, \eps_k \in (0,\infty)\cap \Q, k=1,\ldots, n
  \right \}.
\]
Define the countable system
$\calS \subseteq \calB (\calP(X))$ by
\[ \calS:= \{ \{ \mu \in \calP(X) : \mu(B) \in (a,b) \}\colon  a,b\in\Q, -\infty < a < b< \infty, B\in \Pi \}. \]
Define 
$
  \calA := \{ A  \in \calB(X) \colon \calP(X) \ni \mu \mapsto \mu(A) \in \R 
  \text{ is } \sigma(\calS)/\calB(\R)\text{-measurable} \}.
$
One may check that $\calA$ is a Dynkin system containing $\Pi$, whence 
the $\pi$-$\lambda$-Theorem implies $\sigma(\Pi) \subseteq \calA \subseteq \calB(X)$ and thus
$\calA=\sigma(\Pi)=\calB(X)$.
\end{proof}

\begin{remark}\label{rem:kernel} Let $X$ be a separable Banach space and $(S,\Sigma)$ a measurable space.
\begin{enumerate}
\item Alternatively one can say that a mapping $\kappa \colon S \times \calB(X)\to [0,1]$ 
      is a kernel if and only if the following two conditions hold:
      \begin{enumerate}
      \item For all $\omega\in S$ it holds that $\kappa[\omega,\cdot]\in \calP(X)$.
      \item For all $B\in \calB(X)$ the map $\omega \to \kappa[\omega,B]$ is $\Sigma/\calB(\R)$-measurable.
      \end{enumerate}
\item \label{item:kernel_equiv}	
      Let the space $(S,\Sigma)$ be equipped with a probability measure $\P$
      and let $\Pi\subseteq \calB(X)$ be a countable 
      $\pi$-system that generates $\calB(X)$ 
      (see the proof of Lemma~\ref{lemma:BorelProb_countablyGen}).
      For two kernels 
      $\kappa, \kappa' \colon S \rightarrow \calP(X)$ the following assertions are equivalent:
\begin{enumerate}
\item $\kappa[\omega,B] = \kappa'[\omega,B]$ for $\P$-almost all $\omega\in S$, 
for all $B\in\Pi$.
\item $\kappa[\omega,\cdot] = \kappa'[\omega,\cdot]$ for $\P$-almost all $\omega \in S$.
\end{enumerate}
\end{enumerate}
\end{remark}

We need the existence of kernels describing conditional probabilities:

\begin{theorem}[{\cite[Theorem 6.3]{Kallenberg:2002}}]
\label{thm:regular_cond_prob}
Let $X$ be a separable Banach space, $(\Omega,\calF,\P)$
a probability space, $\calG\subseteq \calF$
a sub-$\sigma$-algebra, and let $\xi:\Omega\to X$ be a random 
variable. Then there is a $\calG/\calB(\calP(X))$-measurable kernel
$\kappa \colon \Omega \to \calP(X)$ satisfying
\[  \kappa[\cdot,B] = \P(\xi \in B\,|\,\calG) \mbox{ a.s.} \]
for all $B\in \calB(X)$.
If $\kappa':\Omega\to\calP(X)$ is another kernel with this property, then
$\kappa'=\kappa$ a.s.
\end{theorem}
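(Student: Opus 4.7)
The plan is to build the kernel $\kappa$ by fixing versions of conditional probabilities on a countable generating family and extending via a Carath\'eodory-type argument, with the Polish structure of $X$ doing the work needed for countable additivity.

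First, I would invoke (the proof of) Lemma~\ref{lemma:BorelProb_countablyGen}: let $\Pi\subseteq \calB(X)$ be a countable $\pi$-system generating $\calB(X)$ with $\emptyset,X\in\Pi$, and let $\calA$ be the countable algebra it generates. For each $A\in\calA$, fix a $\calG$-measurable version $p_A\colon \Omega\to [0,1]$ of $\P(\xi \in A\,|\,\calG)$. The identities $p_\emptyset=0$, $p_X=1$, monotonicity, and finite additivity $p_{A\cup B}=p_A+p_B$ for disjoint $A,B\in\calA$ each hold off a $\P$-null set, and since $\calA$ is countable the union $N_0$ of all these exceptional sets is still $\P$-null. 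Redefining every $p_A$ to equal $\one_A(0)$ on $N_0$, I get a finitely additive $[0,1]$-valued content $A\mapsto p_A(\omega)$ on $\calA$ for every $\omega\in\Omega$.

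The main obstacle, as expected, is promoting finite to countable additivity on $\calA$. Here I would use that $X$ is Polish: by inner regularity of $\calL(\xi)$ there exist compact $K_m\subseteq X$ with $\P(\xi\in K_m)\ge 1-1/m$. Each $K_m$ can be covered arbitrarily closely from outside by sets in $\calA$, which together with Fatou for conditional expectations yields a $\P$-null set $N_1$ off which $\lim_m p_{A_m}(\omega)=1$ for an appropriate decreasing sequence $A_m\in\calA$ with $A_m\downarrow K$, where $K$ is $\sigma$-compact of full $\calL(\xi)$-measure. A standard compactness argument then shows that off $N_0\cup N_1$, if $A^{(k)}\in\calA$ decreases to $\emptyset$, then $p_{A^{(k)}}(\omega)\downarrow 0$; hence $p_\cdot(\omega)$ is $\sigma$-additive on $\calA$. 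Carath\'eodory's extension theorem now produces, for each such $\omega$, a unique Borel probability measure $\kappa[\omega,\cdot]$ on $X$ extending $p_\cdot(\omega)$; on the exceptional set I set $\kappa[\omega,\cdot]:=\delta_0$.

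Measurability of $\omega\mapsto \kappa[\omega,\cdot]$ as a map into $(\calP(X),\calB(\calP(X)))$ follows from Remark~\ref{rem:kernel}: by construction $\omega\mapsto\kappa[\omega,B]$ is $\calG$-measurable for every $B\in\Pi$, and a monotone-class argument based on Lemma~\ref{lemma:BorelProb_countablyGen} extends this to all $B\in\calB(X)$. The defining relation $\kappa[\cdot,B]=\P(\xi\in B\,|\,\calG)$ a.s.\ first holds for $B\in\Pi$ by construction and then for all $B\in\calB(X)$ by a $\pi$-$\lambda$ argument (both sides are finite measures in $B$ for each fixed $\omega$ outside a common null set). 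Uniqueness is the same argument in reverse: two candidate kernels agree a.s.\ on the countable $\pi$-system $\Pi$, hence on $\calB(X)$ for a.e.\ $\omega$ by Remark~\ref{rem:kernel}(\ref{item:kernel_equiv}).

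The principal difficulty is the countable additivity step; the Polish/tightness input is what makes the whole construction work, while the remainder is bookkeeping with countably many null sets and a $\pi$-$\lambda$ transfer from $\Pi$ to $\calB(X)$.
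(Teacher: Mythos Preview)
The paper does not supply its own proof of this statement; it is quoted verbatim as \cite[Theorem~6.3]{Kallenberg:2002} and used as a black box. So there is nothing in the paper to compare your argument against directly.

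That said, your proposal is a correct and standard route to the result, though it differs from the proof Kallenberg actually gives. Kallenberg first uses that a separable Banach space is a Borel space and hence Borel-isomorphic to a Borel subset of $[0,1]$; he then constructs a conditional distribution function on the rationals, repairs it on a single null set to a genuine right-continuous nondecreasing function, and reads off the measure. Your approach stays in $X$ and builds the kernel on a countable generating algebra, using tightness of $\calL(\xi)$ to force $\sigma$-additivity. Both arguments hinge on exactly the same structural input (Polish, hence tight), just packaged differently: Kallenberg's version hides the compactness in the order structure of $[0,1]$, while yours makes it explicit.

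One point in your sketch deserves a little more care. The passage from finite to countable additivity is indeed the crux, and the way you phrase it (``$A_m\downarrow K$ with $K$ $\sigma$-compact of full measure'') is slightly garbled. What one actually needs is: take increasing compacts $K_m$ with $\P(\xi\in K_m)\uparrow 1$; by conditional monotone convergence, $p_{K_m}(\omega)\uparrow 1$ for $\P$-a.e.\ $\omega$ (after approximating each $K_m$ from outside by a decreasing sequence in $\calA$ and controlling the error). This gives tightness of the content $p_{\,\cdot\,}(\omega)$ for a.e.\ $\omega$, and tightness of a finitely additive probability content on an algebra containing a base of open sets yields continuity at $\emptyset$, hence $\sigma$-additivity. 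With that clarification the argument goes through; the measurability and uniqueness paragraphs are fine as written.
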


We refer to $\kappa$ as a \emph{regular version} of
$\P(\xi\in\cdot \,|\, \calG)$.


\subsection{Decoupling} 
We briefly recall the concept of decoupled tangent 
sequences as introduced by Kwapie{\'n} and Woyczy{\'n}ski in 
\cite{KwapienWoyczynski:1991}. For more details we refer to 
\cite{DelaPenaGine:1999,KwapienWoyczynski:1992} and the references therein.
 
\begin{definition}\label{def:decoupled_sequence}
Let $X$ be a separable Banach space, let $(\Omega,\calF,\P,(\calF_n)_{n\in \N_0})$ 
be a stochastic basis, and let $(d_n)_{n\in \N}$
be an $(\calF_n)_{n\in \N}$-adapted
sequence of $X$-valued random variables on
$(\Omega,\calF,\P)$.
A sequence of $X$-valued and $(\calF_n)_{n\in \N}$-adapted
random variables $(e_n)_{n\in \N}$ on $ (\Omega,\calF,\P)$
is called an {\em $(\calF_n)_{n\in \N_0}$-decoupled tangent sequence} of
$(d_n)_{n\in \N}$ provided there exists a $\sigma$-algebra 
$\calH \subseteq \calF$ satisfying $\sigma((d_n)_{n\in \N}) \subseteq \calH$ 
such that the following two conditions are satisfied:
\begin{enumerate}
\item {\sc Tangency:} For all $n\in \N$ and all $B\in \mathcal{B}(X)$ one has
      \[ \P(d_n \in B\,|\,\calF_{n-1})
         =
         \P(e_n \in B\,|\,\calF_{n-1})
         =
         \P(e_n\in B\,|\,\calH)
         \mbox{ a.s.} \]
\item {\sc Conditional independence:}
      For all $N\in\N$ and $B_1,\ldots,B_N\in\mathcal{B}(X)$ one has
      \[ \P(e_1\in B_1,\ldots, e_N\in B_N\,|\,\calH)
         =
         \P(e_1\in B_1\,|\,\calH)
         \cdot \ldots \cdot 
         \P(e_N\in B_N\,|\,\calH)
         \mbox{ a.s.} \]
\end{enumerate}
\end{definition}

A construction
of a decoupled tangent sequence is presented in \cite[Section 4.3]{KwapienWoyczynski:1992}.

\begin{example}\label{example:decoupled_sequence}
Let $(\Omega,\calF,\P,(\calF_n)_{n\in\N_0})$ be 
a stochastic basis, $(\varphi_n)_{n\in \N}$
and $(\varphi_n')_{n\in \N}$ 
two independent and identically distributed
sequences of independent, $\R$-valued random variables
such that $\varphi_n$ and $\varphi_n'$ are $\calF_n$-measurable 
and independent of $\calF_{n-1}$ for 
all $n\in \N$, and
let $(v_n)_{n\in \N_0}$ be an $(\calF_n)_{n\in\N_0}$-adapted sequence of $X$-valued 
random variables independent of $(\varphi_n')_{n\in \N}$.
Then $( \varphi'_n v_{n-1} )_{n\in \N}$ is an 
$(\calF_n)_{n\in\N_0}$-decoupled 
tangent sequence of 
$( \varphi_n v_{n-1} )_{n\in \N}$, where one may take
\[ \calH := \sigma( (\varphi_n)_{n\in \N}, (v_n)_{n\in \N_0}). \]
Similarly, $(\varphi_n)_{n\in \N}$ and $(\varphi_n')_{n\in \N}$
could be $X$-valued random 
variables and $(v_n)_{n\in \N_0}$ $\R$-valued. 
\end{example}


\section{A factorization for regular conditional probabilities}
\label{sec:factorization}

By Theorem \ref{thm:factor} below we contribute to the results obtained in 
Kallenberg \cite[Lemma 3.22]{Kallenberg:2002}) and Montgomery-Smith \cite{MontgomerySmith:1998}. Our contribution is that we provide
a factorization in the {\em strong sense}, not a representation in the distributional context.
Theorem \ref{thm:factor} is used to prove  Proposition~\ref{prop:simple_to_Lp} below, but might be of independent interest.
Also the usage in the proof of Proposition~\ref{prop:simple_to_Lp} yields to  a refined argument for the existence of a decoupled tangent sequence, 
so that it contributes to \cite{KwapienWoyczynski:1991} (cf. \cite[Proposition 6.1.5]{DelaPenaGine:1999}) as well.

\begin{theorem}\label{thm:factor}
Let $(\Omega,\calF,\P)$ be a probability space, $\calG\subseteq \calF$ be a $\sigma$-algebra, 
let $d\in \calL^0(\calF;\R)$ satisfy $d(\Omega)\subseteq [0,1)$, 
and let $\kappa \colon \Omega\times \calB([0,1)) \rightarrow [0,1]$
be a regular conditional probability kernel for $\calL(d|\calG)$. Let 
$(\bar{\Omega},\bar{\calF},\bar{\P}):= (\Omega \times (0,1], \calF\otimes \calB((0,1]),\P\otimes\lambda)$,
where $\lambda$ is the Lebesgue measure on $\calB((0,1])$.
Set $[0,0):=\emptyset$ and 
define $H\colon \bar{\Omega}\rightarrow [0,1]$, $d^{0}\colon \Omega\times [0,1] \rightarrow [0,1]$
by
\begin{align}
 H(\omega,s)     & := \kappa[ \omega, [0,d(\omega))] + s \kappa[ \omega, \{ d(\omega)\}],\\
 d^{0}(\omega,h) & := \inf\{ x\in [0,1] \colon \kappa[ \omega , [0,x]] \geq h \}.
\end{align}
Then
\begin{enumerate}
 \item\label{thm:factor:meas_H} $H$ is $\bar{\calF}/\calB([0,1])$-measurable, independent of $\calG\otimes \{\emptyset,(0,1]\}$, and 
 uniformly-$[0,1]$ distributed,  
 \item\label{thm:factor:meas_d} $d^0$ is $\calG\otimes \calB([0,1])/\calB([0,1])$-measurable, and 
 \item\label{thm:factor:rep} there is an $\calN\in \calF$ with $\P(\calN)=0$ such that 
      $d^{0}(\omega,H(\omega,s))= d(\omega)$ for all $(\omega,s)\in (\Omega \setminus\calN)\times (0,1]$.
\end{enumerate}
\end{theorem}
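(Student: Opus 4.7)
The overall strategy is to separate a routine measurability layer from the distributional content. The first step I would take is to establish that the ``CDF kernel'' $(\omega, x) \mapsto F(\omega, x) := \kappa[\omega, [0, x]]$ and its left-continuous counterpart $(\omega, x) \mapsto F^-(\omega, x) := \kappa[\omega, [0, x)]$ are $\calG \otimes \calB([0, 1])$-measurable. This follows via dyadic approximation: $\omega \mapsto \kappa[\omega, [0, r]]$ is $\calG$-measurable for each dyadic rational $r$ by the kernel property, and monotonicity together with right (resp.\ left) continuity in $x$ realizes $F$ (resp.\ $F^-$) as a countable limit of jointly measurable simple functions. Measurability of the atom mass $p := F - F^-$ follows, and hence so do the measurability of $H$ via its defining formula and the measurability of $d^0$ via the identity $\{d^0 > y\} = \{F(\omega, y) < h\}$, settling the measurability parts of~\eqref{thm:factor:meas_H} and~\eqref{thm:factor:meas_d}.

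Next, for the distributional part of~\eqref{thm:factor:meas_H}, the plan is to verify $\bar\P(A \cap \{H \leq t\}) = t\,\P(A)$ for every $A \in \calG$ and $t \in [0, 1]$, which simultaneously yields uniform distribution and independence from $\calG \otimes \{\emptyset, (0, 1]\}$. Fubini in $s$ expresses the left-hand side as $\int_A g(\omega, d(\omega), t)\, d\P$, where $g(\omega, x, t) := \lambda(\{s \in (0, 1] : F^-(\omega, x) + s\, p(\omega, x) \leq t\})$. Using that $\kappa$ is a regular version of $\calL(d \,|\, \calG)$, the tower property reduces matters to the pointwise identity $\int_{[0, 1)} g(\omega, x, t)\, \kappa[\omega, dx] = t$, which is the classical fact that the randomized inverse CDF is uniform on $[0, 1]$. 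I would verify this by splitting $\kappa[\omega, \cdot]$ into the three regions $\{F \leq t\}$, $\{F^- > t\}$, and the (at most one) atom with $F^- \leq t < F$, whose randomized contribution accounts for precisely the defect $t - F^-$.

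For the factorization claim~\eqref{thm:factor:rep}, a case split for fixed $\omega$ with $\mu := \kappa[\omega, \cdot]$ does most of the work. If $d(\omega)$ is an atom of $\mu$, then $H(\omega, s) \in (F^-(d), F(d)]$ for all $s \in (0, 1]$ and $F(x) \leq F^-(d) < H(\omega, s)$ for every $x < d$, so $d^0(\omega, H(\omega, s)) = d$ automatically. If $d(\omega)$ is not an atom, then $H(\omega, s) = F(d)$ is independent of $s$, so the identity is $s$-uniform and equivalent to $F(x) < F(d)$ for all $x < d$. Consequently the exceptional set $\calN := \{\omega : d^0(\omega, H(\omega, 1)) \neq d(\omega)\}$ is $\calF$-measurable and captures the failure uniformly in $s \in (0, 1]$.

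The main obstacle will be showing $\P(\calN) = 0$, which I would reduce to proving that for every Borel probability $\mu$ on $[0, 1)$ the ``left-flat non-atom'' set $B := \{d : \mu(\{d\}) = 0 \text{ and } \mu((x, d)) = 0 \text{ for some } x < d\}$ is $\mu$-null. Each $d \in B$ admits a rational $q_d < d$ with $\mu((q_d, d)) = 0$, so $B \subseteq \bigcup_{q \in \Q \cap [0, 1)} B_q$ with $B_q := \{d > q : \mu(\{d\}) = 0,\ \mu((q, d)) = 0\}$. Writing $q^\ast := \sup\{d > q : \mu((q, d)) = 0\}$, one obtains $B_q \subseteq (q, q^\ast]$ and $\mu((q, q^\ast)) = 0$; since every point of $B_q$ is a non-atom of $\mu$, it follows that $\mu(B_q) = 0$, hence $\mu(B) = 0$. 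Conditioning through $\kappa$ then yields $\P(\calN) = \E[\kappa[\cdot, B(\cdot)]] = 0$, and the joint measurability established in the first step takes care of the bookkeeping needed to turn $B(\omega)$ into a measurable object.
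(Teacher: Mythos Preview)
Your proposal is correct, and the measurability arguments for~\eqref{thm:factor:meas_H} and~\eqref{thm:factor:meas_d} are close to the paper's (the paper builds explicit dyadic approximants $H_n\to H$ and computes $\bar\P(G\cap\{H_n\in B\})$ by hand, whereas you condition through $\kappa$ and appeal to the classical randomized-CDF fact; both are fine). The genuine divergence is in~\eqref{thm:factor:rep}. You carry out a pointwise case split (atom versus non-atom of $\kappa[\omega,\cdot]$ at $d(\omega)$), identify the exceptional set as the ``left-flat non-atoms'' of $\kappa[\omega,\cdot]$, prove this is $\mu$-null for any Borel probability $\mu$, and then push this through the kernel via the disintegration $\E[g(\omega,d(\omega))]=\int\!\!\int g(\omega,y)\,\kappa[\omega,dy]\,d\P(\omega)$ for $g\in\calG\otimes\calB$. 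The paper instead avoids any pointwise analysis: it shows $\{d^0(\cdot,H(\cdot,s))\le x\}=B_x\times(0,1]\supseteq\{d\le x\}\times(0,1]=:C_x\times(0,1]$, and then uses the already-established part~\eqref{thm:factor:meas_H} to observe that the pushforward of $(\omega,s)\mapsto(\omega,H(\omega,s))$ on $\calG\otimes\calB([0,1])$ is $\P|_\calG\otimes\lambda$, which immediately gives $\P(B_x)=\E\kappa[\cdot,[0,x]]=\P(C_x)$ and hence $\P(B_x\setminus C_x)=0$; a countable union over rational $x$ finishes. Your route is more elementary and self-contained (it never reuses~\eqref{thm:factor:meas_H} as a tool), at the cost of the extra ``left-flat non-atom'' lemma and the measurability bookkeeping for the random set $B(\omega)$; the paper's route is shorter and exploits~\eqref{thm:factor:meas_H} to turn the problem into a pure measure comparison $\P(B_x)=\P(C_x)$.
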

\smallskip

\begin{proof}
\eqref{thm:factor:meas_H}:
For all $n\in \N$, $\ell\in \{1,\ldots,2^n\}$ let $A_{n,\ell} := [(\ell-1)2^{-n},\ell 2^{-n})$.
Define $H_n \colon \bar{\Omega}\rightarrow [0,1]$ by
\begin{align*}
  H_n(\omega,s) 
& := 
  \sum_{\ell=1}^{2^n} 
    1_{\{d\in A_{n,\ell}\}} (\omega) 
    \big( 
      \kappa[ \omega, [0,(\ell-1) 2^{-n})] 
      + s \kappa[ \omega, A_{n,\ell}]
    \big).
\end{align*}
Note that $H_n$ is $\bar{\calF}/\calB([0,1])$-measurable. Moreover, for all $(\omega,s)\in \bar{\Omega}$
it holds that
\begin{align*}
& | H_n(\omega,s) - H(\omega,s) |
\\ &
\leq 
\sum_{\ell=1}^{2^n} 
  1_{\{d\in A_{n,\ell}\}} (\omega) 
  (1+s) \kappa[ \omega, A_{n,\ell} \setminus\{d(\omega)\}] 
\\ & \rightarrow 0 \text{ as } n\rightarrow \infty.
\end{align*}
In other words, $H$ is the point-wise limit of $\bar{\calF}/\calB([0,1])$-measurable functions, 
in particular, $H$ is $\bar{\calF}/\calB([0,1])$-measurable.
\smallskip

We now prove that the law of $H_n$ is given by the Lebesgue measure $\lambda_{[0,1]}$ on $\calB([0,1])$ and that 
$H_n$ is independent of $\calG\otimes \{\emptyset,(0,1]\}$ for all $n\in \N$. Let $n\in \N$, $G\in \calG$ and $B\in \calB([0,1])$. 
Note that for all $a,b\in [0,1]$ satisfying $a+b\leq 1$ it holds 
that $b \int_{0}^{1} 1_{\{a+sb\in B\}} \,ds = \lambda_{[0,1]}(B\cap [a,a+b])$. 
Using this and Fubini's theorem we obtain
\begin{align*}
&  \bar{P}( G \cap \{ H_n \in B \} )
\\  & =
  \sum_{\ell=1}^{2^n} 
    \int_{0}^{1} 
    \int_{G \cap \{ \omega\in \Omega \colon \kappa[ \omega, [0,(\ell-1) 2^{-n})]  + s \kappa[ \omega, A_{n,\ell}] \in B\} }
      1_{\{d\in A_{n,\ell}\}}(\omega)
    \,d\P
    \,ds
\\ & = 
  \sum_{\ell=1}^{2^n} 
    \int_{G} 
      \kappa[ \omega, A_{n,\ell} ]
      \int_{0}^{1}
	1_{\{ \omega\in \Omega \colon \kappa[ \omega, [0,(\ell-1) 2^{-n})]  + s \kappa[ \omega, A_{n,\ell}] \in B\}}(\omega)
      \,ds
    \,d\P
\\ & =
  \sum_{\ell=1}^{2^n} 
    \int_{G} 
      \lambda_{[0,1]}\big( B\cap \big[\kappa[\omega, [0, (\ell-1)2^{-n})], \kappa[ \omega, [0, \ell 2^{-n})] \big ]\big)
    \,d\P
\\ & =     \P(G)\cdot\lambda_{[0,1]}(B).
\end{align*}
This proves that $H_n$ is uniformly-$[0,1]$ distributed and independent of $\calG\otimes \{\emptyset,(0,1]\}$ 
for all $n\in \N$. This completes the proof 
of~\eqref{thm:factor:meas_H}, as $H$ is the point-wise 
limit of $(H_n)_{n\in\N}$. (Use e.g.\ that $\R$-valued random variables $\xi_1,\xi_2$ are 
independent if and only if for every two bounded continuous functions $f,g\in C(\R)$ 
it holds that $\E[f(\xi_1)g(\xi_2)] = \E[f(\xi_1)]\E[f(\xi_2)]$.)
\smallskip

\eqref{thm:factor:meas_d}: For all $x\in [0,1]$ note that 
\begin{align}\label{eq:dist_d} 
 \{ d^{0} \leq x \} 
 =
 \{ (\omega,h)\in \Omega \times [0,1] \colon \kappa[\omega, [0,x]] -h\geq 0 \}
    \in \calG\otimes \calB([0,1]). 
\end{align}
\par 
\eqref{thm:factor:rep}: 
It follows from~\eqref{eq:dist_d} and the definition of $H$
that we have, for all $x\in [0,1]$, that
\begin{align*}
& \{ (\omega,s)\in \bar{\Omega} \colon d^{0}(\omega,H(\omega,s)) \leq x \} 
\\ &=
\{ (\omega,s)\in \bar{\Omega}\colon \kappa[\omega, [0,x]] \geq \kappa[\omega,[0,d(\omega))] + s \kappa[\omega,\{d(\omega)\}] \}
\end{align*}
can be written as $B_x \times (0,1]$ \for some $B_x \in \calF$ and that we have that
\[ 
   B_x\times (0,1]
   \supseteq
   \{ (\omega,s)\in \bar{\Omega}\colon d(\omega) \leq x \} =:C_x \times (0,1].
\]
On the other hand from the fact that the image measure of the map $(\omega,s)\mapsto (\omega,H(\omega,s))$ as map a
map from $\overline{\Omega}$ into $\Omega \times [0,1]$ equals $\P\otimes \lambda$ we obtain, for all $x\in [0,1]$, that
\begin{align*}
\P(B_x) = \overline{\P}(B_x \times (0,1])
& = \E \int_{0}^{1} 1_{\{ d^{0}(\omega,h)\leq x \}} \,dh \,d\P(\omega)\\
& = \E \int_{0}^{1} 1_{\{  \kappa[\omega, [0,x]] \geq h \}} \,dh \,d\P(\omega) = \E \kappa[\cdot,[0,x]] = \P(C_x).
\end{align*}
It follows that $\P(B_x\setminus C_x)=0$ for all $x\in[0,1]$. Let $\calN := \cup_{q\in \Q\cap[0,1)} (B_q\setminus C_q)$ 
so that $\P(\calN)=0$. Then, observing that $B_x = \cap_{q\in \Q\cap[x,1)} B_q$ and $C_x = \cap_{q\in \Q\cap[x,1)} C_q$
for all $x\in [0,1)$, we have for all $(\omega,s)\in (\Omega\setminus \calN)\times (0,1]$ that 
$d^{0}(\omega,H(\omega,s)) = d(\omega)$.
\end{proof}

\begin{corollary}\label{cor:factor_through_subsigmaalgebra}
Let $(\Omega,\calF,\P)$ be a probability space, $\calG\subseteq \calF$ a $\sigma$-algebra, 
$X$ a separable Banach space, $d\in \calL^0(\calF;X)$.
Let 
$(\bar{\Omega},\bar{\calF},\bar{\P}):= (\Omega \times (0,1], \calF\otimes \calB((0,1]),\P\otimes\lambda)$,
where $\lambda$ is the Lebesgue measure on $\calB((0,1])$.
Then there exist random variables $H\colon \bar{\Omega}\rightarrow [0,1]$, $d^{0}\colon \Omega\times [0,1] \rightarrow X$
such that 
\begin{enumerate} 
 \item $H$ is a uniformly-$[0,1]$ distributed random variable independent of $\calG\otimes \{\emptyset, [0,1]\}$,
 \item $d^0$ is $\calG\otimes \calB([0,1])/\calB(X)$-measurable, and 
 \item there is an $\calN\in \calF$ with $\P(\calN)=0$ such that
       $d(\omega) = d^{0}(\omega, H(\omega,s))$ for all $(\omega,s)\in (\Omega\setminus\calN)\times (0,1]$.
\end{enumerate}
\end{corollary}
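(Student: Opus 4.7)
\emph{Plan.} The natural strategy is to reduce the Banach-space valued statement to the scalar case already handled by Theorem~\ref{thm:factor} via a Borel isomorphism. Since $X$ is a separable Banach space, $(X,\calB(X))$ is a standard Borel space, hence by Kuratowski's isomorphism theorem there exists a Borel isomorphism $\varphi\colon X\to A$ onto a Borel subset $A\subseteq [0,1)$. (If $X$ is countable one can take $\varphi$ to send $X$ into $\Q\cap[0,1)$; otherwise $A$ may be taken equal to a standard Borel-isomorphic copy of $[0,1]$ embedded in $[0,1)$.) Set $\tilde d := \varphi\circ d\colon \Omega\rightarrow [0,1)$, which is $\calF/\calB([0,1))$-measurable.

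Now I would apply Theorem~\ref{thm:factor} to $\tilde d$ and to the same sub-$\sigma$-algebra $\calG$. This yields a uniformly $[0,1]$-distributed random variable $H\colon\bar\Omega\to [0,1]$, independent of $\calG\otimes\{\emptyset,(0,1]\}$, together with a $\calG\otimes\calB([0,1])/\calB([0,1])$-measurable map $\tilde d^{0}\colon \Omega\times[0,1]\to[0,1]$ and a $\P$-null set $\calN\in\calF$ such that
\[
 \tilde d^{0}(\omega, H(\omega,s))=\tilde d(\omega)
 \qquad\text{for all } (\omega,s)\in(\Omega\setminus\calN)\times(0,1].
\]
Since $\varphi\colon X\to A$ is a Borel isomorphism, its inverse $\varphi^{-1}\colon A\to X$ is Borel measurable; extend it to a Borel map $\psi\colon[0,1]\to X$ by setting $\psi\equiv 0$ on $[0,1]\setminus A$. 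Finally, define
\[
 d^{0}(\omega,h) := \psi\!\left(\tilde d^{0}(\omega,h)\right).
\]

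It remains to verify the three conclusions. Measurability of $d^{0}$ follows from composing the $\calG\otimes\calB([0,1])/\calB([0,1])$-measurable map $\tilde d^{0}$ with the Borel map $\psi$. For the factorization property, note that for $(\omega,s)\in(\Omega\setminus\calN)\times(0,1]$ we have $\tilde d^{0}(\omega,H(\omega,s))=\tilde d(\omega)\in A$, hence
\[
 d^{0}(\omega,H(\omega,s)) = \psi(\tilde d(\omega)) = \varphi^{-1}(\varphi(d(\omega))) = d(\omega).
\]
The distribution and independence of $H$ are inherited directly from Theorem~\ref{thm:factor}\eqref{thm:factor:meas_H}.

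\emph{Main obstacle.} There is no deep difficulty here; the one place requiring care is the existence of the Borel isomorphism $\varphi$ (invocation of Kuratowski's theorem for the standard Borel space $(X,\calB(X))$, which holds because separable Banach spaces are Polish) and the cosmetic extension of $\varphi^{-1}$ from $A$ to all of $[0,1]$ so that the composition $\psi\circ\tilde d^{0}$ is defined everywhere — this is harmless since, off a null set, the values $\tilde d^{0}(\omega,H(\omega,s))$ lie in $A$.
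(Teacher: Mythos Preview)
Your proposal is correct and follows exactly the approach taken in the paper: reduce to Theorem~\ref{thm:factor} via a Borel isomorphism between the separable Banach space $X$ and (a Borel subset of) $[0,1)$, then pull the factorization back through the inverse. The paper's proof is a one-line reference to this Borel isomorphism (citing \cite[Theorem~13.1.1]{Dudley:03}); you have simply spelled out the details, including the harmless extension of $\varphi^{-1}$ to all of $[0,1]$.
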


\begin{proof}
This is an immediate consequence of Theorem~\ref{thm:factor} and the fact that that $X$ is Borel-isomorphic to $[0,1)$, see 
e.g.~\cite[Theorem 13.1.1]{Dudley:03}.
\end{proof}


\section{Reduction of general decoupling}
\label{sec:reduction_decoupling}

To formulate our main result we introduce two basic concepts: In 
Definition \ref{definition:class_of_adapted_processes} we introduce a set 
of admissible adapted 
processes characterized by an assumption on the regular versions of the - in a sense - predictable projections, and in
Definition \ref{definition:p-measure_extension} we introduce an extension of 
a given set of probability measures that is natural in our context.

\begin{definition}
\label{definition:class_of_adapted_processes}
Let $X$ be a separable Banach space, $p\in (0,\infty)$,  
$\emptyset \not =  \calP \subseteq \calP_p(X)$, and
$(\Omega,\calF,\P,(\calF_n)_{n\in \N_0})$ be a stochastic basis. We denote by
$\adapt_p(\Omega,(\calF_n)_{n\in \N_0};X,\calP)$ the set of $(\calF_n)_{n\in \N}$-adapted sequences  
$(d_n)_{n\in \N}$ in $\calL^p(\P;X)$ with the property that for 
all $n\in \N$ there exists an $\Omega_{n-1} \in \calF$ 
satisfying $\P(\Omega_{n-1})=1$ and 
$\kappa_{n-1}[\omega,\cdot]\in\calP$ for all $\omega\in \Omega_{n-1}$,
where $\kappa_{n-1}$ is a regular version of $\P(d_n\in\cdot \,|\, \calF_{n-1})$.
\end{definition}

\begin{definition}
\label{definition:p-measure_extension}
For a separable Banach space $X$, $p\in (0,\infty)$ and $\emptyset \neq \calP\subseteq \calP_p(X)$
we let 
\begin{equation*}
\begin{aligned}
\calP_{p\textnormal{-ext}}
 :=
\Big\{ 
  \mu \in \calP_p(X)
  \colon
&
  \forall j\in \N\,
  \exists\,
    K_j\in \N
    \text{ and }
    \mu_{j,1},\ldots,\mu_{j,K_j} \in \calP
\\ & 
  \text{such that }
  \mu_{j,1} 
  *
  \cdots
  *
  \mu_{j,K_j} 
  \stackrel{w^*}{\rightarrow}
  \mu
  \text{ as } 
  j\rightarrow \infty
\\ & 
  \text{and } 
  \left(
    \mu_{j,1} 
    *
    \cdots
    *
    \mu_{j,K_j} 
  \right)_{j\in \N}
  \text{ is uniformly $L^p$-integrable}
\Big\}.
\end{aligned}
\end{equation*}
\end{definition}

The following Lemma~\ref{lem:char_Ppext_new} reveals some basic properties of $\calP_{p\textnormal{-ext}}$.
To this end, for $p\in (0,\infty)$ we introduce  on $\calP_p(X)\subseteq \calP(X)$ the metric
\begin{equation}
 d_p(\mu,\nu) := d_0(\mu,\nu) + \left| \int_X \| x \|^p \,d\mu(x) - \int_X \| x \|^p \,d\nu(x) \right|
\end{equation}
where $d_0$ is a fixed metric on $\calP(X)$ that metricizes the $w^*$-convergence, see for example
\cite[Theorem II.6.2]{Parthasarathy:1967}. 

\begin{lemma}
\label{lem:char_Ppext_new}
Let $p\in (0,\infty)$ and let $\calP\subseteq \calP_p(X)$ be non-empty. 
Then 
\begin{enumerate}
\item $(\calP_{p\textnormal{-ext}})_{p\textnormal{-ext}} = \calP_{p\textnormal{-ext}}$ and
\item $\calP_{p\textnormal{-ext}}$ is the smallest $d_p$-closed set $\calQ$ with  
      $\calQ\supseteq \calP$ and $\mu * \nu \in \calQ$ for all $\mu,\nu \in \calQ$.
\end{enumerate}
\end{lemma}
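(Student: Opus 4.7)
The plan is to first establish (ii) and then deduce (i) from it by applying (ii) to $\calP_{p\textnormal{-ext}}$ in place of $\calP$. For (ii), I will verify that $\calP_{p\textnormal{-ext}}$ (a) contains $\calP$, (b) is closed under convolution, (c) is $d_p$-closed, and (d) is contained in every set with properties (a)--(c).

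Property (a) is immediate: pick $K_j=1$ and $\mu_{j,1}=\mu$ in the definition of $\calP_{p\textnormal{-ext}}$. For (b), given $\mu,\nu\in\calP_{p\textnormal{-ext}}$ with approximating convolutions $\rho_j=\mu_{j,1}*\cdots*\mu_{j,K_j}\stackrel{w^*}{\to}\mu$ and $\tau_j=\nu_{j,1}*\cdots*\nu_{j,L_j}\stackrel{w^*}{\to}\nu$, both uniformly $\calL^p$-integrable, I claim $\rho_j*\tau_j$ is an approximating family for $\mu*\nu$. Weak convergence $\rho_j*\tau_j\stackrel{w^*}{\to}\mu*\nu$ follows from the continuity of convolution with respect to $w^*$-convergence on tight families (using independent copies on a product space). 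For uniform $\calL^p$-integrability I would use the bound $\|x+y\|^p\le C_p(\|x\|^p+\|y\|^p)$ so that $\|\xi_j+\eta_j\|^p$ is dominated by the uniformly integrable family $C_p(\|\xi_j\|^p+\|\eta_j\|^p)$ (with independent $\xi_j\sim\rho_j,\eta_j\sim\tau_j$), hence is itself uniformly integrable; combined with the weak convergence this gives convergence of the $p$-th moments, and Lemma~\ref{lem:conv_p_moments_vs_LpUI} then yields the required uniform $\calL^p$-integrability of $\rho_j*\tau_j$.

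For (c), let $\mu_n\in\calP_{p\textnormal{-ext}}$ with $d_p(\mu_n,\mu)\to 0$. By Lemma~\ref{lem:conv_p_moments_vs_LpUI}, $d_p$-convergence is equivalent to $w^*$-convergence together with uniform $\calL^p$-integrability, and for each $n$ I can choose a convolution $\rho_{n,j_n}$ of elements of $\calP$ with $d_p(\rho_{n,j_n},\mu_n)<1/n$; the triangle inequality gives $d_p(\rho_{n,j_n},\mu)\to 0$, so $\mu\in\calP_{p\textnormal{-ext}}$. For (d), take any $d_p$-closed $\calQ\supseteq\calP$ stable under convolution, and $\mu\in\calP_{p\textnormal{-ext}}$ with approximants $\rho_j$ of elements of $\calP$. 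Then $\rho_j\in\calQ$ by stability, $\rho_j\to\mu$ in $d_p$ (again by Lemma~\ref{lem:conv_p_moments_vs_LpUI}), and $d_p$-closedness gives $\mu\in\calQ$.

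Assertion (i) follows by applying (ii) with $\calP$ replaced by $\calP_{p\textnormal{-ext}}$: the set $(\calP_{p\textnormal{-ext}})_{p\textnormal{-ext}}$ is the smallest $d_p$-closed, convolution-stable set containing $\calP_{p\textnormal{-ext}}$, but $\calP_{p\textnormal{-ext}}$ itself has these three properties by (b) and (c), so $(\calP_{p\textnormal{-ext}})_{p\textnormal{-ext}}\subseteq\calP_{p\textnormal{-ext}}$; the reverse inclusion is trivial. The main technical point I expect is the uniform $\calL^p$-integrability in step (b): the domination by a uniformly integrable sum is essential, and without it neither convergence of $p$-th moments of the convolution nor $d_p$-convergence would be available from Lemma~\ref{lem:conv_p_moments_vs_LpUI}.
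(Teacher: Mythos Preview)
Your argument is correct and is essentially the same as the paper's: both approaches identify $\calP_{p\textnormal{-ext}}$ with the $d_p$-closure of the set of finite convolutions from $\calP$ (via Lemma~\ref{lem:conv_p_moments_vs_LpUI}), verify that convolution is $d_p$-continuous, and deduce closure under $*$, $d_p$-closedness, and minimality. The paper packages the last three steps and the idempotency (i) into the abstract Lemma~\ref{lemma:*-hull} about an associative continuous operation on a metric space, so that the only analytic work left is proving $d_p$-continuity of convolution; you instead verify (a)--(d) and (i) by hand, which amounts to reproving Lemma~\ref{lemma:*-hull} in this special case.
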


\begin{proof}
The assertion follows from Lemma \ref{lemma:*-hull} given the convolution is continuous with respect to $d_p$.
To verify this, we let $\mu,\nu,\mu_n,\nu_n\in \calP_p(X)$, $n\in\N$, such that 
$\lim_{n\rightarrow \infty} d_p(\mu,\mu_n) = \lim_{n\rightarrow \infty} d_p(\nu,\nu_n) = 0$. 
It is know that $\mu_n*\nu_n \stackrel{w^*}{\to} \mu * \nu$ as well (one can use \cite[Theorem 4.30]{Kallenberg:2002}).
Because for $K>0$  we have  
\begin{multline*}
        2^{-\max\{0,p-1\}} \int_{\|x+y\|_X\ge K\}}\|x+y\|_X^p d \mu_n(x) d\nu_n(y) 
    \le \int_{\{ \|x\|_X \ge K/2 \}} \|x\|_X^p d\mu_n(x) \\ + \int_{\{ \|y\|_X \ge K/2 \}} \|y\|_X^p d\nu_n(x) + 
        \frac{2^{p+1}}{K^p}\int_X \|y\|_X^p d\nu_n(y) \int_X\|x\|^p_Xd\mu_n(x)
\end{multline*}
we get that $\mu_n * \nu_n$ is uniformly $\calL^p$-integrable and use Lemma \ref{lem:conv_p_moments_vs_LpUI}
to obtain the convergence of the $p$-th moments.
\end{proof}
 
Now we formulate the main result of this section, i.e., Theorem~\ref{thm:simple_decoupling_of_vs_full_decoupling} below.
The proof of this theorem can be found in Appendix~\ref{sec:main_proof}.
\smallskip

\begin{theorem}
\label{thm:simple_decoupling_of_vs_full_decoupling}
Let $X$ be a separable Banach space,
let $\Phi \in C(X \times X;\R)$
be such that there exist 
constants $C,p\in (0,\infty)$ for which it holds that
$$ 
  | \Phi(x,y) |	
  \leq
  C( 1 + \| x \|_X^p + \| y \|_X^p)
$$
for all $ (x,y) \in X\times X$, and
let $\calP \subseteq \mathcal{P}_p(X)$ with $\delta_0\in \calP$.
Then the following assertions are equivalent:
\begin{enumerate}
\item\label{item:general_decoupling} 
      For every stochastic basis $(\Omega,\calF,\P,\F)$ with $\F=(\calF_n)_{n\in \N_0}$ and        
      every finitely supported\footnote{Recall that this means that there is an $N\in \N$ with $d_n\equiv 0$ for $n > N$.}
      $(d_n)_{n\in \N} \in \adapt_p(\Omega,\F;X,\calP_{p\textnormal{-ext}})$
      it holds that
      \begin{equation}\label{eq:decouphard}
	\E \Phi\left (\sum_{n=1}^\infty d_n,\sum_{n=1}^\infty e_n \right ) \le 0,
      \end{equation}
      provided that $(e_n)_{n\in \N}$ is an $\F$-decoupled tangent sequence 
      of $(d_n)_{n\in \N}$.
\item\label{item:simple_decoupling}
      For every probability space $(\Omega,\calF,\P)$,
      every finitely supported sequence
      of independent random variables 
      $\varphi = (\varphi_n)_{n\in \N}$ in $ \calL^p(\P;X)$ 
      satisfying $\calL(\varphi_n)\in \calP$ for all $n\in \N$,
      and every $ A_n \in \calF_{n}^{\varphi}$, $n\in \N_0$,
      it holds that
      \begin{equation}\label{eq:decoupsimple} 
	\E \Phi\left (
	  \sum_{n=1}^\infty \varphi_n  1_{A_{n-1}}, 
	  \sum_{n=1}^\infty \varphi'_n 1_{A_{n-1}} \right ) \le 0, 
      \end{equation} 
      where $(\varphi'_n)_{n\in \N}$ is an independent copy
      of $(\varphi_n)_{n\in \N}$.
\end{enumerate}
\end{theorem}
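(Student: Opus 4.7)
The implication (i) $\Rightarrow$ (ii) is the easy direction. Given independent $(\varphi_n)_{n\in\N}$ with $\calL(\varphi_n)\in \calP$ and $A_n\in \calF_n^\varphi$, set $d_n := \varphi_n 1_{A_{n-1}}$ and $e_n := \varphi'_n 1_{A_{n-1}}$, where $(\varphi'_n)$ is an independent copy of $(\varphi_n)$ defined on an enlarged space. Since $A_{n-1}$ is $\calF_{n-1}^\varphi$-measurable and $\varphi_n$ is independent of $\calF_{n-1}^\varphi$, a regular version of $\P(d_n\in\cdot\mid \calF_{n-1}^\varphi)$ is $1_{A_{n-1}}\calL(\varphi_n) + 1_{A_{n-1}^c}\delta_0$, which lies in $\calP\subseteq \calP_{p\textnormal{-ext}}$ because $\delta_0\in\calP$. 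Example~\ref{example:decoupled_sequence} shows $(e_n)$ is a decoupled tangent sequence of $(d_n)$ with respect to the filtration enlarged by $(\varphi_n)$, so (i) applied to this basis yields (ii).

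For (ii) $\Rightarrow$ (i), the plan is a three-stage approximation of the adapted process $(d_n)_{n=1}^{N}$, preserving the conditional distribution along the way. First, I would use Corollary~\ref{cor:factor_through_subsigmaalgebra} to factor each $d_n = d_n^{0}(\cdot, H_n)$, where $H_n$ is uniform on $[0,1]$ and independent of $\calF_{n-1}$, and $d_n^{0}$ is $\calF_{n-1}\otimes \calB([0,1])/\calB(X)$-measurable. This converts the adapted structure into an ``independent noise'' representation. Since $(e_n)$ is decoupled tangent, we can realise $e_n = d_n^{0}(\cdot, H_n')$ on a suitably extended space with $H_n'$ an independent copy of $H_n$ conditionally on $\calH := \sigma(\calF_\infty \cup \sigma(H_n : n\in\N))$, so that both $d_n$ and $e_n$ share the same kernels.

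Second, using the definition of $\calP_{p\textnormal{-ext}}$, for each $\omega$ outside a null set the conditional law $\kappa_{n-1}[\omega,\cdot]$ is a $d_p$-limit of convolutions $\mu_{j,1}^{\omega}*\cdots*\mu_{j,K_j(\omega)}^{\omega}$ of measures in $\calP$. I would then build, measurably in $\omega$, finite discretisations $d_n^{(j)} = \sum_{k=1}^{K_j(\omega)} \varphi_{n,k}$ where $\varphi_{n,k}$ are conditionally independent given $\calF_{n-1}$ with distributions $\mu_{j,k}^\omega\in\calP$, together with corresponding independent copies $\varphi_{n,k}'$ yielding an approximation $e_n^{(j)}$ of $e_n$. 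The measurable choice of $(\mu_{j,k}^\omega)$ is routine using Lemma~\ref{lemma:BorelProb_countablyGen}; the subtle point is to organise this globally so the enlarged filtration containing all the $\varphi_{n,k}$ has $(\varphi_{n,k})_{n,k}$ independent in a product-form, to which (ii) directly applies. Concretely, after relabelling $(\varphi_{n,k})$ as a single sequence $(\psi_m)_{m=1}^{M_j}$ of independent random variables with laws in $\calP$, the indicator sets $A_{m-1}\in \calF_{m-1}^{\psi}$ encode which block $n$ and sub-index $k$ we are in, and the same relabelling turns $e_n^{(j)}$ into $\sum \psi_m' 1_{A_{m-1}}$ with an independent copy.

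Third, I would apply (ii) to obtain $\E\Phi(\sum_n d_n^{(j)}, \sum_n e_n^{(j)})\le 0$ for each $j$, and then pass to the limit $j\to\infty$. Convergence in probability of $\sum_n d_n^{(j)}\to \sum_n d_n$ and $\sum_n e_n^{(j)}\to \sum_n e_n$ follows from the $w^*$-convergence of the convolutions combined with the factorisation (so the noise is fixed, only the kernels vary). Together with the $\calL^p$-uniform integrability built into the definition of $\calP_{p\textnormal{-ext}}$ and the $p$-growth bound $|\Phi(x,y)|\le C(1+\|x\|^p+\|y\|^p)$, Lemma~\ref{lem:conv_p_moments_vs_LpUI} gives uniform $\calL^1$-integrability of $\Phi(\sum d_n^{(j)},\sum e_n^{(j)})$, hence convergence of the expectations by Vitali's theorem. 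The main obstacle will be the second step: constructing the discrete approximations $d_n^{(j)}$ so that they simultaneously (a) respect the adapted-ness and the factorisation, (b) have laws matching a convolution from $\calP$, and (c) are coupled with $e_n^{(j)}$ via the same kernel so that $(e_n^{(j)})$ is an honest decoupled tangent of $(d_n^{(j)})$ in a filtration amenable to (ii); this is where the strong (not merely distributional) nature of Theorem~\ref{thm:factor} is essential.
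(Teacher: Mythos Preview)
Your direction (i)$\Rightarrow$(ii) is correct and matches the paper.

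For (ii)$\Rightarrow$(i), there is a genuine gap in your second and third steps. You write that after relabelling you obtain ``a single sequence $(\psi_m)_{m=1}^{M_j}$ of independent random variables with laws in $\calP$'' and indicator sets ``$A_{m-1}\in \calF_{m-1}^{\psi}$''. Neither claim is justified. Your $\varphi_{n,k}$ are constructed to have laws $\mu_{j,k}^{\omega}\in\calP$ depending on $\omega$ through $\calF_{n-1}$; after relabelling these are \emph{conditionally} distributed according to measures in $\calP$, not unconditionally, so they are not independent with fixed laws and assertion~(ii) does not apply. Even if you first discretise in $\omega$ so that the conditional laws are piecewise constant (which you do not mention), the partition sets that record ``which block $n$ and sub-index $k$ we are in'' come from the original $\calF_{n-1}$, and there is no reason these lie in $\calF_{m-1}^{\psi}$, the natural filtration of the $\psi$'s. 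Assertion~(ii) only grants you indicators measurable in the \emph{natural} filtration of the independent sequence, so you cannot invoke it.

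The paper resolves this by decoupling the two difficulties. First (Proposition~\ref{prop:extend}) it upgrades~(ii) from $\calP$ with the natural filtration to $\calP_{p\textnormal{-ext}}$ with an \emph{arbitrary} filtration containing the $\varphi_n$'s; the convolution/limit passage to $\calP_{p\textnormal{-ext}}$ is done here with fixed (non-random) laws, and the filtration enlargement is handled by a Maurey-type approximation (Lemmas~\ref{lem:dyadic_aux}--\ref{lem:dyadic_stefan}): one inserts many additional independent copies with a fixed non-Dirac law so that any $\calF$-measurable set can be approximated by a set in the $\sigma$-algebra they generate. This Maurey step is the essential idea missing from your outline. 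Only afterwards (Proposition~\ref{prop:simple_to_Lp}) does the paper use the factorisation of Corollary~\ref{cor:factor_through_subsigmaalgebra} together with an $\calL^p$-discretisation of the kernels $d_n^{0}$ to reduce the general adapted case to the case of piecewise-constant conditional laws, where Lemma~\ref{lem:aux_decoupling} turns each $d_n$ into a sum $\sum_k d_{n,k} 1_{A_{n-1,k}}$ with genuinely independent $d_{n,k}$'s --- but with indicators living in a filtration strictly larger than $\calF^{\psi}$, which is why the prior filtration-enlargement step is indispensable.
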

\smallskip

\begin{remark}\label{remark:trivial_implication}
For a sequence of random variables $(1_{A_{n-1}} \varphi_n )_{n\in \N}$ 
as in the setting of statement~\eqref{item:simple_decoupling}
in Theorem~\ref{thm:simple_decoupling_of_vs_full_decoupling} it holds 
that $(1_{A_{n-1}} \varphi_n )_{n\in \N} \in \adapt_p(\Omega,\F^{\varphi,\varphi'};X,\calP)$
with $\F^{\varphi,\varphi'}=(\calF^{\varphi,\varphi'}_n)_{n\in \N_0}$ 
given by
$\calF_0^{\varphi,\varphi'}:= \{\emptyset,\Omega\}$ and
$\calF_n^{\varphi,\varphi'}:=\sigma(\varphi_1,\varphi'_1,\ldots,\varphi_n,\varphi'_n)$ for $n\in \N$.
In particular, the 
implication~\eqref{item:general_decoupling}~$\Rightarrow$~\eqref{item:simple_decoupling}
is obvious by Example~\ref{example:decoupled_sequence}.
\end{remark}
\medskip

Theorem \ref{thm:simple_decoupling_of_vs_full_decoupling} can be extended to more general
$\Phi$. This is done by exploiting the following observation that is an immediate consequence
of Theorem \ref{thm:simple_decoupling_of_vs_full_decoupling}:

\begin{corollary}
\label{cor:simple_decoupling_of_vs_full_decoupling_supremum}
Let $X$ be a separable Banach space and let $\Phi_\lambda \in C(X \times X;\R)$, $\lambda \in \Delta$, 
for an arbitrary non-empty index-set $\Delta$. Suppose that there exist a $p\in (0,\infty)$
and constants $C_\lambda \in (0,\infty)$, $\lambda \in \Delta$, such that 
$$ 
  | \Phi_\lambda(x,y) |
  \leq
  C_\lambda( 1 + \| x \|_X^p + \| y \|_X^p)
$$
for all $ (x,y) \in X\times X$, and
let $\calP \subseteq \mathcal{P}_p(X)$ with $\delta_0\in \calP$.
Then assertions (i) and (ii) of Theorem ~\ref{thm:simple_decoupling_of_vs_full_decoupling}
remain equivalent when inequalities~\eqref{eq:decouphard} and~\eqref{eq:decoupsimple} are replaced by 
 \[ \sup_{\lambda\in \Delta}\E \Phi_\lambda\left (\sum_{n=1}^\infty d_n,\sum_{n=1}^\infty e_n \right ) \le 0 \] 
and 
 \[  \sup_{\lambda \in \Delta} 
        \E \Phi_\lambda \left ( \sum_{n=1}^\infty \varphi_n  1_{A_{n-1}}, 
                                \sum_{n=1}^\infty \varphi'_n 1_{A_{n-1}} 
                        \right ) \le 0,
      \] 
respectively.
\end{corollary}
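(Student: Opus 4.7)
The plan is to reduce the corollary directly to Theorem~\ref{thm:simple_decoupling_of_vs_full_decoupling} via the trivial observation that, for any real family $(a_\lambda)_{\lambda \in \Delta}$, one has $\sup_{\lambda \in \Delta} a_\lambda \le 0$ if and only if $a_\lambda \le 0$ for every $\lambda \in \Delta$. The first step is to fix an arbitrary $\lambda \in \Delta$ and to apply Theorem~\ref{thm:simple_decoupling_of_vs_full_decoupling} with $\Phi := \Phi_\lambda$; this is legitimate because $\Phi_\lambda$ is continuous and satisfies the required growth bound with the single constant $C_\lambda$ and the exponent $p$. For this fixed $\lambda$, the theorem supplies the equivalence between the quantified inequality $\E \Phi_\lambda(\sum_n d_n, \sum_n e_n) \le 0$ (in the sense of~\eqref{item:general_decoupling}) and the quantified inequality $\E \Phi_\lambda(\sum_n 1_{A_{n-1}} \varphi_n, \sum_n 1_{A_{n-1}} \varphi'_n) \le 0$ (in the sense of~\eqref{item:simple_decoupling}).

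Next, I would apply the elementary observation $\lambda$-wise on each side. The supremum-version of statement~(i) in the corollary holds if and only if, for every $\lambda \in \Delta$ separately, the $\lambda$-version of~\eqref{item:general_decoupling} holds; the same reduction applies to~(ii) and~\eqref{item:simple_decoupling}. Combining these two reductions with the $\lambda$-wise equivalence provided by the theorem yields the required equivalence of~(i) and~(ii) in the corollary.

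I do not anticipate any substantive obstacle. The continuity and growth hypotheses in the corollary are imposed individually for each $\lambda$, not uniformly in $\lambda$, so no new integrability or regularity issue enters; in particular, one never needs to exchange the supremum with the expectation or to carry uniform estimates through Theorem~\ref{thm:simple_decoupling_of_vs_full_decoupling}. The entire argument is purely set-theoretic once the theorem is invoked separately for each $\lambda \in \Delta$.
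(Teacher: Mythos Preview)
Your proposal is correct and matches the paper's approach: the paper simply states that the corollary is an immediate consequence of Theorem~\ref{thm:simple_decoupling_of_vs_full_decoupling}, and your $\lambda$-wise reduction via the equivalence $\sup_{\lambda}a_\lambda\le 0 \iff a_\lambda\le 0$ for all $\lambda$ is exactly the intended argument.
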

\medskip

This corollary allows us to prove Theorem \ref{thm:simple_decoupling_of_vs_full_decoupling_intro}
from Section \ref{sec:intro}:

\begin{proof}[Proof of Theorem \ref{thm:simple_decoupling_of_vs_full_decoupling_intro}]
The statement for general $\Delta$ follows from the case $\Delta=\{\lambda_0\}$ so that we may assume this case and let
$\underline{\Psi} := \underline{\Psi}_{\lambda_0}$. By the lower- and upper semi-continuity we can find continuous
$\underline{\Psi}^\ell,\overline{\Psi}^\ell:[0,\infty)\to [0,\infty)$, $\ell \in \N$, such that 
$\underline{\Psi}^\ell(\xi) \uparrow \underline{\Psi}(\xi)$ and 
$C(1+| \xi |^p) \geq \overline{\Psi}^\ell(\xi) \downarrow \overline{\Psi}(\xi)$ for all $\xi \in [0,\infty)$. Next, we set
$\Phi_\ell(x,y):= \underline{\Psi}^\ell(\|Sx\|_Y) - \overline{\Psi}^\ell(\|Ty\|_Z)$, $\ell \in \N$. 
Then the monotone convergence theorem implies that for all $\xi,\eta\in \calL^p(X)$ the conditions
$\sup_{\ell\in \N}\E \Phi_\ell(\xi,\eta) \le 0$ and
$\E \left [ \underline{\Psi}(\|S\xi\|_Y) - \overline{\Psi}(\|T\eta\|_Z)\right ] \le 0$
are equivalent.
\end{proof}

Let us list some common choices of $\calP$ in the setting of decoupling 
inequalities. To do so, we exploit the following lemma:

\begin{lemma}
\label{lem:PhiConv}
Let $C,p\in (0,\infty)$, let
$X$ be a separable Banach space, let
$(\Omega,\calF,\P)$ be a probability space,
and let 
$
 \Phi \in C(X;\R)
$ 
be such that
\begin{equation}\label{eq:Phi_growth}
 | \Phi(x) |
 \leq 
 C(1+\| x \|_X^p )
\end{equation}
for all $x\in X$. Assume $\xi,\xi_n \in \calL^p(\P;X)$, 
$ n \in \N$, such that $\xi_n \stackrel{w^*}{\rightarrow} \xi$ 
as $n\rightarrow \infty$ and that $(\xi_n)_{n\in \N}$
is uniformly $L^p$-integrable. Then 
\begin{equation}\label{eq:PhiConv}
 \lim_{n\rightarrow \infty}
 \E \Phi ( \xi_n)
 =
 \E \Phi ( \xi ).
\end{equation}
\end{lemma}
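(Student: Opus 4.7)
The plan is to reduce this to a standard Vitali-type argument via Skorokhod representation. Since $X$ is a separable (hence Polish) Banach space and $\xi_n \xrightarrow{w^*} \xi$, Skorokhod's representation theorem yields, on some auxiliary probability space $(\widetilde{\Omega},\widetilde{\calF},\widetilde{\P})$, random variables $\widetilde{\xi}, \widetilde{\xi}_n$ with $\calL(\widetilde{\xi}) = \calL(\xi)$ and $\calL(\widetilde{\xi}_n) = \calL(\xi_n)$ such that $\widetilde{\xi}_n \to \widetilde{\xi}$ almost surely. Since the quantities in \eqref{eq:PhiConv} depend only on the laws $\calL(\xi)$ and $\calL(\xi_n)$, it suffices to prove the convergence on $\widetilde{\Omega}$.

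First I would use continuity of $\Phi$ to conclude $\Phi(\widetilde{\xi}_n) \to \Phi(\widetilde{\xi})$ $\widetilde{\P}$-almost surely. Next, I would upgrade this almost sure convergence to $L^1(\widetilde{\P})$-convergence by verifying uniform integrability of $(\Phi(\widetilde{\xi}_n))_{n\in\N}$. By the growth bound~\eqref{eq:Phi_growth}, for any $K>0$ and any $n\in \N$,
\begin{equation*}
\int_{\{|\Phi(\widetilde{\xi}_n)|>K\}} |\Phi(\widetilde{\xi}_n)|\,d\widetilde{\P}
\leq
C\,\widetilde{\P}(|\Phi(\widetilde{\xi}_n)|>K)
+ C\int_{\{C(1+\|\widetilde{\xi}_n\|_X^p)>K\}} \|\widetilde{\xi}_n\|_X^p\,d\widetilde{\P}.
\end{equation*}
The first term tends to $0$ uniformly in $n$ as $K\to\infty$ because $\sup_n \E\|\widetilde{\xi}_n\|_X^p <\infty$ (a consequence of the assumed uniform $L^p$-integrability). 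The second term tends to $0$ uniformly in $n$ directly by the hypothesis that $(\widetilde{\xi}_n)_{n\in\N}$, which shares the laws of $(\xi_n)_{n\in\N}$, is uniformly $L^p$-integrable. Hence $(\Phi(\widetilde{\xi}_n))_{n\in\N}$ is uniformly integrable in the classical sense.

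Finally, Vitali's convergence theorem applied to $\Phi(\widetilde{\xi}_n) \to \Phi(\widetilde{\xi})$ yields $\E\Phi(\widetilde{\xi}_n) \to \E\Phi(\widetilde{\xi})$, and transferring back to the original space gives the claim~\eqref{eq:PhiConv}. I do not expect any serious obstacle: the only point requiring care is the uniform integrability step, where one must use both halves of the growth condition and the bound $\sup_n \E\|\xi_n\|_X^p <\infty$, which itself follows from Lemma~\ref{lem:conv_p_moments_vs_LpUI} (ensuring $\E\|\xi_n\|_X^p \to \E\|\xi\|_X^p <\infty$). A minor alternative would be to avoid Skorokhod representation and invoke Lemma~\ref{lem:conv_p_moments_vs_LpUI} together with a truncation: approximate $\Phi$ by bounded continuous functions $\Phi_M := \Phi \cdot \chi_M$ where $\chi_M$ is a continuous cutoff of $\{\|x\|_X \leq M\}$, use the definition of $w^*$-convergence on $\Phi_M$, and control the tail $\Phi - \Phi_M$ by the uniform $L^p$-integrability. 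Either route gives~\eqref{eq:PhiConv}.
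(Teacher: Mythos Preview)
Your proposal is correct and follows essentially the same approach as the paper: both arguments establish that $(\Phi(\xi_n))_{n\in\N}$ is uniformly $L^1$-integrable via the growth bound~\eqref{eq:Phi_growth} and the assumed uniform $L^p$-integrability of $(\xi_n)_{n\in\N}$, and then pass to the limit. The paper is slightly more compact, noting that $\Phi(\xi_n)\stackrel{w^*}{\to}\Phi(\xi)$ by the continuous mapping theorem and then invoking Lemma~\ref{lem:conv_p_moments_vs_LpUI} (i.e.\ Kallenberg's Lemma~4.11) with $p=1$, whereas you unpack that last step via Skorokhod representation and Vitali; these are equivalent formulations of the same standard fact.
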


\begin{proof}
It follows from the uniform $L^p$-integrability of $(\xi_n)_{n\in \N}$ and 
estimate~\eqref{eq:Phi_growth} that $(\Phi(\xi_n))_{n\in \N} $
is uniformly $L^1$-integrable. Moreover, note that $\Phi(\xi_n) \stackrel{w^*}{\rightarrow} \Phi(\xi)$ as $n\rightarrow \infty$,
so that we may apply Lemma \ref{lem:conv_p_moments_vs_LpUI} for $p=1$.
\end{proof}

We remark that $\xi_n \to \xi$ in $\calL^p(\P;X)$, $\xi_n,\xi \in \calL^p(\P;X)$, implies 
the assumptions of Lemma \ref{lem:PhiConv} (see \cite[Lemma 4.7]{Kallenberg:2002}).

\medskip

\begin{example}[\sc Adapted processes]
If $p\in (0,\infty)$ and $\calP=\calP_p(X)$, then 
$\calP_{p\textnormal{-ext}}=\calP$ by Lemma~\ref{lem:PhiConv}
and the space $\adapt_p(\Omega,\F;X,\calP)$
      consists of all $(\calF_n)_{n\in \N}$-adapted processes 
      $(d_n)_{n\in \N}$ in $\calL^p(\P;X)$.
\end{example}
\medskip

\begin{example}[\sc $L^p$-martingales]
If $p\in [1,\infty)$ and $\calP$ consists of all mean zero measures in $\calP_p(X)$,
      then $\calP_{p\textnormal{-ext}} = \calP$ by Lemma~\ref{lem:PhiConv} 
      (one can test with $\Phi(x):= \langle x,a \rangle$, where $a\in X'$ and $X'$ is the norm-dual) and  
      $\adapt_p(\Omega,\F;X,\calP)$ consists of all $L^p$-integrable $\F$-martingale difference sequences.
\end{example}
\medskip

\begin{example}[\sc Conditionally symmetric adapted processes]
Suppose $p\in (0,\infty)$ and  $\calP$ consists of all symmetric measures 
      in $\calP_p(X)$. As a measure $\mu\in \calP(X)$
      is symmetric if and only if for all $f\in C_b(X;\R)$ it holds that 
      $\int_X f(x) \,d\mu(x) = \int_X f(-x) \,d\mu(x)$, 
      it follows that $\calP_{p\textnormal{-ext}}=\calP$.
      Moreover, the set $\adapt_p(\Omega,\F;X,\calP)$ 
      consists of all sequences of $X$-valued $(\calF_n)_{n\in \N}$-adapted sequences 
      of random
      variables $(d_n)_{n\in \N}$ such that $d_n \in \calL^p(\P;X)$ and $d_n$
      is $\calF_{n-1}$-conditionally
      symmetric for all $n\in \N$, i.e., for all $n\in \N$ and all $B \in \calB(X)$
      it holds that
      $
	\P( d_n \in B \,|\, \calF_{n-1} ) = \P( d_n \in -B \,|\, \calF_{n-1} )
      $ a.s.
\end{example}
\medskip

\begin{example}[\sc One dimensional laws]\label{example:oneDlaw}
If $p\in (0,\infty)$, $\emptyset \not = \calP_0 \subseteq \calP_p(\R)$, 
      and
      \[
	\calP =\calP(\calP_0,X) :=
		\left\{ \mu \in \calP_p(X) 
	  \colon 
	    \exists \mu_0 \in \calP_0,\, x\in X \colon
	    \mu(\cdot) = \mu_0\big(\{ r \in \R \colon rx \in \cdot \} \big)
	\right\}, 
       \]
      then an $X$-valued random variable $\varphi$
      satisfies $\calL(\varphi)\in \calP$ if and only if there exists
      an $x\in X$ and a $\R$-valued random variable $\varphi_0$
      such that $\varphi = x \varphi_0$ and $\calL(\varphi_0) \in \calP_0$.
      Moreover, $\adapt_p(\Omega,\F;X,\calP)$ contains
      all sequences of the 
      form $(\varphi_n v_{n-1} )_{n\in\N}$ where $(\varphi_n)_{n\in\N}$ is an 
      $(\calF_n)_{n\in \N}$-adapted sequence 
      of $\R$-valued random variables 
      such that $\varphi_n$ is independent of $\calF_{n-1}$ 
      and $\calL(\varphi_n) \in \calP_{0}$, and
      $v_{n-1} \in \calL^p(\calF_{n-1};X)$ for all $n\in \N$. 
      Finally, it holds that 
      \[ 
         \calP((\calP_0)_{p\textnormal{-ext}},X) 
	 \subseteq  \calP_{p\textnormal{-ext}}.
      \]
      For a discussion of the case that 
      $\calP_0 = \{ \frac12 (\delta_{-1} + \delta_{1})\}$ and $X=\R$ we 
      refer to the next section. 
\end{example}


\section{Characterization of \texorpdfstring{$\calP_{p\textnormal{-ext}}$}{extended set of measures} for Rademacher sums}
\label{sec:char_Pext_Rad}

Given $p\in (0,\infty)$ and a non-empty $\calP\subset \calP_p(X)$, there does not seem to be a 
simple characterization of $\calP_{p\textnormal{-ext}}$ 
as defined in~Definition~\ref{definition:p-measure_extension}. However, Theorem ~\ref{thm:char_p-ext_Rademacher}
below deals with this question in the specific 
case that $X=\R$ and $\calP = \calP^{\textnormal{Rad}} := \{ \frac12 \delta_{-\xi} + \frac12 \delta_{\xi}:\xi\in\R \}$.
Recall that a \emph{Rademacher sequence} is a sequence of independent,
identically distributed random variables $(r_n)_{n\in\N}$ satisfying 
$\calL(r_1)= \frac12 \delta_{-1} + \frac12 \delta_{1} $.

\begin{theorem}\label{thm:char_p-ext_Rademacher}
Let $\calP^{\textnormal{Rad}} = \{ \frac12 \delta_{-\xi} + \frac12 \delta_{\xi}:\xi\in\R \}$.
Then $\calP_{p\textnormal{-ext}}^{\textnormal{Rad}} = \calP_{2\textnormal{-ext}}^{\textnormal{Rad}}$ 
for all $p\in (0,\infty)$ and for every $\mu \in \calP_{2\textnormal{-ext}}^{\textnormal{Rad}}$ 
there exist $\sigma\in [0,\infty)$ and $(a_n)_{n\in \N}\in \ell^2$
such that 
$\mu = \calL( \sigma \gamma + \sum_{n=1}^{\infty} a_n r_n )$,
where $\gamma$ is a standard Gaussian random variable 
and $(r_n)_{n\in \N}$ is a Rademacher sequence independent
of $\gamma$.
\end{theorem}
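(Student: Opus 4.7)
I would prove the theorem by establishing both inclusions of the characterization at $p=2$ and then reducing the general $p$ case to $p=2$ via Khintchine's inequality.

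\emph{Inclusion $\supseteq$ (easier direction).} Given $\sigma \ge 0$, $(a_n)\in \ell^{2}$, a standard Gaussian $\gamma$, and an independent Rademacher sequence $(r_n)$, I will approximate $\mu:=\calL(\sigma\gamma + \sum_n a_n r_n)$ by convolutions of measures in $\calP^{\textnormal{Rad}}$. By Kolmogorov's two-series theorem, $\sum_{n=1}^{N} a_n r_n \to \sum_{n} a_n r_n$ almost surely and in $\calL^{2}$; and by the classical central limit theorem, $\sigma M^{-1/2}(r'_1 + \cdots + r'_M) \stackrel{w^*}{\rightarrow} \sigma\gamma$ with all moments uniformly bounded via Khintchine's inequality. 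Combining the two into a diagonal sequence of convolutions of Rademacher measures and invoking the fact that convolution is $d_p$-continuous (Lemma~\ref{lem:char_Ppext_new}) yields the required approximating sequence. Uniform $\calL^{p}$-integrability follows from Khintchine: all $\calL^{q}$-norms of finite Rademacher sums are equivalent up to constants depending on $q$, so uniform $\calL^{2p}$-boundedness upgrades to uniform $\calL^{p}$-integrability by a routine truncation.

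\emph{Inclusion $\subseteq$ at $p=2$ and form of the limit.} Let $\mu\in \calP_{2\textnormal{-ext}}^{\textnormal{Rad}}$ with approximating sequence $\mu_j := \mu_{j,1}*\cdots *\mu_{j,K_j} \stackrel{w^*}{\rightarrow} \mu$ and uniformly $\calL^{2}$-integrable, where each $\mu_{j,k}=\tfrac{1}{2}(\delta_{-\xi_{j,k}}+\delta_{\xi_{j,k}})$ with $\xi_{j,k}\ge 0$ (WLOG). Then $\mu_j=\calL(\sum_k \xi_{j,k} r_k)$ has variance $\sum_k \xi_{j,k}^{2}$, and Lemma~\ref{lem:conv_p_moments_vs_LpUI} gives $\sum_k \xi_{j,k}^{2}=\int x^{2}\,d\mu_j \to \tau^{2}:=\int x^{2}\,d\mu$. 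Sorting the weights within each $j$ into decreasing order and extending by zero, a diagonal extraction produces a subsequence (still called $(\mu_j)$) such that $\xi_{j,k}\to a_k$ for every $k\in\N$; the limit $(a_k)$ is non-increasing, non-negative, and satisfies $\sum_k a_k^{2}\le \tau^{2}$ by Fatou. Set $\sigma^{2}:=\tau^{2}-\sum_k a_k^{2}\ge 0$. For the characteristic function $\varphi_j(t)=\prod_k \cos(\xi_{j,k} t)$, I split at index $N$: the finite piece $\prod_{k\le N}\cos(\xi_{j,k}t) \to \prod_{k\le N}\cos(a_k t)$ by continuity. For the tail, the ordering forces $\max_{k>N}\xi_{j,k}=\xi_{j,N+1}\to a_{N+1}$, which is arbitrarily small for $N$ large. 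For such $N$ and $j$ large, the expansion $\log\cos u = -u^{2}/2 + O(u^{4})$ for small $|u|$ yields
\[
\log \prod_{k>N}\cos(\xi_{j,k}t) = -\tfrac{t^{2}}{2}\sum_{k>N}\xi_{j,k}^{2} + O\!\left(t^{4}\,\max_{k>N}\xi_{j,k}^{2}\cdot \sum_k \xi_{j,k}^{2}\right).
\]
Using $\sum_{k\le N}\xi_{j,k}^{2}\to \sum_{k\le N}a_k^{2}$, the main term converges to $-\tfrac{t^{2}}{2}(\sigma^{2}+\sum_{k>N}a_k^{2})$, while the error is $O(t^{4} a_{N+1}^{2}\tau^{2}) \to 0$ as $N\to\infty$. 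Hence $\varphi_j(t)\to e^{-\sigma^{2}t^{2}/2}\prod_k \cos(a_k t)$ for every $t\in\R$, which is the characteristic function of $\sigma\gamma + \sum_n a_n r_n$. Uniqueness of limits for characteristic functions then yields the stated form of $\mu$.

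\emph{Reduction to $p=2$ and main obstacle.} For the equality $\calP_{p\textnormal{-ext}}^{\textnormal{Rad}}=\calP_{2\textnormal{-ext}}^{\textnormal{Rad}}$, I invoke Khintchine's inequality: for any convolution $\mu_j$ of measures in $\calP^{\textnormal{Rad}}$, the associated random variable $\sum_k \xi_{j,k} r_k$ has equivalent $\calL^{q}$-norms for all $q\in(0,\infty)$, with constants depending only on $q$. Hence uniform $\calL^{p}$-integrability implies uniform $\calL^{q}$-boundedness and then, by the truncation argument, uniform $\calL^{q}$-integrability for every $q\in(0,\infty)$; all extensions $\calP_{p\textnormal{-ext}}^{\textnormal{Rad}}$ therefore coincide. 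I expect the main obstacle to be the characteristic function analysis in the converse direction: showing that the mass ``lost'' by Fatou's inequality, $\tau^{2}-\sum_k a_k^{2}=\sigma^{2}$, is exactly absorbed as the Gaussian variance via a uniform control of the Taylor remainder in $\log\cos$ on the tail indices. The decreasing ordering of the weights is essential here, as it is what forces $\max_{k>N}\xi_{j,k}^{2}$ to be small for large $N$ uniformly in $j$ along the extracted subsequence.
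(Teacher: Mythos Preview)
Your proposal is correct and shares the same skeleton as the paper's proof---Khintchine to make the extension $p$-independent, decreasing rearrangement of the weights, and a diagonal subsequence so that each coordinate $\xi_{j,k}\to a_k$---but you diverge at the key step of identifying the Gaussian component. You work analytically with characteristic functions, expanding $\log\cos u=-u^2/2+O(u^4)$ on the tail $k>N$ and reading off $\sigma^2=\tau^2-\sum_k a_k^2$ from Fatou's defect. The paper instead works probabilistically: it extracts a further subsequence along which $\|b_j-a_0\|_{\ell^2}\to\sigma$, chooses a growing cut-off $N_j\to\infty$ with $\sum_{k\le N_j}|b_{j,k}-a_{0,k}|^2\to 0$, and splits $\xi_j=\eta_j+\zeta_j$ into independent pieces with $\eta_j\to\sum_k a_{0,k}r_k$ in $\calL^2$ and $\zeta_j\stackrel{w^*}{\to}\sigma\gamma$ via the Lindeberg CLT. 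Your route is arguably more elementary (the $\log\cos$ expansion \emph{is} the CLT for Bernoulli variables, so you do not need to invoke Lindeberg), but it requires care with the order of the limits in $j$ and $N$ and with the possible zeros of $\cos(a_k t)$ for small $k$; the paper's splitting is conceptually cleaner because the independence of $\eta_j$ and $\zeta_j$ makes the convolution structure of the limit manifest. Note also that the theorem as stated only asserts that every $\mu\in\calP_{2\textnormal{-ext}}^{\textnormal{Rad}}$ has the claimed form; your $\supseteq$ direction is extra, though natural and correct.
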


\begin{proof}
Let $p\in (0,\infty)$
and let $\mu \in \calP_{p\textnormal{-ext}}^{\textnormal{Rad}}$,
and let $(r_n)_{n\in\N}$ be a Rademacher sequence. Then 
for $k\in \N$ there exist finitely supported 
$a_k = (a_{k,n})_{n\in \N} \in \ell^2$
such that $a_{k,n}\ge 0$ and,
if we define $\psi_k := \sum_{n=1}^{\infty} a_{k,n} r_n $,
then $(\psi_k)_{k\in \N}$ is uniformly $L^p$-integrable and
$\calL(\psi_k) \stackrel{w^*}{\rightarrow} \mu$.
\smallskip

{\bf (a)} It follows from the Kahane-Khintchine inequalities that 
$(\psi_k)_{k\in \N}$ is uniformly $L^q$-integrable for 
all $q\in (0,\infty)$, from which we can conclude 
that $\calP_{p\textnormal{-ext}}^{\textnormal{Rad}} = \calP_{q\textnormal{-ext}}^{\textnormal{Rad}}$
for all $p,q\in (0,\infty)$ and that $\sup_{k\in \N} \E | \psi_k |^2 < \infty$.
As the law of $\psi_k$ is invariant with respect to permutations of 
the underlying Rademacher sequence we assume that
\[ 0 \leq a_{k,n+1} \leq a_{k,n} 
   \sptext{1}{for all}{1} 
   n,k \in \N, \]
so that  
\[     \sup_{k,n\in \N} a_{k,n}
     = \sup_{k\in \N} a_{k,1}
     \le \sup_{k\in \N} \| a_k \|_{\ell^2} =:c < \infty. \]
{\bf (b) Finding an appropriate sub-sequence 
$(\xi_j)_{j\in \N}$ of $(\psi_k)_{k\in \N}$}: 
Step (a) implies that
\begin{equation}\label{eq:bound_a}
  a_{k,n} \leq c n^{-1/2}
  \sptext{1}{for}{1} k,n \in \N.
\end{equation}
As $(a_{j,1})_{j \in \N}$ is bounded it contains a convergent sub-sequence 
$( a_{j^1_i,1})_{i \in \N}$ with limit $a_{0,1}$. 
Because $( a_{j^1_i, 2})_{i \in \N}$ is bounded as well, there is a convergent
sub-sequence $( a_{j^2_i, 2})_{i \in \N}$ with limit $a_{0,2}$.
Continuing, we extract from $( a_{j^{m-1}_i, m})_{i \in \N}$ a convergent
sub-sequence $( a_{j^{m}_i, m})_{i \in \N}$ with limit $a_{0,m}$.
Finally, for $m\in \N$, we pick $k_m \in \{ j^m_i : i \in N\}$ with
$1\le k_1 < k_2 < \cdots$ such that $| a_{k_m,n} - a_{0,n} | < c m^{-1/2}$
for all $n \in \{1,\dots, m \}$.
By \eqref{eq:bound_a} it follows that
$
  | a_{0,n} - a_{k_m,n} |
  \leq
  c \min \{ m^{-1/2} , n^{-1/2} \}
$
for all $ n,m\in \N$.
Moreover, by Fatou's lemma we have
$ 
  \| a_0 \|_{\ell^2} 
  \leq c
$
and therefore 
$ 
  \sup_{m\in \N}
    \| a_{k_m} - a_0 \|_{\ell^2}
  \leq 2c,
$
whence there exists a 
$ \sigma \in [0,2c]$
and a sub-sequence 
$ (b_j)_{j\in \N}
  :=
  ( a_{k_{m_j}} )_{j\in \N}
$
such that $m_j\geq j$ for all $j\in \N$
and 
$ \lim_{j\rightarrow \infty} 
 \| b_j - a_0 \|_{\ell^2} 
 =
 \sigma
$. Note that by construction
we have
\[
  | a_{0,n} - b_{j,n} |
  \leq
  c \min \{ j^{-1/2} , n^{-1/2} \}
\]
for all $j,n\in \N$.
The random variables
$ \xi_j := \sum_{n=1}^{\infty} b_{j,n} r_n $,
$ j\in \N$, form a sub-sequence of $(\psi_k)_{k\in \N}$,
so that $\xi_j \stackrel{w^*}{\rightarrow} \psi $
as well.
\medskip

{\bf (c) Decomposition of $(\xi_j)_{j\in \N}$:}
By construction there exists a non-decreasing sequence $(N_j)_{j\in N}$ in $\N$ such that
$\lim_{j\to\infty} N_j=\infty$ and such that
$
 \lim_{j\rightarrow \infty} 
  \sum_{n=1}^{N_j} | a_{0,n} - b_{j,n} |^2
  = 0
$, e.g. $N_j := \lfloor j^r\rfloor$ for some $r\in (0,1)$. 
Now we decompose
\[ \xi_j = \sum_{n=1}^{N_j} b_{j,n} r_n + \sum_{n=N_j+1}^\infty b_{j,n} r_n 
         =: \eta_j + \zeta_j. \]
The random variables  $\eta_j$ and $\zeta_j$ are independent. Moreover,
\[ \E \left | \sum_{n=1}^\infty a_{0,n} r_n - \eta_j \right |^2 
   =  \sum_{n=1}^{N_j} | a_{0,n} - b_{j,n} |^2 + \sum_{n=N_j+1}^\infty | a_{0,n}|^2 \to 0 
   \sptext{1}{as}{1} j\to \infty. \]
Regarding the $(\zeta_j)_{j\in \N}$ we observe that, by construction,
\[ \lim_{j\to \infty} \sup_{n>N_j} |b_{j,n}| = 0 
   \sptext{1}{and}{1} 
   \lim_{j\to \infty} \sum_{n=N_j+1}^\infty |b_{j,n}|^2 = \sigma^2,\]
where we use $\lim_{j\to\infty} \| b_j - a_0\|_{\ell^2}=\sigma$ and 
$\lim_{j\to\infty} \| a_0 - (b_{j,n} \one_{n\le N_j})_{n\in \N} \|_{\ell^2}=0$.
Adapting the Lindeberg condition \cite[Theorem 5.12]{Kallenberg:2002}) yields that $\zeta_j \stackrel{w^*}{\rightarrow} \sigma \gamma $
 where $ \gamma $ is a 
standard Gaussian distributed random variable. Now the assertion follows by the independence of $\eta_j$ and $\zeta_j$, and
because of $\eta_j \stackrel{w*}{\rightarrow} \sum_{n\in\N} a_{0,n} r_n$.
\end{proof}

\begin{remark}
\label{remark:inclusion_p_rad_ext}
Let $(r_n)_{n\in \N}$ be a Rademacher sequence and define 
\begin{equation*}
 \calP:=
 \left\{ 
    \calL\left( \sum_{n=1}^{\infty} a_n r_n \right)
    \colon
    (a_n)_{n=1}^{\infty} \in \ell^2 
 \right\}.    
\end{equation*}
Then we have the following:
\begin{enumerate}
\item \label{item:calP_small}
      $\calP \subsetneq \calP_{\textnormal{2-ext}}^{\textnormal{Rad}}$
      as $\calP$ does not contain the Gaussian law $N(0,1)$:
      let $(r_n)_{n\in \N}$ be a Rademacher sequence and
      $a = (a_n)_{n\in \N} \in \ell^2$, and
      suppose $ \xi:=\sum_{n=1}^{\infty} a_n r_n \sim N(0,1)$.
      We have $\E \xi^2 = \| a \|_{\ell^2}^2$ and 
      $\E \xi^4 = 3 \| a \|_{\ell^2}^4  -2 \| a \|_{\ell^4}^4$.
      Hence if $\E \xi^2 = 1$, then $\E \xi^4 < 3$, so that $\xi$ cannot have
      a standard Gaussian law.
\item There are symmetric measures in $\calP_2(\R)$ that do not belong to
      $\calP_{\textnormal{2-ext}}^{\textnormal{Rad}}$: we take a symmetric $\mu\in \calP_2(\R)$
      such that for a random variable $\xi$ with the law $\mu$ we have $\P(\xi=0)>0$, 
      $\E \xi^2 = 1$, and $\E \xi^4 = 3$. The condition $\P(\xi=0)>0$ implies that we can assume
      $\xi=\sum_{n=1}^{\infty} a_n r_n $ for  $a = (a_n)_{n\in \N} \in \ell^2$.
      But now we can conclude with the argument from \eqref{item:calP_small}.
\end{enumerate}
\end{remark}


\section{Dyadic decoupling and stochastic integration}
\label{sec:stochInt}

In this section we consider the case of decoupling of dyadic martingales and 
combine our main result, i.e., Theorem~\ref{thm:simple_decoupling_of_vs_full_decoupling}
with a standard extrapolation argument to obtain a decoupling result that is useful for 
the theory of stochastic integration of vector-valued stochastic processes, see 
Theorem~\ref{thm:stochastic_integration} below.
\bigskip

Before we start we explain by the  next lemma that {\it any reasonable upper decoupling} implies that the underlying Banach space $X$
cannot contain subspaces $E_N$ of dimension $N$, $N=1,2,\ldots$, such that the Banach-Mazur distances of $E_N$
to $\ell^\infty_N$ are uniformly bounded in $N$. The lemma is an adaptation of the examples found in
\cite{Garling:1990} and \cite{Veraar:2007}.
\medskip

\begin{lemma}
\label{lemma:upper_decoupling_finite_cotype}
Let $q\in [2,\infty)$. Then there exists a constant $c_q\in (0,\infty)$ 
such that for all $N\in \N$, 
every sequence of independent, identically distributed $\R$-valued random variables $\varphi=(\varphi_n)_{n=1}^{N}$,
and every sequence of independent, identically distributed mean-zero $\R$-valued random variables 
$\varphi'=(\varphi_n')_{n=1}^{N}$
independent of $\varphi$ and such that $\E(|\varphi_1'|^q)<\infty$
there exists an $(\calF_n^{\varphi})_{n=0}^{N}$-adapted
$\ell_{2^{N}}^{\infty}$-valued sequence $(v_n)_{n=0}^{N-1}$ such that:
\begin{enumerate}
\item $\E \Big( \left \| \sum_{n=1}^N \varphi'_n v_{n-1} \right\|^{q}_{\ell^\infty_{2^N}} \Big)
       \le c_q^q \, N^\frac{q}{2} \, \E(|\varphi_1'|^q)$.
\item If for some $c\in (0,\infty)$, $\eta \in (0,1]$ it holds that $\P(|\varphi_1|\ge c) = \eta$,
      then
      $\P \left ( \| \sum_{n=1}^N \varphi_n v_{n-1}  \|_{\ell^\infty_{2^N}} \ge \tfrac{c\eta N}{2} \right )  \ge \frac{1}{2}$
      for $N\ge 2/\eta$.  
\end{enumerate}
\end{lemma}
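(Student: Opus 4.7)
The plan is to construct $(v_n)_{n=0}^{N-1}$ by a tree-indexed indicator scheme. Identify the $2^N$ coordinates of $\ell_{2^N}^\infty$ with $\{-1,+1\}^N$, and let $\tau_k := 1$ if $\varphi_k\ge 0$ and $\tau_k := -1$ otherwise, so that $\varphi_k\tau_k = |\varphi_k|$ a.s.\ and $\tau_k$ is $\calF_k^\varphi$-measurable. For $n\in\{1,\ldots,N\}$ and $\epsilon=(\epsilon_1,\ldots,\epsilon_N)\in\{-1,+1\}^N$, define
\[
  v_{n-1}(\epsilon) := \epsilon_n\prod_{k=1}^{n-1}1_{\{\epsilon_k=\tau_k\}},
\]
with empty product equal to $1$. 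Then $v_{n-1}\in\ell_{2^N}^\infty$ with $\|v_{n-1}\|_\infty\le 1$, $v_0(\epsilon)=\epsilon_1$ is deterministic, and $(v_n)_{n=0}^{N-1}$ is $(\calF_n^\varphi)_{n=0}^N$-adapted.

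For (ii), I would evaluate at the random coordinate $\epsilon^*:=\tau$: since $v_{n-1}(\tau)=\tau_n$ for every $n$,
\[
  \Bigl\|\sum_{n=1}^N \varphi_n v_{n-1}\Bigr\|_{\ell_{2^N}^\infty}\ge \sum_{n=1}^N \varphi_n\tau_n = \sum_{n=1}^N |\varphi_n|\ge c\,S,
\]
where $S:=\#\{n\colon |\varphi_n|\ge c\}\sim\mathrm{Bin}(N,\eta)$. Using the classical fact that every median of $\mathrm{Bin}(N,\eta)$ lies in $\{\lfloor N\eta\rfloor,\lceil N\eta\rceil\}$, combined with the inequality $\lfloor N\eta\rfloor\ge N\eta/2$ valid whenever $N\eta\ge 2$, we obtain $\P(S\ge N\eta/2)\ge 1/2$, which yields (ii).

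For (i), the key observation is that the tree-structured support of $v_{n-1}$ collapses the supremum over $2^N$ coordinates to a supremum over at most $N+1$ partial sums of an auxiliary martingale. Fixing $\epsilon$ and setting $m(\epsilon):=\min\{k\colon \epsilon_k\neq\tau_k\}$ (with $m=N+1$ if $\epsilon=\tau$), a direct computation using $\epsilon_k=\tau_k$ for $k<m$ and $\epsilon_m=-\tau_m$ gives
\[
 \sum_{n=1}^N \varphi_n'\,v_{n-1}(\epsilon)
 = \begin{cases} 2T_{m-1}-T_m, & m\le N,\\ T_N, & m=N+1,\end{cases}
\]
where $T_k:=\sum_{n=1}^k \varphi_n'\tau_n$. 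Hence $\bigl\|\sum_{n=1}^N \varphi_n'v_{n-1}\bigr\|_{\ell_{2^N}^\infty}\le 3\max_{1\le k\le N}|T_k|$. Since $\tau$ is $\sigma(\varphi)$-measurable and independent of $\varphi'$, and $|\tau_k|=1$, the sequence $(T_k)$ is a mean-zero $L^q$-martingale (with respect to $\sigma(\varphi,\varphi_1',\ldots,\varphi_k')$) whose increments have absolute values $|\varphi_k'|$. Burkholder--Davis--Gundy followed by Jensen's inequality $(\sum_{n=1}^N (\varphi_n')^2)^{q/2}\le N^{q/2-1}\sum_{n=1}^N|\varphi_n'|^q$ (valid for $q\ge 2$) then yields
\[
 \E\max_{k\le N}|T_k|^q \le C_q\,\E\Bigl(\sum_{n=1}^N(\varphi_n')^2\Bigr)^{q/2}\le C_q\,N^{q/2}\E|\varphi_1'|^q,
\]
which proves (i) with $c_q^q = 3^qC_q$.

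The main obstacle is producing the collapse identity in (i): without the tree structure, a naive max of $2^N$ mean-zero sums of variance at most $N\E|\varphi_1'|^2$ would be of order $\sqrt{N\log 2^N}=N$, which is far too large. The construction above ensures that coordinates sharing the same $m(\epsilon)$ give identical sums, so the supremum is in fact over only $N+1$ values, each an affine combination of two martingale partial sums $T_{m-1},T_m$. Once this is noted, the remaining estimates are routine, and the matching of the threshold $N\ge 2/\eta$ in (ii) follows from the sharp median bound for the binomial distribution rather than from Chebyshev's inequality.
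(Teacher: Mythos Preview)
Your proof is correct and follows essentially the same approach as the paper: the construction $v_{n-1}(\epsilon)=\epsilon_n\prod_{k<n}1_{\{\epsilon_k=\tau_k\}}$ coincides with the paper's tree-indexed indicators, and both parts are handled identically---part (ii) via $\sum_n|\varphi_n|$ and the binomial median, part (i) via the collapse to at most $N+1$ values controlled by $3\max_k|T_k|$ and then BDG. Your presentation of the collapse identity $2T_{m-1}-T_m$ is slightly more explicit than the paper's, but the argument is the same.
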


\begin{proof}
Set $\D_N := \{-1,1\}^N$ and for every
$\alpha\in \D_n$, $n\in \{1,\ldots,N-1\}$ define
$\D_N(\alpha):= \{ e \in \mathbb{D}_N \colon e_1=\alpha_1,\ldots,e_n=\alpha_n\}$.
Moreover, for all $n\in \{1,\ldots,N\}$ define $\alpha_n\colon \Omega\rightarrow \{-1,1\}$ and 
$v_{n-1}\colon \Omega \rightarrow \ell^{\infty}(\mathbb{D}_N)$
by  
\equa 
      \alpha_{n}(\omega) 
&:= & \begin{cases}
             1, & \varphi_n(\omega)  > 0 \\
            -1, & \varphi_n(\omega)  \leq 0 
             \end{cases}, \\
      \hspace*{1em} 
      v_{n-1}(\omega)(e) 
&:= & e_n 1_{\mathbb{D}_N((\alpha_1(\omega),\ldots,\alpha_{n-1}(\omega)))}(e),
\tion
where we agree that $v_{0}(\omega)(e) := e_1$.
Note that $(v_n)_{n=0}^{N-1}$ is $(\calF_n^{\varphi})_{n=0}^{N-1}$-adapted. Moreover, for all $\omega \in \Omega$
it holds that 
\begin{align}
 \left\| 
  \sum_{n=1}^{N} \varphi_n(\omega)  v_{n-1}(\omega)  
 \right\|_{\ell^{\infty}(\mathbb{D}_N)} 
 =
 \sum_{n=1}^{N} |\varphi_n(\omega)|,
\end{align}
which can be exploited in the following way to obtain the second assertion of the lemma:
We let $A_n:= \{ |\varphi_n| \ge c\}$. The we get for
$N\ge 2/\eta$ that 
\equa
      \P \left (  \sum_{n=1}^{N} |\varphi_n| \ge \tfrac{c\eta N}{2} \right )
&\ge& \P \left (\sum_{n=1}^N 1_{A_n} \ge \tfrac{\eta N}{2} \right ) 
 \ge  \P \left (\sum_{n=1}^N 1_{A_n} \ge N\eta -1 \right ) \\
& = &  \sum_{N\eta-1 \le \ell \le N} \eta^\ell (1-\eta)^{N-\ell} \binom{N}{\ell} 
 \ge \frac{1}{2} 
\tion
where we use that $N\eta-1$ is less than or equal the median of the binomial distribution
\cite[Corollary 1]{Kaas:Buhrman:1980}.
On the other hand, for all $\omega \in \Omega$ it holds that
\begin{align*}
 &
 \left\| 
  \sum_{n=1}^{N} \varphi_n'(\omega)  v_{n-1}(\omega)  
 \right\|_{\ell^{\infty}(\mathbb{D}_N)} 
 \\ &
 \leq 
 \sup_{n\in \{1,\ldots,N\}} 
  \left( 
    | \alpha_1(\omega) \varphi_1'(\omega) + \ldots + \alpha_{n-1} (\omega) \varphi_{n-1}'(\omega) | 
    + |\varphi'_{n}(\omega)|
  \right)
 \\ &
  \leq 
 3 \sup_{n\in \{1,\ldots,N\}} 
    | \alpha_1(\omega) \varphi_1'(\omega) + \ldots + \alpha_n(\omega) \varphi_{n}'(\omega) |,
\end{align*}
which by the Burkholder-Davis-Gundy inequality (using that $\varphi'$ has mean zero and is 
independent of $\varphi$) leads to the first assertion of the Lemma.
\end{proof}

\subsection{Stochastic integrals and \texorpdfstring{$\gamma$-}{}radonifying operators}\label{ss:stochint}
Let $X$ be a separable Banach space,
let $(\Omega,\calF,\P,(\calF_t)_{t\in [0,\infty)})$ be a stochastic 
basis\footnote{We do not need the {\em usual conditions} on the stochastic basis in this article.},
and let $W=(W_t)_{t\ge 0}$ be an $(\calF_t)_{t\in [0,\infty)}$-Brownian motion,
i.e., a centered $\R$-valued Gaussian process such that for all $0\leq s \leq t < \infty$ 
it holds that $W_t$ is $\calF_t$-measurable, $W_t - W_s$ is independent from $\calF_s$, and $\E W_s W_t = s$.
A process $H\colon [0,\infty) \times \Omega \to X$ is called 
\emph{simple and predictable} provided that there exists a partition
$0=t_0<\cdots  < t_N< \infty$ and random variables
$v_n\in \calL^{\infty}(\calF_{t_n};X)$, $n\in \{0,\ldots,N-1\}$, such that
for all $t\in [0,\infty)$ it holds that
\[ H(t,\omega) = \sum_{n=1}^N \one_{(t_{n-1},t_n]}(t) v_{n-1}(\omega). \]
For $H\colon [0,\infty) \times \Omega \rightarrow X$ an $X$-valued 
simple predictable process we define 
the stochastic integral $\int_0^\infty H(s) dW(s)$ in the usual way and we define
\[ u_H : \calL^2((0,\infty))\times \Omega \to X 
   \sptext{1}{by}{1}
   u_H (f)(\omega) := \int_0^\infty f(t) H(t,\omega) dt. \]
Note that for all $\omega\in \Omega$ we obtain a finite rank operator $u_H(\omega):\calL^2((0,\infty))\to X$. 
Given a finite rank operator
$T:\calL^2((0,\infty))\to X$ 
one can
define the $\gamma$-radonifying norm 
$\left\| \cdot \right\|_{\gamma(\calL^2((0,\infty));X)}$ by 
\[ \| T \|_{\gamma(\calL^2((0,\infty));X)} := \left \| \sum_{n=1}^\infty \gamma_n T e_n \right \|_{\calL^2(\P',X)}, \]
where $(e_n)_{n\in \N}$ is an orthonormal basis of $\calL^2((0,\infty))$ and 
$(\gamma_n)_{n\in \N}$ is a sequence of independent standard Gaussian random variables
on some probability space $(\Omega',\calF',\P')$. 
The $\gamma$-radonifying norm is independent of the chosen 
orthonormal basis.
For more information about the $\gamma$-radonifying
norm see, for example \cite[Chapter 3]{Pisier:1989} or the survey article~\cite{Neerven:2010}. 
For the relevance of $\gamma$-radonifying norms to the definition of vector-valued 
stochastic integrals, see the definition of and results on $W_p(X)$ in Definition~\ref{def:constants} 
and Theorem~\ref{thm:stochastic_integration}
below, or see~\cite{NeerVerWeis:2015} for more details.

\subsection{Decoupling constants}
In order to state our result, Theorem \ref {thm:stochastic_integration}, we first recall 
that a random variable $f\in \calL^0((\Omega,\calF,\P);X)$ is conditionally symmetric given a sub-$\sigma$-algebra
$\calG$ provided that $\P(\{f\in B\} \cap G) = \P(\{f\in - B\} \cap G)$ for all $B\in \calB(X)$ and 
$G\in\calG$.
\smallskip

For future arguments it is convenient to provide an explicit representation of a probability space with a dyadic filtration, 
as well as an extended space on which we can explicitly define decoupled dyadic martingales:

\begin{setting}\label{setting:PW}
For $\D:=\{-1,1\}^{\N}$ let $r=(r_n)_{n\in\N}$, $r_n\colon \D \rightarrow \{-1,1\}$, satisfy $r_n(e)=e_n$. Let $\calF_\D:=\sigma(r_n:n\in \N)$ and
$(\calF_{\D,n})_{n\in\N} := (\calF_n^r)_{n\in \N}$, and
assume that $\P_{\D}$ is the probability measure on $\calF_\D$ such that
$\P_{\D}(r_1=e_1,\ldots,r_n=e_n) = 2^{-n}$ for $n\in \N$ and $e\in \D$.
Moreover, let $(\D',\calF_{\D}',\P_{\D}',(r_n')_{n\in \N},(\calF_{\D,n}')_{n\in\N})$ be a copy of 
this construction.
\end{setting}
We also introduce some constants:
\smallskip

\begin{definition}\label{def:constants}
Assume the Setting~\ref{setting:PW}, let $X$ be a separable Banach space, and let $p\in (0,\infty)$.
\medskip

\underline{$D_p(X)$:}\label{item:decoupling} 
      Let $D_p(X)\in [0,\infty]$ be the infimum over all $c\in [0,\infty]$ such that for all $N\ge 2$
      it holds that
      \[ \left \| \sum_{n=1}^N r_n v_{n-1} \right \|_{\calL^{p}(\P_\D;X)} 
         \le c  
         \left \| \sum_{n=1}^N r_n' v_{n-1}  \right \|_{\calL^{p}(\P_\D\otimes \P'_\D;X)} \] 
      for all $v_0\in X$ and $v_n:=f_n(r_1,\ldots,r_n)$ with 
      $f_n\colon \{-1,1\}^{n} \rightarrow X$, $n\in \{1,\ldots,N-1\}$.
\smallskip 

\underline{$W_p(X)$:}\label{item:stochInt} Recall the notation introduced 
      in Section~\ref{ss:stochint}. Let $W_p(X)\in [0,\infty]$ be the 
      infimum over all $c\in [0,\infty]$ such that for every
      stochastic basis $(\Omega,\calF,\P,(\calF_t)_{t\in [0,\infty)})$,
      every $(\calF_t)_{t\in [0,\infty)}$-Brownian motion $W$, 
      and every $(\calF_t)_{t\in [0,\infty)}$-simple predictable process $H\colon [0,\infty)\times \Omega\rightarrow X$
      one has that 
      \[ \left \| \int_0^\infty H(s) dW(s) \right \|_{\calL^{p}(\P;X)} 
         \le c \left \|  \| u_H \|_{\gamma(\calL^2((0,\infty));X)} \right \|_{\calL^{p}(\P)} . \]

\underline{$\umd_p^{+,s}(X)$:}\label{item:squarefnc}
      Let $\umd_p^{+,s}(X)\in [0,\infty]$ be the 
      infimum over all $c\in [0,\infty]$ such that for
      every stochastic basis $(\Omega,\calF,\P,(\calF_n)_{n\in \N})$ 
      and every finitely supported
      sequence of $X$-valued random variables $(d_n)_{n=1}^{\infty}$ 
      such that $d_n \in \calL^p(\calF_n;X)$ 
      and $d_n$ is $\calF_{n-1}$-conditionally 
      symmetric for all $n\in \N$ it holds that
      \begin{align}\label{eq:BDG}	
	  \left\|
	    \sum_{n=1}^{\infty}
	      d_n
	  \right\|_{\calL^p(\P;X)}
      \leq
      c
	\left\|
	  \sum_{n=1}^{\infty} 
	    r_n d_n
	\right\|_{\calL^p(\P\otimes \P_{\D};X)}.
      \end{align}
\medskip      
      
\underline{$H_p(X)$:}\label{item:HaarDalt} Let $(h_n)_{n\in \N}$ be the Haar system 
      for $\calL^2((0,1])$ with $\esssup(|h_n|)=1$,
      and let $H_p(X)\in [0,\infty]$ be the 
      infimum over all $c\in [0,\infty]$ such that for
      all finitely supported sequences $(x_n)_{n\in \N}$ in $X$
      one has that
      \[          \left \| \sum_{n=1}^\infty  h_n  x_n \right \|_{\calL^{p}((0,1]);X)} 
         \le c  \left \| \sum_{n=1}^\infty r_n h_n x_n \right \|_{\calL^{p}((0,1]\times \D;X)}. \]
 
\underline{$|H|_p(X)$:}\label{item:HaarD} Let $(h_n)_{n\in \N}$ be as in the definition of $H_p(X)$,
      and let $|H|_p(X)\in [0,\infty]$ be the 
      infimum over all $c\in [0,\infty]$ such that for 
      all finitely supported sequences $(x_n)_{n\in \N}$ in $X$
      one has that
      \[          \left \| \sum_{n=1}^\infty  h_n  x_n \right \|_{\calL^{p}((0,1];X)} 
         \le c  \left \| \sum_{n=1}^\infty r_n |h_n| x_n \right \|_{\calL^{p}((0,1]\times \D;X)}. \]
\end{definition}

\begin{remark}\label{remark:PW}
Note that the processes $(r_n v_{n-1})_{n=1}^{N}$ considered in the definition of $D_p(X)$
above are precisely the $X$-valued dyadic martingales.
\end{remark}

\begin{remark}\rm 
\label{remark:Dp(X)}
Theorem \ref{thm:simple_decoupling_of_vs_full_decoupling} 
(see also Theorem~\ref{thm:simple_decoupling_of_vs_full_decoupling_intro})
and Theorem \ref{thm:char_p-ext_Rademacher}
imply that for a stochastic basis 
$(\Omega,\calF,\P,(\calF_n)_{n=0}^\infty)$, independent standard Gaussian random variables $(g_n)_{n=1}^N$ such that
$g_n$ is $\calF_n$-measurable, but independent from $\calF_{n-1}$,  and 
$v_n \in \calL^p((\Omega,\calF_n,\P);X)$, $n\in \N$, one has
for all $N\in \N$ that
\begin{equation}\label{eqn:decoupling_gauss}
    \left \| \sum_{n=1}^N g_n v_{n-1} \right \|_{\calL^p(\P;X)}
\le D_p(X) \left \| \sum_{n=1}^N g'_n v_{n-1} \right \|_{\calL^p(\P\otimes\P';X)},
\end{equation}
where  $(g'_n)_{n=1}^{N}$ are independent standard Gaussian random variables defined on an auxiliary 
probability space $(\Omega',\calF',\P')$.
\end{remark} 
\smallskip

The aim of this section is to verify the following two theorems.
The relation between the constants introduced in Definition~\ref{def:constants} is given by the first theorem:
\medskip

\begin{theorem}
\label{thm:stochastic_integration}
Let $X$ be a separable Banach space and $p\in (0,\infty)$.
\begin{enumerate}
\item \label{item:extrapolation} 
      If $D_p(X)<\infty$, 
      then $D_q(X)<\infty$ for all $q\in (0,\infty)$. 
\item \label{item:equiv_constants} 
      If $K_{p,2}$ is the constant in the $L^p$-to-$L^2$ Kahane-Khintchine inequality, then
       \[ 
               W_p(X) 
          \leq K_{p,2} D_p(X).
        \]
        Conversely, if $W_p(X) <\infty$, then $D_p(X)<\infty$.
\item \label{item:equal_constants} 
      $ D_p(X) = \umd_p^{+,s}(X) = H_p(X) = |H|_p(X)$.
\end{enumerate}
\end{theorem}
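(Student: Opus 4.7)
The plan is to prove assertions (i), (ii), (iii) in sequence, with part (i) being a direct citation to the appendix and the bulk of the argument going into the forward direction of (ii) and the equivalences in (iii).

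For part (i), I would invoke Proposition~\ref{prop:extrapolation_decoupling_dyadic_martingales} in Appendix~\ref{sec:extrapolation}, which extrapolates dyadic decoupling between different $\calL^p$-scales.

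For part (ii), consider first $W_p(X)\le K_{p,2}D_p(X)$. Take a simple predictable $H=\sum_{n=1}^N 1_{(t_{n-1},t_n]}v_{n-1}$ and write $g_n:=W(t_n)-W(t_{n-1})$; these are independent Gaussians with $g_n$ independent of $\calF_{t_{n-1}}$ and $v_{n-1}$ being $\calF_{t_{n-1}}$-measurable. The key observation is Remark~\ref{remark:Dp(X)}: since Gaussian laws lie in $\calP_{2\textnormal{-ext}}^{\textnormal{Rad}}$ by Theorem~\ref{thm:char_p-ext_Rademacher}, Theorem~\ref{thm:simple_decoupling_of_vs_full_decoupling_intro} guarantees
\begin{equation*}
 \Bigl\| \sum_{n=1}^{N} g_n v_{n-1} \Bigr\|_{\calL^p(\P;X)}
 \le
 D_p(X)
 \Bigl\| \sum_{n=1}^{N} g_n' v_{n-1} \Bigr\|_{\calL^p(\P\otimes\P';X)}.
\end{equation*}
Applying Kahane--Khintchine pointwise in $\omega$ to the Gaussian sum $\sum_n g_n'v_{n-1}(\omega)$ bounds the right-hand side above by $K_{p,2}\|\|u_H\|_\gamma\|_{\calL^p(\P)}$, concluding this direction. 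For the converse $W_p(X)<\infty \Rightarrow D_p(X)<\infty$, by part (i) it suffices to treat $p\ge 2$. The plan is to run the chain in reverse: $W_p$ supplies a Gaussian decoupling inequality (using Kahane--Khintchine to pass from the $\gamma$-norm to an $\calL^p$ Gaussian sum), and one then notes that $W_p(X)<\infty$ forces $X$ to have finite cotype. This can be seen by transferring Lemma~\ref{lemma:upper_decoupling_finite_cotype} to the stochastic-integral setting, since the construction there yields $\|\int H\,dW\|_{\calL^p}\gtrsim N$ while a short computation shows $\|u_H\|_\gamma$ scales only like $\sqrt N$ in $\ell^\infty_{2^N}$. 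Once $X$ has finite cotype, Rademacher and Gaussian sums in $\calL^p(\P;X)$ are equivalent (Maurey--Pisier), so the Gaussian decoupling transfers to $D_p(X)<\infty$; this direction is classical, cf.\ \cite{CoxVeraar:2011,vanNeervenVeraarWeis:2007}.

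For part (iii), the identity $H_p(X)=|H|_p(X)$ is immediate because $(r_nh_n)_n \stackrel{d}{=} (r_n|h_n|)_n$ by symmetry of the Rademachers. The identity $D_p(X)=|H|_p(X)$ follows from the Paley--Walsh correspondence between finitely supported Haar expansions on $(0,1]$ and dyadic martingales on $\D$, under which the decoupled Haar sum $\sum r_n|h_n|x_n$ corresponds to the decoupled dyadic martingale. For $D_p(X)\le\umd_p^{+,s}(X)$, apply $\umd_p^{+,s}$ to the conditionally symmetric sequence $d_n:=r_nv_{n-1}$ on $\D$; introducing an auxiliary independent Rademacher sequence $(\tilde r_n)$, the product $(\tilde r_n r_n)_n$ is again an iid Rademacher sequence, this time independent of $(r_n)_n$, and so may be identified in law with an independent copy $(r_n')_n$, recovering the decoupling inequality that defines $D_p(X)$. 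Conversely, for $\umd_p^{+,s}(X)\le D_p(X)$, approximate a general conditionally symmetric martingale difference sequence by dyadic martingale differences via Theorem~\ref{thm:factor} -- which factors each $d_n$ through the $\calF_{n-1}$-measurable data and an independent uniform random variable -- then discretize the uniform variable dyadically and pass to the $\calL^p$-limit using the uniform $\calL^p$-integrability afforded by the finitely supported assumption.

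The main obstacles will be: in (ii), the finite-cotype step in the converse direction requires transferring Lemma~\ref{lemma:upper_decoupling_finite_cotype} from the Rademacher to the $\gamma$-radonifying setting; in (iii), the approximation step for $\umd_p^{+,s}(X)\le D_p(X)$ must preserve conditional symmetry and $\calL^p$-control on partial sums at every stage of the discretization, which is precisely what Theorem~\ref{thm:factor} is designed for.
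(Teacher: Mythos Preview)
Your plan for (i), the forward bound in (ii), and the equalities $H_p(X)=|H|_p(X)=D_p(X)$ together with $D_p(X)\le \umd_p^{+,s}(X)$ all match the paper's proof (the paper routes $D_p\le \umd_p^{+,s}$ through $H_p\le \umd_p^{+,s}$, but your direct argument with $d_n=r_nv_{n-1}$ and the sign-absorption $(\tilde r_n r_n)\stackrel{d}{=}(r_n')$ is equally valid). Two issues remain, one minor and one substantive.

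\textbf{Minor.} In the converse of (ii), the sentence ``by part (i) it suffices to treat $p\ge 2$'' is logically misplaced: part (i) extrapolates $D_q$, not $W_q$, and you are only \emph{given} $W_p(X)<\infty$ for one fixed $p$. You therefore have to argue directly from $W_p(X)<\infty$ (for that $p$) to finite cotype and then to $D_p(X)<\infty$. The finite-cotype step via Lemma~\ref{lemma:upper_decoupling_finite_cotype} is correct in spirit; the paper then invokes \cite[Theorem~2.2]{Veraar:2007} (together with the extension of \cite[Proposition~9.14]{Ledoux:Talagrand:1991} to $r\in(0,1)$ by Khintchine) rather than re-deriving the Gaussian-to-Rademacher passage by hand.

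\textbf{Substantive gap.} Your proposed argument for $\umd_p^{+,s}(X)\le D_p(X)$ does not close. Factoring $d_n=d_n^0(\cdot,H_n)$ via Theorem~\ref{thm:factor} and then dyadically discretising $H_n$ replaces the single increment $d_n$ by a block of $m$ dyadic increments $\Delta_{n,1},\dots,\Delta_{n,m}$. Applying $D_p(X)$ to this refined dyadic martingale bounds $\|\sum_n d_n\|_p$ by $D_p(X)\,\|\sum_{n,k} r_{n,k}'\,w_{n,k-1}\|_p$, where the $r_{n,k}'$ are \emph{all} independent. This is the decoupled tangent of the refined sequence, not of the original one, and there is no general inequality relating it to $\|\sum_n r_n'\,d_n\|_p$. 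This is precisely the blocking obstruction \textbf{(P1)} flagged in the introduction: the decoupled version of a block sum is not the block sum of the decoupled versions.

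The paper's proof bypasses this by using conditional symmetry directly to manufacture a \emph{single} Rademacher per step. After reducing to simple $d_n$ that avoid $0$ (Lemma~\ref{lem:symmapprox} plus a small perturbation), one splits each $\calF_{n-1}^d$-atom $A$ into $A^+,A^-\in\calF_n^d$ of equal mass with $\calL(d_n\mid A^+)=\calL(-d_n\mid A^-)$, sets $\rho_n:=\pm 1$ on $A^\pm$ and $v_n:=\rho_n d_n$. Then $\rho_n$ is a genuine Rademacher independent of $\calF_{n-1}^d\vee\sigma(v_n)$, so the conditional law of $d_n=\rho_n v_n$ given that $\sigma$-algebra lies in $\calP^{\textnormal{Rad}}(X)$, and Theorem~\ref{thm:simple_decoupling_of_vs_full_decoupling_intro} yields $\|\sum \rho_n v_n\|_p\le D_p(X)\|\sum r_n'v_n\|_p$. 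The final identity $\|\sum r_n'v_n\|_p=\|\sum r_n'\rho_n v_n\|_p=\|\sum r_n' d_n\|_p$ is exactly your sign-absorption trick. If you want to keep Theorem~\ref{thm:factor} in the picture, note that conditional symmetry lets you choose $d_n^0(\omega,h)=-d_n^0(\omega,1-h)$, and then $\rho_n:=\operatorname{sgn}(H_n-\tfrac12)$ together with $v_n:=d_n^0(\omega,\max(H_n,1-H_n))$ reproduces the same decomposition without any discretisation.
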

\medskip

The second theorem states that, in a sense, $D_{p}(X)$ is minimal for all $p\in [2,\infty)$.
To state this theorem we introduce the following notation (see also Example~\ref{example:oneDlaw}): 
for $\nu \in \calP(\R)$ we define 
\[ 	
  \calP(\nu,X) := 
  \big\{ 
    \mu \in \calP(X) 
    \colon 
      \exists x\in X \colon \mu (\cdot) 
      = 
      \nu\big(\{r\in \R : rx \in \cdot \} \big)
      \big\}. \]  
\smallskip
\begin{theorem}
\label{thm:D_2(X)_minimal}
Let $X$ be a separable Banach space, $p\in [2,\infty)$, let $\mu\in \calP_p(\R)$ satisfy $\int_\R r d\mu(r)=0$, 
$\sigma^2:=\int_{\R}|r|^2\,d\mu(r)\in (0,\infty)$,
and let $\gamma \in \calP(\R)$ be the standard 
Gaussian law. 
Then the following holds:
\begin{enumerate}
\item \label{it:gamma_in_all} $\calP(\gamma,X) \subseteq  (\calP(\mu,X))_{p\textnormal{-ext}}$. 
\item \label{it:D_2(X)_minimal}
      If there exist $q\in (0,p]$, $c\in (0,\infty)$ such that for every probability space $(\Omega,\calF,\P)$,
      every finitely supported sequence of independent random variables $\varphi = (\varphi_n)_{n\in \N}$ satisfying 
      $\calL(\varphi_n)\in \calP(\mu,X)$, and every 
      $ A_n \in \calF_{n}^{\varphi}$, $n\in \N_0$, it holds that
      \begin{equation}\label{eqn:thm:D_2(X)_minimal}
         \left \| \sum_{n=1}^\infty \varphi_n  1_{A_{n-1}} \right \|_{\calL^{q}(\P;X)}
         \le c  \left \| \sum_{n=1}^\infty \varphi'_n  1_{A_{n-1}} \right \|_{\calL^{q}(\P;X)}
      \end{equation} 
      where $(\varphi'_n)_{n\in \N}$ is an independent copy
      of $(\varphi_n)_{n\in \N}$, then $D_{q}(X)<\infty$.
\end{enumerate}
\end{theorem}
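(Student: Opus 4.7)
The plan is to handle the two parts separately: (i) via the central limit theorem with convergence of $p$-th absolute moments, and (ii) by routing the $\calP(\mu,X)$-decoupling hypothesis through Gaussian decoupling and then through the stochastic-integration characterization of $D_q(X)$.

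For part (i), I would fix $x\in X$ and let $(\xi_k)_{k\in\N}$ be iid with law $\mu$ on an auxiliary space. Set $K_j:=j$ and $\mu_{j,k}:=\calL\bigl(\xi_k x/(\sigma\sqrt{j})\bigr)$; since $x/(\sigma\sqrt{j})\in X$, each $\mu_{j,k}$ lies in $\calP(\mu,X)$. The convolution $\mu_{j,1}*\cdots*\mu_{j,j}$ is the law of $(x/(\sigma\sqrt{j}))\,S_j$ with $S_j=\sum_{k=1}^{j}\xi_k$, and the classical CLT gives $\mu_{j,1}*\cdots*\mu_{j,j}\stackrel{w^*}{\to}\calL(\gamma x)$. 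The one nontrivial input is uniform $\calL^p$-integrability of the normalized partial sums $S_j/(\sigma\sqrt{j})$: for $p\ge 2$ and $\E|\xi_1|^p<\infty$, a truncation argument combined with the Marcinkiewicz--Zygmund inequality yields $\E|S_j/(\sigma\sqrt{j})|^p\to\E|\gamma|^p$, and then Lemma~\ref{lem:conv_p_moments_vs_LpUI} converts this moment convergence into uniform $\calL^p$-integrability. Hence $\calL(\gamma x)\in(\calP(\mu,X))_{p\textnormal{-ext}}$.

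For part (ii), I would apply Theorem~\ref{thm:simple_decoupling_of_vs_full_decoupling_intro} with $\Delta$ a singleton, $S=T=I_X$, $\underline{\Psi}(\xi)=\xi^q$, $\overline{\Psi}(\xi)=c^q\xi^q$, and $\calP=\calP(\mu,X)$. The growth bound $\sup_\xi(1+\xi)^{-p}\xi^q<\infty$ holds because $q\le p$, semi-continuity is trivial, and $\delta_0\in\calP(\mu,X)$ (take $x=0$); the hypothesis~\eqref{eqn:thm:D_2(X)_minimal} is then precisely condition~\eqref{item:simple_decoupling_intro}, yielding the general upper decoupling~\eqref{item:general_decoupling_intro} at level $q$ for every $(d_n)\in\adapt_p(\ldots;X,\calP(\mu,X)_{p\textnormal{-ext}})$. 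Combining with part (i) and Example~\ref{example:decoupled_sequence}, I obtain the Gaussian decoupling
\[
 \Bigl\|\sum_n g_n v_{n-1}\Bigr\|_{\calL^q(\P;X)}\le c\,\Bigl\|\sum_n g'_n v_{n-1}\Bigr\|_{\calL^q(\P\otimes\P';X)}
\]
whenever $(g_n)$ is iid standard Gaussian independent of $\calF_{n-1}$ and $v_{n-1}\in\calL^p(\calF_{n-1};X)$.

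To convert this into $W_q(X)<\infty$, I would take an arbitrary stochastic basis carrying a Brownian motion $W$ and a simple predictable $H=\sum_n 1_{(t_{n-1},t_n]}v_{n-1}$, set $\Delta t_n:=t_n-t_{n-1}$, $G_n:=(W(t_n)-W(t_{n-1}))/\sqrt{\Delta t_n}$, and $\tilde v_{n-1}:=\sqrt{\Delta t_n}\,v_{n-1}$. In the discrete filtration $\tilde\calF_n:=\calF_{t_n}$ the Gaussian decoupling applied to $(G_n\tilde v_{n-1})$ gives $\|\int H\,dW\|_{\calL^q}\le c\|\sum_n G'_n\tilde v_{n-1}\|_{\calL^q}$. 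Conditioning on $\tilde v$, the right-hand side is a pure $X$-valued Gaussian sum with deterministic coefficients $\sqrt{\Delta t_n}\,v_{n-1}(\omega)$, so the Kahane--Khintchine inequality for Gaussians bounds $\bigl(\E_{G'}\|\sum_n G'_n\tilde v_{n-1}\|^q\bigr)^{1/q}\le K_{q,2}\,\|u_H(\omega)\|_{\gamma(\calL^2((0,\infty));X)}$ pointwise in $\omega$. Integrating in $\omega$ delivers $W_q(X)\le c K_{q,2}<\infty$, and the converse direction of Theorem~\ref{thm:stochastic_integration}~\eqref{item:equiv_constants} then yields $D_q(X)<\infty$. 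The main obstacle I foresee is the moment-convergence input in part (i): bare $\calL^p$-boundedness of $S_j/\sqrt{j}$ does not by itself force uniform $\calL^p$-integrability, so the truncation/Marcinkiewicz--Zygmund route must be executed carefully when $\mu$ has only $p$ finite absolute moments.
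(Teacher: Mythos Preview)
Your proposal is correct and follows essentially the same route as the paper: for (i) the paper also uses the CLT on normalized partial sums together with convergence of the $p$-th absolute moments (citing \cite{Brown:1970} rather than Marcinkiewicz--Zygmund) and then Lemma~\ref{lem:conv_p_moments_vs_LpUI}; for (ii) the paper likewise applies Theorem~\ref{thm:simple_decoupling_of_vs_full_decoupling_intro} to obtain the Gaussian decoupling~\eqref{eqn:decoupling_gauss}, passes to $W_q(X)<\infty$ via the Kahane--Khintchine argument in the proof of Theorem~\ref{thm:stochastic_integration}\eqref{item:equiv_constants}, and then invokes the converse direction of that theorem. Your explicit write-up of the $W_q$ step and your flagging of the moment-convergence subtlety are both apt.
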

\smallskip

\begin{proof}
\eqref{it:gamma_in_all}  
Let $(\xi_n)_{n\in\N}$ be a sequence of independent, $\mu$-distributed random 
variables, and let $\mu_n := \calL((\sigma \sqrt{n})^{-1}\sum_{k=1}^{n} \xi_k)$.
Observe that $\calL((\sigma \sqrt{n})^{-1}\xi_1) \in \calP(\mu,\R)$.
Moreover, it follows from e.g.~\cite[Theorem 5]{Brown:1970} that $\mu_n \stackrel{w^*}{\rightarrow} \gamma$
and that $\int_{\R} |r|^p \,d\mu_n(r) \rightarrow \int_{\R} |r|^p \,d\gamma(r)$. It thus follows 
from Lemma~\ref{lem:conv_p_moments_vs_LpUI}
that $\gamma \in (\calP(\mu,\R))_{p\textnormal{-ext}}$ and 
hence $\calP(\gamma, X) \subseteq (\calP(\mu,X))_{p\textnormal{-ext}}$.
\medskip

\eqref{it:D_2(X)_minimal}
Observe that $\gamma \in (\calP(\mu,\R))_{p\textnormal{-ext}}$ implies
$\gamma \in (\calP(\mu,\R))_{q\textnormal{-ext}}$ for all $q\in (0,p]$.
Applying Theorem~\ref{thm:simple_decoupling_of_vs_full_decoupling} (Theorem~\ref{thm:simple_decoupling_of_vs_full_decoupling_intro})
implies inequality \eqref{eqn:decoupling_gauss} with $p$ replaced by $q$ and $D_p(X)$ replaced by $c$ from inequality 
$\eqref{eqn:thm:D_2(X)_minimal}$. The part (ii) of the proof below of Theorem \ref{thm:stochastic_integration}
gives exactly $W_q(X)<\infty$, and applying Theorem \ref{thm:stochastic_integration} once more (this time directly)
gives $D_q(X)<\infty$.
\end{proof}
\medskip

For the proof of Theorem~\ref{thm:stochastic_integration}
we shall use the following lemma.  

\begin{lemma}\label{lem:symmapprox}
Let $(\Omega,\calF,\P)$ be a probability space, let
$X$ be a separable Banach space, let $p\in (0,\infty)$, 
let $\calG\subseteq \calF$ be a $\sigma$-algebra, and 
let $f\in L^p(\calF;X)$ be $\calG$-conditionally symmetric. 
Then there exists a sequence of $\calG$-conditionally 
symmetric $\calF$-simple functions $(f_n)_{n\in \N}$
such that 
$\lim_{n\rightarrow \infty} \| f - f_n \|_{\calL^p(\P;X)} = 0$.
\end{lemma}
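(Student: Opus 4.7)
The plan is to compose $f$ with a Borel measurable odd map
$\psi_n\colon X\to X$ (i.e.\ $\psi_n(-x) = -\psi_n(x)$) of countable image
satisfying $\|\psi_n(x)-x\|_X \le 1/n$, and then to truncate $\psi_n(f)$ to
finitely many values. Oddness of $\psi_n$ will transfer the $\calG$-conditional symmetry
of $f$ to $\psi_n(f)$, and the uniform $1/n$ bound will yield $\calL^p$-approximation.

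To construct $\psi_n$, pick a countable family $(\phi_k)_{k\in\N}\subseteq X'$
(available by Hahn--Banach and separability of $X$) that separates the points of $X$, and set
\[ A^+ := \bigl\{ x\in X\setminus\{0\}\colon \phi_{k(x)}(x)>0,\ k(x):=\min\{k\colon \phi_k(x)\neq 0\}\bigr\}. \]
Then $X = A^+\sqcup(-A^+)\sqcup\{0\}$ is a disjoint Borel decomposition.
Cover $A^+$ by countably many open balls of radius $1/(2n)$ centred at a countable dense subset of $X$,
intersect with $A^+$ and disjointify to obtain a Borel partition $(C_{n,k})_{k\in\N}$ of $A^+$
with $\mathrm{diam}(C_{n,k})\le 1/n$.
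Choose a point $z_{n,k}\in C_{n,k}$ whenever $C_{n,k}\neq\emptyset$ and set
$\psi_n(0):=0$, $\psi_n|_{C_{n,k}}:=z_{n,k}$, $\psi_n|_{-C_{n,k}}:=-z_{n,k}$.
Then $\psi_n$ is Borel, odd and $\|\psi_n(x)-x\|_X\le 1/n$ for every $x\in X$.

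Since $\psi_n$ is odd we have $\psi_n^{-1}(-B) = -\psi_n^{-1}(B)$ for every $B\in\calB(X)$;
combined with conditional symmetry of $f$ this yields, for every $G\in\calG$,
\[
  \P(\psi_n(f)\in B,\,G)
  = \P(f\in \psi_n^{-1}(B),\,G)
  = \P(f\in -\psi_n^{-1}(B),\,G)
  = \P(\psi_n(f)\in -B,\,G),
\]
so $\psi_n(f)$ is $\calG$-conditionally symmetric. To pass from countable to finite image,
observe that $\psi_n(f)$ is supported in the symmetric set $\{0\}\cup\{\pm z_{n,k}\colon k\in\N\}$.
For $N_n\in\N$ to be chosen, the set $S_n:=\{0,\pm z_{n,1},\ldots,\pm z_{n,N_n}\}$ is symmetric,
so $x\mapsto x\,1_{\{x\in S_n\}}$ is odd; hence
$f_n := \psi_n(f)\,1_{\{\psi_n(f)\in S_n\}}$ is $\calF$-simple and, by the same calculation,
$\calG$-conditionally symmetric.

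For the $\calL^p$-convergence, the pointwise bound $\|f-\psi_n(f)\|_X\le 1/n$ yields
$\|f-\psi_n(f)\|_{\calL^p(\P;X)}\le 1/n$. Moreover
$\|\psi_n(f)-f_n\|_X \le (\|f\|_X+1)\,1_{\{\psi_n(f)\notin S_n\}}$, whose $\calL^p$-norm
tends to $0$ as $N_n\to\infty$ (for fixed $n$) by dominated convergence, since the events
$\{\psi_n(f)\notin S_n\}$ decrease to a null set and $(\|f\|_X+1)^p\in\calL^1(\P)$.
Choosing $N_n$ large enough to make this term at most $1/n$ gives $\|f-f_n\|_{\calL^p(\P;X)}\le 2/n\to 0$.
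I expect the main subtlety to be arranging the odd Borel decomposition
$X = A^+\sqcup(-A^+)\sqcup\{0\}$ used to define $\psi_n$; the symmetry-preservation
computation and the truncation step are then essentially automatic.
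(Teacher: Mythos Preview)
Your proof is correct, but it takes a different route from the paper's. Both arguments rest on the same observation: if $\psi\colon X\to X$ is an odd Borel map, then $\psi(f)$ inherits $\calG$-conditional symmetry from $f$. The difference is in how the odd approximating map is produced.

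The paper starts from an \emph{arbitrary} $\sigma(f)$-simple approximation $g_n = G_n(f)$ (where $G_n = \sum_k x_{n,k} 1_{B_{n,k}}$ is Borel with finite image) and then symmetrizes it algebraically:
\[
  f_n := \tfrac12\bigl(G_n(f) - G_n(-f)\bigr)
  = \tfrac12 \sum_{k} x_{n,k}\bigl(1_{\{f\in B_{n,k}\}} - 1_{\{-f\in B_{n,k}\}}\bigr).
\]
The map $y\mapsto \tfrac12(G_n(y)-G_n(-y))$ is odd with finite image, so $f_n$ is simple and conditionally symmetric; the $\calL^p$-error is then controlled using $\calL(f)=\calL(-f)$, which gives $\|f-f_n\|_{\calL^p}\le 2^{(1/p-1)^+}\|f-g_n\|_{\calL^p}$. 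No geometric decomposition of $X$ is needed.

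You instead construct the odd map $\psi_n$ \emph{directly}, via the Borel ``hemisphere'' decomposition $X = A^+\sqcup(-A^+)\sqcup\{0\}$ obtained from a separating sequence of functionals, followed by a fine Borel partition of $A^+$. This buys you the clean pointwise bound $\|\psi_n(x)-x\|_X\le 1/n$ (so the approximation is uniform, not just in $\calL^p$), at the cost of the extra construction. The paper's symmetrization trick is shorter and avoids the hemisphere step entirely, but your argument is more explicit and the uniform bound may be of independent use.
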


\begin{proof}
Let $(g_n)_{n\in \N}$ be a sequence of 
$\sigma(f)$-simple functions 
such that 
$\lim_{n\rightarrow \infty} \| f - g_n \|_{L^p(\P;X)} = 0$.
For $n\in \N$ let $m_n\in \N$
and 
$B_{n,k} \in \sigma(f)$,
$x_{n,k} \in X$,
$k\in \{1,\ldots,m_n\}$,
be
such that 
$ 
g_n
=
\sum_{k=1}^{m_n}
x_{n,k}
1_{\{ f \in B_{n,k}\}}.
$ 
Define, for $n\in \N$,
$ 
  f_n
  =
  \inv{2}
  \sum_{k=1}^{m_n}
  x_{n,k}
  (
    1_{ \{f \in B_{n,k}\}}
    -
    1_{ \{- f \in B_{n,k}\}}
  )
$
and observe that $f_n$
is $\calG$-conditionally
symmetric because $f$
is $\calG$-conditionally
symmetric. Moreover, the conditional symmetry of $f$ implies that
$\calL(f) = \calL(-f)$, whence 
\begin{align*}
& \left\|
    f - f_n 
 \right\|_{\calL^p(\P;X)}
\\
  &
 =
 \Bigg\| 
  \tinv{2}
  \Bigg(
  f 
  - 
  \sum_{k=1}^{m_n}
  x_{n,k}1_{ \{f \in B_{n,k}\}}
  \Bigg)
  -
  \tinv{2}
  \Bigg(
  -f
  - 
  \sum_{k=1}^{m_n}
  x_{n,k}1_{\{ - f \in B_{n,k}\}}
  \Bigg)
  \Bigg\|_{\calL^p(\P;X)}
\\
  & \leq
   2^{(\frac{1}{p}-1)^+}
   \Bigg\|
  f 
  - 
  \sum_{k=1}^{m_n}
  x_{n,k}1_{ \{f \in B_{n,k}\}}
  \Bigg\|_{\calL^p(\P;X)}
  = 
   2^{(\frac{1}{p}-1)^+}
  \| f - g_n\|_{\calL^p(\P;X)}.
\end{align*}
\end{proof}

\begin{proof}[Proof of Theorem~\ref{thm:stochastic_integration}]
Part~\eqref{item:extrapolation} is the assertion of Proposition \ref{prop:extrapolation_decoupling_dyadic_martingales}
below.
\par \medskip 

Part \eqref{item:equiv_constants}:
First we check $W_p(X) \leq K_{p,2} D_p(X)$.
For $0=t_0 < \cdots < t_N < \infty$ inequality \eqref{eqn:decoupling_gauss} gives 
\[ \left \| \sum_{n=1}^N (W_{t_n}-W_{t_{n-1}}) v_{n-1} \right \|_{\calL^p(\P;X)}
   \le D_p(X)  \left \| \sum_{n=1}^N (W'_{t_n}-W'_{t_{n-1}}) v_{n-1} \right \|_{\calL^p(\P\otimes \P';X)} \]
for all $\calL_p$-integrable and $\calF_{t_{n-1}}$-measurable random variables $v_{n-1}: \Omega\to X$
where $(W'_t)_{t\ge 0}$ is a Brownian motion defined on an auxiliary basis 
$(\Omega',\calF',\P',(\calF_t')_{t\in [0,\infty)})$.
Exploiting the Kahane-Khintchine inequality gives that
\begin{multline*}
       \left \| \sum_{n=1}^N (W'_{t_n}-W'_{t_{n-1}}) v_{n-1} \right \|_{\calL^p(\P\otimes \P';X)} \\
   \le K_{p,2} 
       \left ( \int_{\Omega} \left ( \int_{\Omega'}
       \left \| \sum_{n=1}^N (W'_{t_n}(\omega')-W'_{t_{n-1}}(\omega')) v_{n-1}(\omega) \right \|_X^2 d\P'(\omega') 
       \right )^\frac{p}{2} d\P(\omega) \right )^\frac{1}{p}.
\end{multline*}
For $H := \sum_{n=1}^{N} 1_{(t_{n-1},t_n]}v_{n-1}$ the result follows by the known relation
\[ \left ( \int_{\Omega'}
       \left \| \sum_{n=1}^N (W'_{t_n}(\omega')-W'_{t_{n-1}}(\omega')) v_{n-1}(\omega) \right \|_X^2 d\P'(\omega') 
       \right )^\frac{1}{2} = \| u_H(\omega) \|_{\gamma(\calL^2((0,\infty));X)}. \]
Conversely, let us assume that $W_p(X)<\infty$. Then Lemma \ref{lemma:upper_decoupling_finite_cotype} implies that $X$ has finite cotype. 
Thus the proof of \cite[Theorem 2.2]{Veraar:2007} guarantees that $D_p(X)<\infty$.
Here we exploit the fact that \cite[Proposition 9.14]{Ledoux:Talagrand:1991}
works (in their notation) with the parameter $r\in (0,1)$ as well: one starts
on the left-hand side with $\calL^r$, estimates this by $\calL^1$, applies
\cite[Proposition 9.14]{Ledoux:Talagrand:1991}, and uses \cite[Proposition 4.7]{Ledoux:Talagrand:1991}  
(Khintchine's inequality for a vector valued Rademacher series) to change $\calL^1$ back to $\calL^r$ on the right-hand side.
\par \medskip

Part \eqref{item:equal_constants} is divided into several steps:
\medskip

{\sc Proof} of $H_p(X) = |H|_p(X)$:
This is immediate as, in the notation of Definition \ref{def:constants}, the sequences 
$(r_nh_n)_{n=1}^N$ and $(r_n|h_n|)_{n=1}^N$ have the same distribution for all $N\in \N$.
\par\medskip

{\sc Proof} of $H_p(X) \leq \umd_p^{+,s}(X)$:
This inequality follows by taking $d_n = h_n x_n$, 
and $\calF_n = \sigma(h_0,\ldots,h_n)$, $n\in \N_0$,
in the definition of $\umd_p^{+,s}(X)$.
\par \medskip

{\sc Proof} of  $D_p(X) = |H|_p(X)$: We use 
the following standard construction.
Let $N\in \{2,3,\ldots\}$, let $\Delta_N := \{(n,k): n=0,\ldots,N-1, k=0,\ldots,2^n-1 \}$,
$A_{0,0}:=\D$, and let
\begin{align*}
  A_{n,k}
  & :=
  \Bigg\{ 
    \sum_{j=1}^{n}
     (r_j+1) 2^{ j-2 }
    =
    k
  \Bigg\}
  \in \calF_{\D,n} \sptext{1}{for}{1} 
    (n,k) \in \Delta_N\setminus\{(0,0)\}.
\end{align*}
If we set $h_{n,k} := r_{n+1} 1_{A_{n,k}}$
for $(n,k) \in \Delta_N$, then $(h_{n,k})_{(n,k)\in \Delta_N}$ has the same distribution as
$(h_{\ell})_{\ell=1}^{2^N-1}$.
Let $v_0\in X$ and $f_n\colon \{-1,1\}^n\rightarrow X$, $n\in \{1,\ldots,N-1\},$ be given
and let $v_n\colon \D \rightarrow X$, $n\in \{1,\ldots,N-1\}$ satisfy $v_{n}=f_n(r_1,\ldots,r_n)$.
Let $x_{n,k}\in X$ be the value of 
$v_n$ on $A_{n,k}$ for $(n,k)\in \Delta_N$. Then one has 
\begin{equation}\label{eq:equiv_HD}
 \sum_{n=1}^N r_n v_{n-1}
    = \sum_{(n,k)\in \Delta_N} h_{n,k} x_{n,k} 
    \quad \textnormal{and} \quad 
 \sum_{n=1}^N r'_n v_{n-1}
    = \sum_{n=0}^{N-1} \sum_{k=0}^{2^n-1} r'_{n+1} |h_{n,k}| x_{n,k}. 
\end{equation}
Next, observe that $(r'_{n+1}|h_{n,k}|)_{(n,k)\in \Delta_N}$ and $(r'_{n,k}|h_{n,k}|)_{(n,k)\in \Delta_N}$ 
have the same distribution, where 
$(r'_{n,k})_{(n,k)\in \Delta_N}$ is  a Rademacher system on an auxiliary probability space. 
This implies that $D_p(X) \leq |H|_p(X)$. Similarly, given $(x_{n,k})_{(n,k)\in \Delta_N}$,
we can construct $(v_n)_{n=0}^{N-1}$ such that~\eqref{eq:equiv_HD} holds. This implies $D_p(X) \geq |H|_p(X)$.
\par \medskip

{\sc Proof} of $\umd_p^{+,s}(X) \leq D_p(X)$:
With Lemma~\ref{lem:symmapprox} we approximate each $d_n$ in
$\calL^p(\calF_n;X)$ so that we may assume that the $d_n$ take finitely many values only.
Let
\[ \varepsilon_0 := \inf \{ \| d_n(\omega) \|_X : n=1,\ldots,N, \, \omega \in \Omega, \,
                      d_n(\omega)\not = 0 \} > 0 \]
where $\inf \emptyset := 1$. Take an $x\in X$ with $0<\|x\|_X<\varepsilon_0$
and the Rademacher sequence $(r'_n)_{n=1}^N$ given by the Setting \ref{setting:PW}.
If we define
\[  \tilde d_n (\omega,e'):= d_n(\omega) + r'_n(e')  x, 
    \sptext{1}{then}{1}
     \tilde d_n(\omega,e') \not = 0 \]
for all $(\omega,e')\in \Omega\times \D'$,
then $\tilde d_n$ is conditionally symmetric
given the $\sigma$-algebra $\calF_{n-1}\otimes \calF_{\D,n-1}'$. 
Because we may let $\|x\|\downarrow 0$
it suffices to verify the statement for $(\tilde d_n)_{n=1}^N$ or, in other words, 
we may assume without loss of generality that for all $n\in \N$ the range of 
$d_n$ is a finite set that does not contain $0$.
\smallskip

Note that by removing all (i.e., at most finitely many) atoms of measure zero in the $\sigma$-algebra $\calF_N^d$
and `updating' the definition of $(d_n)_{n=1}^{N}$ accordingly, we may assume 
that the filtration $(\calF_n^d)_{n=1}^N$ has the property that 
 $\calF_n^d$ is generated by finitely many atoms of positive measure.
\smallskip

Bearing in mind that 
for all $n\in \{1,\ldots,N\}$ the random variable $d_n$ takes only finitely many non-zero values, each with positive probability, 
one may check 
that for every atom $A\in \calF_{n-1}^d$, $n\in \{1,\ldots,N\}$,
there exist disjoint sets $A^{+}, A^{-} \in \calF_{n}^d$ such that $A = A^+ \cup A^{-}$,
$\P(A^+)=\P(A^-)$, 
and $\calL(d_{n} | A^+ ) = \calL(-d_{n} | A^{-})$.
Now we introduce a Rademacher sequence $(\rho_n)_{n=1}^N$, $\rho_n:\Omega \to \{-1,1 \}$,
defined as follows: for each atom $A$ of $\calF_{n-1}^d$ 
we set $\rho_n|_{A^+} \equiv 1$, and $\rho_n|_{A^-}\equiv -1$,
where $A^+$ and $A^{-}$ form a partition of $A$ as described above.
Moreover, we let $v_n := \rho_n d_n$ so that
$d_n = \rho_n v_n$. 
By construction, $\rho_n$ is independent from $\calF_{n-1}^d\vee \sigma(v_n)$.
It follows from the definition of $D_p(X)$ and Theorem~\ref{thm:simple_decoupling_of_vs_full_decoupling_intro} 
(see also Example~\ref{example:decoupled_sequence})
that 
\begin{align*}
&\Bigg\|
  \sum_{n=1}^{N} d_n
\Bigg\|_{\calL^p(\P;X)}
= 
\Bigg\|
  \sum_{n=1}^{N} \rho_n v_n
\Bigg\|_{\calL^p(\P;X)}
\leq 
D_p(X)
\Bigg\|
  \sum_{n=1}^{N} r'_n v_n
\Bigg\|_{\calL^p(\P\otimes\P'_{\D};X)}
\\ &
=
D_p(X)
\Bigg\|
  \sum_{n=1}^{N} r'_n r_n v_n
\Bigg\|_{{\calL}^p(\P\otimes\P'_{\D};X)}
=
D_p(X)
\Bigg\|
  \sum_{n=1}^{N} r'_n d_n
\Bigg\|_{\calL^p(\P\otimes\P'_{\D};X)}.
\end{align*} 
\end{proof}


\pagebreak
\section{Dyadic decoupling in \texorpdfstring{$c_0$ and $\ell_K^\infty$}{sequence spaces}}
\label{sec:decoupling_in_c0}

The aim of this section is to determine the asymptotics of  $D_2(\ell_\infty^K)$.
For this purpose we begin with the proof of Theorem \ref{thm:upper_decoupling_c0_intro}:

\begin{proof}[Proof of Theorem \ref{thm:upper_decoupling_c0_intro}]
First we fix $K,N\in \N$ and restrict $D_{\sqrt{\log}}$ to
$D_{\sqrt{\log}}:\ell_K^\infty \to \ell_K^\infty$. We take the Rademacher 
variables $(r_n)_{n=1}^N$ and $(r_n')_{n=1}^N$ from Setting ~\ref{setting:PW},
let $v_0\in \ell_K^\infty$, $f_n:\{-1,1\}^n\to \ell_K^\infty$
and $v_n:=f_n(r_1,\ldots,r_n)$ for $n\in \{1,\ldots,N-1\}$, and
\begin{align*}
d_n  =(d_n^{(1)},\ldots,d_n^{(K)}) & := r_n v_{n-1} : \D \to \ell_\infty^K
       \sptext{1}{for}{1}
       n\in \{1,\ldots,N\},\\
M_n^k &:=d_1^{(k)} + \cdots + d_n^{(k)}  \sptext{1.5}{for}{1}
       n\in \{1,\ldots,N\},\\
S_2(M^{(k)}) &:= \left (\sum_{n=1}^N |d_n^{(k)}|^2\right )^\frac{1}{2}. 
\end{align*}
We add $d_0=M_0=(M_0^{(1)},\ldots,M_0^{(K)}) \equiv 0 \in \ell_\infty^K$ to be in accordance with the literature.
By an extension of the Azuma-Hoeffding inequality \cite[Lemma 4.3]{Hitczenko:1990}
it is known  (see \cite[equation (4)]{Geiss:1997}) that there is a $c_1\in (0,\infty)$ with
\[ \| (M_n^{(k)})_{n=0}^N \|_{\bmo_{\psi_2}} \le c_1 \| S_2(M^{(k)}) \|_{\calL^\infty(\D_\P)} 
   \sptext{1}{for}{1}
   k\in \{1,\ldots,K\}, \]
where $\psi_2(t):=t^2$ and the $\bmo_{\psi}$-spaces are introduced in  \cite[p. 239]{Geiss:1997}.
From  \cite[Theorem 1.5]{Geiss:1997} we also know that, for $\alpha_k := (1+\log k)^{-1/2}$, 
\[     \left \| \left (\sup_{k\in \{1,\ldots,K\}} \alpha_k  |M_n^{(k)}| \right )_{n=0}^N \right \|_{\bmo_{\psi_2}} 
   \le c_2  \sup_{k\in \{1,\ldots,K\}} \left \| (M_n^{(k)})_{n=0}^N \right \|_{\bmo_{\psi_2}}
\]
for some $c_2\in (0,\infty)$, where one has to observe (in the notation of \cite{Geiss:1997}) that $\psi_2=\overline{\psi_2}$
by \cite[Example 4.3]{Geiss:1997}.
If we introduce the operators $A,B:E_N\to \calL^0(\D)$, where $E_N$ consists of all 
sequences $d=(d_n)_{n=0}^N$ as above (note that $K,N\in \N$ are fixed),
\[ A(d) := \sup_{k\in \{1,\ldots,K\}} \alpha_k \left | M_N^{(k)} \right |
   \sptext{1}{and}{1}
   B(d) := \sup_{k\in \{1,\ldots,K\}} S_2(M^{(k)}), \]
then the two previous estimates yield to 
$\left \| A(d) \right \|_{\bmo_{\psi_2}} \le c_1 c_2 \| B(d) \|_{\calL^\infty(\P_\D)}$. 
By \cite[Theorem 1.7]{Geiss:1997} we deduce
$\left \| A(d) \right \|_{\calL^2(\P_\D)} \le c  \| B(d) \|_{\calL^2(\P_\D)}$ and 
\equa
 \left  \| \sup_{k\in \{1,\ldots,K\}} \frac{\left | \sum_{n=1}^N d_n^{(k)} \right |}
                                       {\sqrt{1+\log(k)}} 
    \right \|_{\calL^2(\P_\D)} 
&\le& c \left \| \sup_{k\in \{1,\ldots,K\}} \left ( \sum_{n=1}^N |d_n^{(k)}|^2\right )^\frac{1}{2} \right \|_{\calL^2(\P_\D)} \\
& = & c \left \| \sup_{k\in \{1,\ldots,K\}} \left \| \sum_{n=1}^N r'_nd_n^{(k)} \right \|_{\calL^2(\P_\D')} \right \|_{\calL^2(\P_\D))} \\
&\le& c \left \| \sup_{k\in \{1,\ldots,K\}} \left | \sum_{n=1}^N r'_nd_n^{(k)} \right | \right \|_{\calL^2(\P_\D\otimes\P_\D')}\\
& = & c \left \| \sum_{n=1}^N r'_nd_n \right \|_{\calL^2(\P_\D\otimes\P_\D';\ell^\infty_K)}.
\tion

The statement follows with $K\to \infty$ by monotone convergence
and by observing that $\| (\xi_k)_{k\in \N}\|_{c_0} = \sup_{K\in \N} \| (\xi_k)_{k=1}^K\|_{\ell_\infty^K}$
and that the constants $c_1,c_2,c$ do not depend on $K$ and $N$.
\end{proof}

Corollary \ref{cor:decouple_ell_infty^K} below
complements \cite[Proposition 8.6.8, Example 8.6.13]{Pietsch:Wenzel:1998} where it is shown that
for every Banach space $X$, 
$N\in \{2,3,\ldots\}$, $v_0\in X$, and $v_n:=f_n(r_1,\ldots,r_n)$ with 
$f_n\colon \{-1,1\}^{n} \rightarrow X$, $n\in \{1,\ldots,N-1\},$ it holds that
\[ \left \| \sum_{n=1}^N r_n v_{n-1} \right \|_{\calL^2(\P_{\D};X)}
   \le \sqrt{N} \left \| \sum_{n=1}^N r'_n v_{n-1} \right \|_{\calL^2(\P_{\D}\otimes\P_{\D}'; X)}. \]
Here the decoupling constant $\sqrt{N}$ {\em depends on the length of the dyadic martingale} which we are
going to  decouple. In Corollary \ref{cor:decouple_ell_infty^K} we strengthen this result 
for $X=\ell_{2^N}^\infty$ by obtaining the same asymptotic upper bound, but now independent from the length 
of the martingale.
\smallskip

\begin{corollary}
\label{cor:decouple_ell_infty^K}
There is a constant $c\in [1,\infty)$ such that for all $K\in \N$ one has
\[ \frac{1}{c} \sqrt{1+\log(K)} \le D_2(\ell^\infty_K) \le c  \sqrt{1+\log(K)}. \]
\end{corollary}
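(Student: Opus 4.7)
The proof splits into the two bounds; both use ingredients that are essentially already in place.

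\textbf{Upper bound.} The idea is to reduce the $\ell_K^\infty$-decoupling problem to the $c_0$-decoupling for the operator $D_{\sqrt{\log}}$ from Theorem~\ref{thm:upper_decoupling_c0_intro}. For any $w\in \ell_K^\infty$, viewed as an element of $c_0$ supported on the first $K$ coordinates, one has the trivial two-sided estimate
\[
  \frac{1}{\sqrt{1+\log K}}\,\|w\|_{\ell_K^\infty}
  \le \|D_{\sqrt{\log}} w\|_{c_0}
  \le \|w\|_{\ell_K^\infty}.
\]
Applied coordinate-wise to a dyadic martingale $(v_n)_{n=0}^{N-1}\subseteq \ell_K^\infty \hookrightarrow c_0$, together with $D_2(D_{\sqrt{\log}})<\infty$ from Theorem~\ref{thm:upper_decoupling_c0_intro}, this gives
\[
  \frac{1}{\sqrt{1+\log K}}\left\| \sum_{n=1}^N r_n v_{n-1} \right\|_{\calL^2(\P_\D;\ell_K^\infty)}
  \le D_2(D_{\sqrt{\log}}) \left\| \sum_{n=1}^N r'_n v_{n-1} \right\|_{\calL^2(\P_\D\otimes \P'_\D;\ell_K^\infty)},
\]
where on the right-hand side we have used that $\|v_{n-1}\|_{c_0}=\|v_{n-1}\|_{\ell_K^\infty}$. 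This yields $D_2(\ell_K^\infty)\le c\sqrt{1+\log K}$.

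\textbf{Lower bound.} The plan is to apply Lemma~\ref{lemma:upper_decoupling_finite_cotype} with $q=2$ and with $\varphi_n=\varphi'_n=r_n$ a Rademacher sequence; then $c=\eta=1$ and the resulting $\ell_{2^N}^\infty$-valued sequence $(v_n)_{n=0}^{N-1}$ is adapted to the dyadic filtration of $r$, i.e.\ it is a valid test sequence in the definition of $D_2(\ell_{2^N}^\infty)$. Part (ii) of the lemma gives, for $N\ge 2$,
\[
  \left\| \sum_{n=1}^N r_n v_{n-1} \right\|_{\calL^2(\P_\D;\ell_{2^N}^\infty)}^2 \ge \tfrac{1}{2}\left(\tfrac{N}{2}\right)^2,
\]
while part (i) gives $\|\sum_{n=1}^N r'_n v_{n-1}\|_{\calL^2(\P_\D\otimes\P'_\D;\ell_{2^N}^\infty)}\le c_2\sqrt{N}$. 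Taking the ratio produces a lower bound $D_2(\ell_{2^N}^\infty)\ge c'\sqrt{N}$. For general $K\ge 2$ pick the largest $N\in\N$ with $2^N\le K$, so that $N\ge \log_2 K -1$; since $\ell_{2^N}^\infty$ sits isometrically (as a $1$-complemented subspace) in $\ell_K^\infty$, the decoupling constant is monotone, $D_2(\ell_{2^N}^\infty)\le D_2(\ell_K^\infty)$, and we obtain $D_2(\ell_K^\infty)\ge c'\sqrt{\log_2 K -1}\gtrsim \sqrt{1+\log K}$. The case $K=1$ is handled by the trivial bound $D_2(\ell_1^\infty)=D_2(\R)\ge 1$.

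The only mildly delicate point is verifying that the construction in Lemma~\ref{lemma:upper_decoupling_finite_cotype}, when specialized to $\varphi=\varphi'=r$, produces a sequence $(v_n)_{n=0}^{N-1}$ that is $(\calF_n^r)$-adapted (which it is by inspection of the definition of $v_{n-1}$ there) and hence an admissible dyadic martingale in the sense of the definition of $D_2(\ell_{2^N}^\infty)$; no other obstacle arises.
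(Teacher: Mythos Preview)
Your proof is correct and follows the same approach as the paper's. The upper bound argument is identical to the paper's (embedding $\ell_K^\infty$ into $c_0$ and invoking Theorem~\ref{thm:upper_decoupling_c0_intro}), and your lower bound simply spells out in detail what the paper records as ``known'' with a pointer to Lemma~\ref{lemma:upper_decoupling_finite_cotype}. One small notational slip: writing ``$\varphi_n=\varphi'_n=r_n$'' reads as if the two sequences coincide, whereas Lemma~\ref{lemma:upper_decoupling_finite_cotype} requires $\varphi'$ independent of $\varphi$; you clearly intend both to be Rademacher but independent, which is indeed what is needed to match the definition of $D_2$.
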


\begin{proof}
The lower bound is known (see e.g. \cite{Garling:1990}, \cite{Veraar:2007}, or Lemma \ref{lemma:upper_decoupling_finite_cotype}).
The upper bound follows directly from Theorem~\ref{thm:upper_decoupling_c0_intro}, because (with the notation 
of the definition of $D_2(X)$ and with $a_1,\ldots,a_K$ being the unit vectors in $\ell_K^1$)
\begin{align*}
       \frac{\left  \| \sum_{n=1}^N r_nv_{n-1}  \right \|_{\calL^2(\P_\D;\ell^\infty_K)}}{\sqrt{1+\log(K)}} 
&\le \left  \| \sup_{k\in \{1,\ldots,K\}} \frac{\left | \sum_{n=1}^N \langle r_nv_{n-1},a_k \rangle \right |}
        {\sqrt{1+\log(k)}} \right \|_{\calL^2(\P_\D)} \\
&\le D_2 \left (D_{\sqrt{\log}} \right ) \left \| \sum_{n=1}^N r'_n v_{n-1} \right \|_{\calL^2(\P_\D\otimes \P_\D';\ell^\infty_K)}.
\qedhere
\end{align*}
\end{proof}

Given $\delta \in (0,1]$, we say that a sequence $(a_k)_{k\in I}\subset X'$, where $X'$ is the norm-dual of $X$
and where $I=\{1,\ldots,K\}$ or $I=\N$, is $\delta$-norming, provided that $\|a_k\|_{X'}=1$ and 
$\sup_{k\in I} |\langle x,a_k \rangle| \ge \delta \|x\|_X$ for all $x\in X$.
\medskip

\begin{corollary}
Let $X$ be a separable Banach space, let $(a_\ell)_{\ell\in I}\subset X'$ be $\delta$-norming
for some $\delta \in (0,1]$, and let 
$\calP=\calP^{\textnormal{Rad}}(X) := \{ \frac{1}{2} \delta_x + \frac{1}{2} \delta_{-x}: x\in X \}$.
For every stochastic basis $(\Omega,\calF,\P,\F)$ with $\F=(\calF_n)_{n\in \N_0}$ and        
every finitely supported $(d_n)_{n\in \N} \in \adapt_2(\Omega,\F;X,\calP_{2\textnormal{-ext}})$
it holds that
\[   
   \left \| \sum_{n=1}^{\infty} d_n \right \|_{\calL^2(\P;X)}
   \le \frac{D_2(D_{\sqrt{\log}})}{\delta} \left \| \sup_{k\in I} \left [ \sqrt{1+\log(k)} 
       \left | \left \langle \sum_{n=1}^{\infty}  e_n , a_k \right \rangle  \right | 
                              \right ] \right \|_{\calL^2(\P;X)}, 
\]
provided that $(e_n)_{n\in \N}$ is an $\F$-decoupled tangent sequence of $(d_n)_{n\in \N}$.
\end{corollary}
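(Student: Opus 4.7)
The plan is to invoke Theorem~\ref{thm:simple_decoupling_of_vs_full_decoupling_intro} with a pair of bounded linear operators $X\to c_0$ manufactured out of the norming sequence $(a_k)$, so that the simple decoupling condition~(ii) reduces exactly to the defining inequality of the finite constant $D_2(D_{\sqrt{\log}})$ supplied by Theorem~\ref{thm:upper_decoupling_c0_intro}; the $\delta$-norming hypothesis will then recover the $X$-norm on the left-hand side after a monotone limit in a truncation parameter.

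For each $M\in\N$ I would introduce $\iota_M\colon X\to c_0$ defined by
\[
  \iota_M(x)_k := \sqrt{1+\log k}\,\langle x,a_k\rangle\,\mathbf{1}_{\{k\in I,\,k\le M\}},\qquad k\in\N,
\]
which is bounded linear (of norm $\le \sqrt{1+\log M}$) and satisfies $D_{\sqrt{\log}}\iota_M(x)_k = \langle x,a_k\rangle\,\mathbf{1}_{\{k\in I,\,k\le M\}}$. Applying Theorem~\ref{thm:simple_decoupling_of_vs_full_decoupling_intro} with $Y=Z=c_0$, $S:=D_{\sqrt{\log}}\iota_M$, $T:=\iota_M$, $p=2$, $\underline{\Psi}(\xi)=\xi^2$, $\overline{\Psi}(\xi)=D_2(D_{\sqrt{\log}})^2\xi^2$, and $\calP:=\calP^{\textnormal{Rad}}(X)$, the simple condition~(ii) reduces, for $\varphi_n = r_n x_n$ and $v_{n-1}:=\iota_M(\mathbf{1}_{A_{n-1}}x_n)$ (a $c_0$-valued Paley--Walsh martingale because $A_{n-1}\in\sigma(\varphi_1,\dots,\varphi_{n-1})\subseteq\sigma(r_1,\dots,r_{n-1})$), exactly to
\[
  \Bigl\|\sum_n r_n\, D_{\sqrt{\log}}v_{n-1}\Bigr\|_{\calL^2(\P;c_0)}
  \le D_2(D_{\sqrt{\log}})\,\Bigl\|\sum_n r'_n v_{n-1}\Bigr\|_{\calL^2(\P;c_0)},
\]
which is the defining inequality of the (finite) constant $D_2(D_{\sqrt{\log}})$.

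The equivalent condition~(i) of the theorem would then produce, for every finitely supported $(d_n)\in\adapt_2(\Omega,\F;X,\calP_{2\textnormal{-ext}})$ with $\F$-decoupled tangent $(e_n)$,
\[
  \Bigl\|\sup_{\substack{k\in I\\k\le M}}\bigl|\bigl\langle{\textstyle\sum_n d_n},a_k\bigr\rangle\bigr|\Bigr\|_{\calL^2(\P)}
  \le D_2(D_{\sqrt{\log}})\,\Bigl\|\sup_{\substack{k\in I\\k\le M}}\sqrt{1+\log k}\,\bigl|\bigl\langle{\textstyle\sum_n e_n},a_k\bigr\rangle\bigr|\Bigr\|_{\calL^2(\P)}.
\]
I would then let $M\uparrow\infty$ (trivially $M=K$ if $I$ is finite) and use monotone convergence to promote both truncated suprema to suprema over all of $I$, after which the pointwise $\delta$-norming bound $\delta\|\sum_n d_n\|_X\le \sup_{k\in I}|\langle\sum_n d_n,a_k\rangle|$ on the left and division by $\delta$ give the claimed inequality.

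The hard part will be the unboundedness of the weights $\sqrt{1+\log k}$ when $I=\N$: this is precisely what forces the $M$-truncation in $\iota_M$ so that $S$ and $T$ genuinely belong to $L(X,c_0)$, and hence forces the final monotone limit. Fortunately both suprema appearing in the truncated inequality are non-negative and monotonically increasing in $M$, so the monotone convergence step is immediate and no estimates uniform in $M$ are required.
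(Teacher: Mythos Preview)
Your proposal is correct and follows essentially the same route as the paper's proof: both truncate to finitely many functionals so that the weighted embedding $x\mapsto(\sqrt{1+\log k}\,\langle x,a_k\rangle)_{k\le M}$ lands boundedly in $c_0$, invoke Theorem~\ref{thm:simple_decoupling_of_vs_full_decoupling_intro} with $S=D_{\sqrt{\log}}\iota_M$ and $T=\iota_M$ so that condition~(ii) reduces to the defining inequality of $D_2(D_{\sqrt{\log}})$ (finite by Theorem~\ref{thm:upper_decoupling_c0_intro}), and then pass to $M\to\infty$ by monotone convergence before (or after) invoking the $\delta$-norming bound. The only cosmetic difference is that the paper applies the $\delta$-norming estimate at the outset and then the decoupling for each fixed $K$, whereas you decouple first and norm at the end; the order is immaterial.
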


\begin{proof}
Theorems~\ref{thm:simple_decoupling_of_vs_full_decoupling_intro} and~\ref{thm:upper_decoupling_c0_intro}
imply that for all $K\in \N$ and $I_K :=  I \cap \{1,\ldots,K\}$ we have that
\equa
      \left \| \sum_{n=1}^{\infty} d_n \right \|_{\calL^2(\P;X)} \!\!\!
&\le& \frac{1}{\delta} 
      \sup_{K\in \N} \left \| \sup_{k \in I_K} \frac{1}{\sqrt{1+\log(k)}}
      \left | \left \langle \sum_{n=1}^{\infty} d_n , \sqrt{1+\log(k)} a_k \right \rangle \right |  \right \|_{\calL^2(\P;X)} \\
&\le& \frac{D_2(D_{\sqrt{\log}})}{\delta} 
      \sup_{K\in \N} \left \| \sup_{k \in I_K} 
      \left | \left \langle \sum_{n=1}^{\infty} e_n , \sqrt{1+\log(k)} a_k \right \rangle \right |  \right \|_{\calL^2(\P;X)} \\
& = & \frac{D_2(D_{\sqrt{\log}})}{\delta} 
      \left \| \sup_{k \in I} \left [ \sqrt{1+\log(k)}
      \left | \left \langle \sum_{n=1}^{\infty} e_n ,  a_k \right \rangle \right | \right ] \right \|_{\calL^2(\P;X)}.
\tion
\end{proof}


\section{Dyadic decoupling and chaos}
\label{sec:chaos}

For the dyadic decoupling considered in Sections \ref{sec:stochInt} 
and \ref{sec:decoupling_in_c0} we show in 
Theorem \ref{theorem:equivalence_chaos} below
that it is also natural to allow decoupling with 
respect to a different distribution (cf.~Definition~\ref{def:var_decoup}), i.e.\ one can vary the distribution 
used for the upper decoupling on the right side without changing any requirements 
on the underlying Banach space $X$. Moreover, Theorem \ref{theorem:equivalence_chaos} shows that a Banach space $X$ allows 
for decoupling with respect to a distribution in some fixed chaos obtained from  $\calP_{2-{\rm ext}}^{\rm Rad}$ 
if and only if it allows for decoupling with respect the first chaos, i.e., if and only if $D_2(X)<\infty$.

\begin{definition}\label{def:var_decoup}
Let $p\in (0,\infty)$, $\mu,\nu \in \calP_p(\R)$, and $X$ be a separable Banach space. Then
$D_p(X;\mu,\nu)$ is the infimum over all $c\in [0,\infty]$ such that 
\[ \left \| \sum_{n=1}^N \varphi_n v_{n-1} \right \|_{\calL^p(\P;X)}
   \le c \left \| \sum_{n=1}^N \psi_n' v_{n-1} \right \|_{\calL^p(\P\otimes \P';X)} \]
for all $N\ge 2$, where 
$\varphi_1,\ldots,\varphi_N$ are independent random variables on $(\Omega,\calF,\P)$ with law $\mu$,
$\psi_1',\ldots,\psi_N':\Omega'\to \R$ are independent random variables on $(\Omega',\calF',\P')$ with law $\nu$,
$v_0\in X$, and $v_n=f_n(\varphi_1,\ldots,\varphi_n)\in \calL^p(\P;X)$ for $n\in \{1,\ldots,N-1\}$
and Borel functions $f_n:\R^n\to X$.
\end{definition}

\begin{definition}
For $\alpha\in \calP(\R)$ and $L\in \N$ a measure $\mu\in \calP(\R)$ belongs to the
$L$-th chaos ${\calC}_L(\alpha)$ if there are an integer $K\ge L$ and 
$(a_{\ell_1,\ldots,\ell_L})_{1\le \ell_1 < \cdots < \ell_L\le K} \subseteq \R$
with
\begin{equation}\label{eq:mu_chaos}
 \mu = \calL\left ( \sum_{1\le \ell_1 < \cdots < \ell_L\le K}
                      a_{\ell_1,\ldots,\ell_L} \varphi_{\ell_1} \cdots  \varphi_{\ell_L}  \right ), 
\end{equation}
where $\varphi_1,\ldots,\varphi_K:\Omega\to \R$ are independent with law $\alpha$.
\end{definition}

\begin{theorem}
\label{theorem:equivalence_chaos}
For a separable Banach space $X$, $p\in [1,\infty)$, $L\in \N$, $\alpha \in \calP_{p-{\rm ext}}^{\rm Rad}$, 
and $\delta_0 \not = \mu\in \calC_L(\alpha)$ the following assertions are equivalent:
\begin{enumerate}
\item \label{item:1:theorem:equivalence_chaos}
      $D_{p}(X)<\infty$.
\item \label{item:2:theorem:equivalence_chaos}
      There exists a symmetric $\nu\in \bigcap_{r\in (0,\infty)}\calP_r(\R)$ such that $D_{p}(X;\mu,\nu)<\infty$.
\item \label{item:3:theorem:equivalence_chaos}
      $D_{p}(X;\mu, \frac{1}{2}(\delta_{-1}+\delta_1))<\infty$.
\item \label{item:4:theorem:equivalence_chaos}
      $D_{p}(X;\mu,\mu)<\infty$.
\end{enumerate}
\end{theorem}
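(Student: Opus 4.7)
The plan is to close the loop by proving (iii) $\Rightarrow$ (ii), (iii) $\Rightarrow$ (iv) $\Rightarrow$ (i) $\Rightarrow$ (iii), and separately (ii) $\Rightarrow$ (i). Throughout I will reduce to $p=2$ whenever helpful using the extrapolation result in Theorem~\ref{thm:stochastic_integration}(i). The implication (iii) $\Rightarrow$ (ii) is immediate by taking $\nu=\tfrac12(\delta_{-1}+\delta_1)$, which is symmetric and lies in $\bigcap_{r>0}\calP_r(\R)$. For (iv) $\Rightarrow$ (i), note that since $\alpha\in\calP^{\rm Rad}_{p{\rm -ext}}$ is a symmetric mean-zero law with sub-Gaussian tails (by Theorem~\ref{thm:char_p-ext_Rademacher}), the chaos-$L$ law $\mu$ is centered (as $\E\!\prod_{i\in S}\alpha_i=0$ for $|S|\geq 1$), belongs to $\calP_p(\R)$, and satisfies $\sigma^2=\int |r|^2\,d\mu>0$ because $\mu\neq\delta_0$ forces at least one coefficient $a_S\neq 0$ and then $\sigma^2\geq a_S^2 (\E\alpha^2)^L>0$. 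Then (iv) is exactly the hypothesis of Theorem~\ref{thm:D_2(X)_minimal}(ii) applied to $\mu$, yielding $D_p(X)<\infty$.

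For (iii) $\Rightarrow$ (iv), I would use that $\mu$ is centered but not necessarily symmetric, so symmetrize: with $\tilde{\varphi}'$ an independent copy of $\varphi'$, the variable $\varphi_n'-\tilde{\varphi}_n'$ is symmetric with $\E|\varphi'-\tilde{\varphi}'|>0$ (since $\mu\neq\delta_0$). Writing $\varphi'-\tilde{\varphi}'=\varepsilon\cdot|\varphi'-\tilde{\varphi}'|$ with a Rademacher sign $\varepsilon$ independent of the absolute value, Jensen's inequality conditionally on $|\varphi'-\tilde{\varphi}'|$ yields
\[
\Bigl\|\textstyle\sum_{n}(\varphi_n'-\tilde{\varphi}_n')v_{n-1}\Bigr\|_{\calL^q(X)}
\geq \E|\varphi'-\tilde{\varphi}'|\,\Bigl\|\textstyle\sum_{n} r_n' v_{n-1}\Bigr\|_{\calL^q(X)},
\]
and the triangle inequality gives $\|\sum \varphi_n' v_{n-1}\|_q\geq\tfrac12\E|\varphi'-\tilde{\varphi}'|\,\|\sum r_n' v_{n-1}\|_q$. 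Combined with (iii) this yields (iv).

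For (i) $\Rightarrow$ (iii), I would use the chaos representation of $\mu$ and iteratively apply Rademacher decoupling via Theorem~\ref{thm:simple_decoupling_of_vs_full_decoupling_intro}. Writing $\varphi_n=\sum_S a_S\prod_{i\in S}\alpha_{n,i}$ with $\alpha_{n,i}$ iid copies of $\alpha\in\calP^{\rm Rad}_{p{\rm -ext}}$, and using Theorem~\ref{thm:char_p-ext_Rademacher} to expand each $\alpha_{n,i}=\sigma\gamma_{n,i}+\sum_m b_m r_{n,i,m}$, we can reorganize $\sum_n\varphi_n v_{n-1}$ as a finite sum (indexed over $S$ and the Gaussian/Rademacher expansions) of iid Rademacher-and-Gaussian sums $\sum_n \xi_n v_{n-1}$ where each $\xi_n$ is either $\pm 1$ or a standard Gaussian. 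Applying $D_p(X)<\infty$ (together with Remark~\ref{remark:Dp(X)} for the Gaussian parts) to each such sum and combining via the triangle inequality yields $\|\sum\varphi_n v_{n-1}\|_p\leq C(L,\alpha,\mu)\,D_p(X)\|\sum r_n' v_{n-1}\|_p$. The key observation making this work is that for $\alpha$ Rademacher the products $\prod_{i\in S}\alpha_i$ are again Rademacher, so the chaos layers collapse; for general $\alpha$ one uses that Gaussian decoupling (for tangent Gaussian sequences) also follows from $D_p(X)<\infty$.

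The main obstacle is (ii) $\Rightarrow$ (i), where $\nu\neq\mu$ prevents a direct application of the previous arguments. My plan is: from Theorem~\ref{thm:D_2(X)_minimal}(i) we have $\calP(\gamma,\R)\subseteq\calP(\mu,\R)_{p\text{-ext}}$, and since $\nu$ is symmetric with finite variance, the classical CLT also gives $\calP(\gamma,\R)\subseteq\calP(\nu,\R)_{p\text{-ext}}$. A two-sided variant of Theorem~\ref{thm:simple_decoupling_of_vs_full_decoupling_intro} (with extensions on both sides of the inequality, whose proof parallels the one given in Appendix~\ref{sec:main_proof} using uniform $\calL^p$-integrability on both sequences simultaneously) then upgrades $D_p(X;\mu,\nu)<\infty$ to $D_p(X;\gamma,\gamma)<\infty$. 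Finally, another application of Theorem~\ref{thm:D_2(X)_minimal}(ii), this time with $\mu$ replaced by the Gaussian law $\gamma$, delivers $D_p(X)<\infty$. This chain $\text{(ii)}\Rightarrow D_p(X;\gamma,\gamma)<\infty\Rightarrow\text{(i)}$ is the step that I expect to require the most technical care, since extending the equivalence in Theorem~\ref{thm:simple_decoupling_of_vs_full_decoupling_intro} to the setting of different distributions on either side is not immediate from the stated version.
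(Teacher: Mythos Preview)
Your implications $(iii)\Rightarrow(ii)$, $(iii)\Rightarrow(iv)$, and $(iv)\Rightarrow(i)$ are correct and essentially coincide with the paper's arguments (the latter two are exactly the paper's $(iii)\Rightarrow(iv)$ via \cite[Lemma~4.5]{Ledoux:Talagrand:1991} and $(iv)\Rightarrow(i)$ via Theorem~\ref{thm:D_2(X)_minimal}).

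There is, however, a genuine gap in your $(i)\Rightarrow(iii)$. After expanding $\alpha_{n,i}=\sigma\gamma_{n,i}+\sum_m b_m r_{n,i,m}$ and multiplying out $\prod_{i\in S}\alpha_{n,i}$, the resulting terms are products of $L$ independent factors; when two or more of these factors are Gaussian the product is neither a Rademacher nor a Gaussian, and its law is generally \emph{not} in $\calP^{\rm Rad}_{p\text{-ext}}$ (for instance the characteristic function of $\gamma_1\gamma_2$ is $(1+t^2)^{-1/2}$, which is not of the form $e^{-\sigma^2 t^2/2}\prod_n\cos(a_n t)$ characterizing $\calP^{\rm Rad}_{p\text{-ext}}$ via Theorem~\ref{thm:char_p-ext_Rademacher}). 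Hence you cannot invoke dyadic or Gaussian decoupling term by term, and Remark~\ref{remark:Dp(X)} does not help. Your observation that ``the chaos layers collapse'' is valid only when $\alpha$ is a single Rademacher; for general $\alpha$ the expansion is also not finite. What iterating Theorem~\ref{thm:simple_decoupling_of_vs_full_decoupling_intro} $L$ times \emph{does} give (peeling off one factor at a time with a lexicographically refined filtration over the pairs $(n,\ell)$) is only $D_p(X;\mu,\nu)\le D_p(X)^L$ for the specific \emph{decoupled chaos} law
\[
  \nu=\calL\Big(\sum_{\ell_1<\cdots<\ell_L} a_{\ell_1,\ldots,\ell_L}\,\varphi^{1}_{\ell_1}\cdots\varphi^{L}_{\ell_L}\Big)
\]
built from $L$ independent copies of $(\varphi_\ell)$. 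This is precisely the paper's $(i)\Rightarrow(ii)$. Passing from this $\nu$ to a Rademacher on the right-hand side still requires comparing $\|\sum_n\psi'_n v_{n-1}\|_p$ (with $\psi'_n\sim\nu$ symmetric and independent of the $v$'s) to $\|\sum_n r'_n v_{n-1}\|_p$, and that comparison is a finite-cotype statement.

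This finite-cotype observation is also what makes your $(ii)\Rightarrow(i)$ far easier than the speculative two-sided extension of Theorem~\ref{thm:simple_decoupling_of_vs_full_decoupling_intro} you propose. Lemma~\ref{lemma:upper_decoupling_finite_cotype} shows that $D_p(X;\mu,\nu)<\infty$ already forces $X$ to have finite cotype (here $\mu\ne\delta_0$ so $\P(|\varphi_1|\ge c)>0$ for some $c$, and $\nu$ is mean-zero with all moments). Once finite cotype is available, \cite[Proposition~9.14]{Ledoux:Talagrand:1991} gives $\|\sum_n\psi'_n v_{n-1}\|_p\le c\,\|\sum_n r'_n v_{n-1}\|_p$ directly, i.e.\ $(ii)\Rightarrow(iii)$. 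The paper therefore closes the cycle as $(i)\Rightarrow(ii)\Rightarrow(iii)\Rightarrow(iv)\Rightarrow(i)$, never needing a version of the main reduction theorem with different distributions on the two sides.
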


\begin{proof}
$\eqref{item:1:theorem:equivalence_chaos} \Rightarrow \eqref{item:2:theorem:equivalence_chaos}$
Let $\mu$ be represented by~\eqref{eq:mu_chaos} (i.e., we are given $K,L\in \N$ satisfying 
$K>L$ and $(a_{\ell_1,\ldots,\ell_L})_{1\leq \ell_1<\ell_2<\ldots<\ell_L\leq K} \in \R$). 
If we define $\nu$ by setting
\[ \nu := \calL\left ( \sum_{1\le \ell_1 < \cdots < \ell_L\le K}
                      a_{\ell_1,\ldots,\ell_L} \varphi_{\ell_1}^1 \cdots  \varphi_{\ell_L}^L \right ), \]
where $\varphi_k^\ell:\Omega\to \R$, $(\ell,k)\in \{1,\ldots,L\}\times \{1,\ldots,K\}$ 
are independent and symmetric random variables with law $\alpha$, then it follows immediately by iteratively 
applying Theorem~\ref{thm:simple_decoupling_of_vs_full_decoupling} 
(or Theorem~\ref{thm:simple_decoupling_of_vs_full_decoupling_intro})
that $D_{p}(X;\mu,\nu) \le (D_{p}(X))^L <\infty$. Note that
that $\nu$ is symmetric and that, in general, $\mu\neq \nu$.
\smallskip

$\eqref{item:2:theorem:equivalence_chaos} \Rightarrow \eqref{item:3:theorem:equivalence_chaos}$
By Lemma \ref{lemma:upper_decoupling_finite_cotype} the assumption 
$D_{p}(X;\mu,\nu)<\infty$ implies that $X$ has finite cotype. This enables us to apply 
\cite[Proposition 9.14]{Ledoux:Talagrand:1991} to deduce that
\[ \left \| \sum_{n=1}^N \psi_n' v_{n-1} \right \|_{\calL^{p}(\P\otimes \P';X)}
   \le c  \left \| \sum_{n=1}^N r_n' v_{n-1} \right \|_{\calL^{p}(\P\otimes \P_\D';X)}, \]
where the constant $c>0$ depends only on $p$, the cotype of $X$, and $\nu$.
\smallskip

$\eqref{item:3:theorem:equivalence_chaos} \Rightarrow \eqref{item:4:theorem:equivalence_chaos}$
Let $\varphi_1,\ldots,\varphi_N$ be independent random variables on $(\Omega,\calF,\P)$ with law $\mu$,
$v_0\in X$, and $v_n=f_n(\varphi_1,\ldots,\varphi_n)\in \calL^p(\P;X)$ for $n\in \{1,\ldots,N-1\}$
and Borel functions $f_n:\R^n\to X$. We use \cite[Lemma 4.5]{Ledoux:Talagrand:1991} to conclude
\begin{align*}
&\left \| \sum_{n=1}^N \varphi_n v_{n-1} \right \|_{\calL^{p}(\P;X)} 
\le D_{p}
  \left (X;\mu, \tfrac{1}{2}(\delta_{-1}+\delta_1)\right ) 
      \left \| \sum_{n=1}^N r_n' v_{n-1} \right \|_{\calL^{p}(\P\otimes \P_\D';X)} \\
&\le \big|\E|\varphi_1'-\varphi_1''|\big|^{-1} D_{p}\left (X;\mu, \tfrac{1}{2}(\delta_{-1}+\delta_1)\right )
      \left \| \sum_{n=1}^N (\varphi_n'-\varphi_n'') v_{n-1} \right \|_{\calL^{p}(\P\otimes \P'\otimes \P'';X)}  \\
&\le 2\big|\E|\varphi_1'-\varphi_1''|\big|^{-1} D_{p}\left (X;\mu, \tfrac{1}{2}(\delta_{-1}+\delta_1)\right ) 
      \left \| \sum_{n=1}^N \varphi_n' v_{n-1} \right \|_{\calL^{p}(\P\otimes \P';X)} \\
\end{align*}
where $\varphi_1',\ldots,\varphi_N':\Omega'\to X$ and  $\varphi_1'',\ldots,\varphi_N'':\Omega''\to X$ are independent copies
of $\varphi_1,\ldots,\varphi_N$. The implication
$\eqref{item:4:theorem:equivalence_chaos} \Rightarrow \eqref{item:1:theorem:equivalence_chaos}$
follows from Theorem \ref{thm:D_2(X)_minimal}.
\end{proof}


\section{Relations to other constants and open problems}
\label{sec:relations_problems}

\subsection{Randomized UMD constants}
\label{subsec:random_umd}
Garling~\cite{Garling:1990} introduced upper- and lower randomized UMD inequalities. In our notation 
the corresponding constants would be $\umd_p^+(X)$ and $\umd_p^-(X)$, where $\umd_p^+(X)$ 
is defined as $\umd_p^{+,s}(X)$ but {\em without} the condition {\em conditionally symmetric},
and $\umd_p^-(X)$ is the constant for the reverse inequality. In 
general, this leads to a different behavior of the optimal constants. For example,  
it follows from  Hitczenko ~\cite[Theorem 1.1]{Hitczenko:1994} that $\sup_{p\in [2,\infty)} D_p(\R) < \infty$.
But, as outlined in \cite[p. 348]{CoxVeraar:2011}, one has 
$\umd_p^+(\R) \succeq \sqrt{p}$ as $p\to \infty$ by combining the result of Burkholder ~\cite[Theorem 3.1]{Burkholder:1988}  about the optimal behavior 
of the constant in the square function inequality and the behavior of the constant in the Khintchine inequality for Rademacher variables.

\subsection{General decoupling constants}
\begin{enumerate}[(a)]
\item McConnell proved in \cite[Theorem 2.2]{McConnell:1989} (see also \cite{Hitczenko:notes}) that a Banach space $X$ is a UMD space 
      if and only if tangent sequences have equivalent moments.
\item Cox-Veraar~\cite[Example 4.7]{CoxVeraar:2011} proved that the upper decoupling of martingales is valid in $L_1$, note that  $L_1$ is a pro-type
      of a non-UMD space.
\end{enumerate}

\subsection{Stochastic integration}
\label{subsec:Stochastic_integration}
In the development of stochastic integration theory in Banach spaces (as presented 
in e.g.~\cite{vanNeervenVeraarWeis:2007} and~\cite{CoxVeraar:2011}) 
the issue regarding the undesirable assumption on the 
filtration in the work by Garling~\cite{Garling:1986} was known to 
the authors. In those articles the problem was circumvented 
in two ways: 
\medskip

\begin{enumerate}[(a)]
\item In~\cite[Lemma 3.4]{vanNeervenVeraarWeis:2007}, a decoupling argument due to 
      Montgomery-Smith~\cite{MontgomerySmith:1998} is used
      to prove that for $p\in (1,\infty)$ it holds that
      $W_p(X) \leq \beta_p(X)$, where $\beta_p(X)$ is the $L^p$-UMD constant of $X$. 
      This approach does not cover $p\in (0,1]$, and moreover the UMD property does not seem to be 
      natural in this setting as (for example) it excludes $L_1$, but $W_p(L_1)<\infty$ 
      for all $p\in (0,\infty)$ (see also~\cite{CoxVeraar:2011}).
\item \label{subsec:Stochastic_integration:b}
      In ~\cite[Theorem 5.4]{CoxVeraar:2011} it is observed that $W_p(X)<\infty$ if $D_p^{\rm gen}(X)<\infty$, where 
      $D_p^{\rm gen}(X)$ is the infimum over all $c\in [0,\infty]$ such that 
      \[  \left\| \sum_{n=1}^{\infty} d_n \right\|_{\calL^p(\P;X)} 
          \leq  c \left\| \sum_{n=1}^{\infty} e_n \right\|_{\calL^p(\P;X)} \]
      whenever $(e_n)_{n\in\N}$ is an $\F$-decoupled tangent sequence of a
      finitely supported $X$-valued $\F$-adap\-ted sequence of random variables $(d_n)_{n\in \N}$. 
\end{enumerate}

\subsection{Open problems}
The approach in Section \ref{subsec:Stochastic_integration}\eqref{subsec:Stochastic_integration:b}
is unsatisfactory in the following respect: The Brownian motion is naturally connected to Rademacher sequences,
but  the definition of $D_p^{\rm gen}(X)$ had to use more general distributions because of the progressive enlargement of the
filtration, which was explicitly allowed in \cite{CoxVeraar:2011}. The handling of $D_p^{\rm gen}(X)$ is 
more involved than the handling of $D_p(X)$ in certain situations. One example concerns extrapolation results 
in the sense of Proposition~\ref{prop:extrapolation_decoupling_dyadic_martingales}. Here 
it has been  one of the contributions of~\cite{CoxVeraar:2011} to prove extrapolation properties 
for the general decoupling, which is more involved than 
our extrapolation in  Proposition~\ref{prop:extrapolation_decoupling_dyadic_martingales}.
So, the systematic approach of this article revealed the following open questions:

\begin{enumerate}[(a)]
\item Is there a constant $c_p>0$, depending at most on $p$, such that,
      for all Banach spaces $X$,
      \[ D^{\rm gen}_p(X) \le c_p D_p(X)? \]
\item Or one might ask without requiring a uniform estimate: Is that true 
      that $D_p(X)<\infty$ implies $D^{\rm gen}_p(X)<\infty$?

\item Another problem (which was not in the focus of this article) would be: 
      What is the relation between  $D_p(X)$ and $\umd_p^+(X)$ from 
      Subsection \ref{subsec:random_umd}?
\end{enumerate}


\appendix

\section{Proof of Theorem \ref{thm:simple_decoupling_of_vs_full_decoupling}}

\label{sec:main_proof}
In order to prove Theorem~\ref{thm:simple_decoupling_of_vs_full_decoupling}
it suffices, by Remark~\ref{remark:trivial_implication}, to prove that the second
statement in the theorem implies the first. 
In Section \ref{subsec:Proof:simple_decoupling_of_vs_full_decoupling} we will show that
this implication is an immediate corollary of Propositions~\ref{prop:extend}
and~\ref{prop:simple_to_Lp} below. 
Propositions~\ref{prop:extend} demonstrates that a 
progressive extension of the underlying filtration and an extension of the
set $\calP$ can be carried out. Proposition~\ref{prop:simple_to_Lp} uses 
Corollary \ref{cor:factor_through_subsigmaalgebra} to show that one can pass from relatively
simple sequences of random variables 
to the sequences considered in~\eqref{item:general_decoupling}
in Theorem~\ref{thm:simple_decoupling_of_vs_full_decoupling}. 


\subsection{Progressive enlargement of the filtration and extension of \texorpdfstring{$\calP$}{the set of admissible measures}}
The main result of this subsection is

\begin{proposition}\label{prop:extend}
Let $X$ be a Banach space,
let $\Phi \in C(X \times X;\R)$
be such that there exist 
constants $C,p\in (0,\infty)$ for which it holds that
\begin{equation}\label{eq:property_Phi}
  | \Phi(x,y) |
  \leq
  C( 1 + \| x \|_X^p + \| y \|_X^p)
\end{equation}
for all $ (x,y) \in X\times X$, and
let $\calP \subseteq \mathcal{P}_p(X)$ 
be a set of probability measures satisfying $\delta_0 \in \calP$.
Let $\calP_{p\textnormal{-ext}}$ be defined as in
Definition~\ref{definition:p-measure_extension}.
Then the following assertions are equivalent:
\begin{enumerate}
\item \label{item:v_adapted}
For every $N\in \N$, every stochastic basis $(\Omega,\calF,\P,(\calF_n)_{n=0}^N)$,
every sequence of independent random variables
$(\varphi_n)_{n=1}^N$ with $\calL(\varphi_n) \in \calP_{p\textnormal{-ext}}$
such that $\varphi_n$ is $\calF_{n}$ measurable
and independent of $\calF_{n-1}$,
and for all $A_n \in \calF_{n}$, $n \in \{0,\ldots,N-1\}$,
it holds that
\begin{equation*}
  \E
  \Phi\bigg(
      \sum_{n=1}^{N}
	\varphi_n 1_{A_{n-1}}
    ,
      \sum_{n=1}^{N}
	\varphi'_n 1_{A_{n-1}}
  \bigg)	
  \leq 0,
\end{equation*}
whenever $(\varphi_n')_{n=1}^N$ is a copy of $(\varphi_n)_{n=1}^N$
independent of $\calF_N$.

\item \label{item:v_func_of_psi}
For every $N\in \N$, every sequence of independent random variables
$(\varphi_n)_{n=0}^N$ on some probability space $(\Omega,\calF,\P)$
with $\calL(\varphi_n)\in \calP$,
and for all $A_n \in \calF_{n}^{\varphi}$, $n\in \{0,\ldots,N-1\}$,
it holds that
\begin{equation*}
  \E
  \Phi\bigg(
      \sum_{n=1}^{N}
	\varphi_n 1_{A_{n-1}}
    ,
      \sum_{n=1}^{N}
	\varphi'_n 1_{A_{n-1}}
  \bigg)
  \leq 0,
\end{equation*}
whenever $(\varphi_n')_{n=0}^N$ is an independent 
copy of $(\varphi_n)_{n=0}^N$.
\end{enumerate}
\end{proposition}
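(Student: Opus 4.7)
The direction (i) $\Rightarrow$ (ii) is immediate by specializing to $\calF_n := \calF_n^\varphi$ and noting $\calP \subseteq \calP_{p\textnormal{-ext}}$. For (ii) $\Rightarrow$ (i), the plan is to combine two reductions: a factorization of the extra $\calF_{n-1}$-measurable randomness in $1_{A_{n-1}}$, and an approximation of $\calL(\varphi_n) \in \calP_{p\textnormal{-ext}}$ by convolutions of $\calP$-measures.

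\textbf{Factorization and approximation.} On the enlarged space $\bar\Omega := \Omega \times (0,1]^{N-1}$ with $\bar\P := \P \otimes \lambda^{N-1}$, I would apply Corollary~\ref{cor:factor_through_subsigmaalgebra} iteratively, taking $d := 1_{A_{n-1}}$ and $\calG := \sigma(\varphi_1, \ldots, \varphi_{n-1}, 1_{A_0}, \ldots, 1_{A_{n-2}})$ and using the $n$-th auxiliary coordinate at step $n$, to obtain Borel functions of threshold form $g_{n-1}(\varphi, h) = 1_{\{h_{n-1} < q_{n-1}(\varphi_1, \ldots, \varphi_{n-1}, h_1, \ldots, h_{n-2})\}}$ together with i.i.d.\ uniform-$(0,1]$ random variables $H_1, \ldots, H_{N-1}$ on $\bar\Omega$ that are jointly independent of $(\varphi_n)_{n=1}^N$, such that $1_{A_{n-1}} = g_{n-1}(\varphi_1, \ldots, \varphi_{n-1}, H_1, \ldots, H_{n-1})$ $\bar\P$-a.s., with the joint law of $(\varphi_n, 1_{A_{n-1}})_{n=1}^N$ preserved. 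In parallel, by Definition~\ref{definition:p-measure_extension} I choose, for each $n$, measures $\mu_{n,j,1}, \ldots, \mu_{n,j,K_{n,j}} \in \calP$ such that $\mu_{n,j,1} * \cdots * \mu_{n,j,K_{n,j}} \stackrel{w^*}{\rightarrow} \calL(\varphi_n)$ uniformly $L^p$-integrably, realize independent $(\psi_{n,j,k})$ with $\calL(\psi_{n,j,k}) = \mu_{n,j,k}$ on an auxiliary space, and set $\varphi_n^{(j)} := \sum_k \psi_{n,j,k}$ (with analogous primed copies $\varphi'^{(j)}_n$).

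\textbf{Applying (ii) and taking limits.} For fixed $h \in (0,1]^{N-1}$, the set $A_{n-1}^{(j,h)} := \{g_{n-1}(\varphi^{(j)}, h) = 1\}$ is measurable with respect to the natural filtration of the lexicographically-ordered sequence $(\psi_{n,j,k})$ at the appropriate index, so (ii) yields
\[
 I_j(h) := \E \Phi\Bigl(\textstyle\sum_n \varphi_n^{(j)} 1_{A_{n-1}^{(j,h)}}, \; \sum_n \varphi'^{(j)}_n 1_{A_{n-1}^{(j,h)}}\Bigr) \leq 0.
\]
To pass to $j \to \infty$ I would approximate each Borel $q_{n-1}$ via Lusin's theorem and Tietze's extension by a continuous $\tilde q_{n-1}^{(\eps)}$ agreeing with $q_{n-1}$ on a compact set of $\calL(\varphi_1, \ldots, \varphi_{n-1})$-measure at least $1-\eps$; the resulting thresholded $\tilde g_{n-1}^{(\eps)}(\cdot, h)$ is continuous in $\varphi$ at every point where $\tilde q_{n-1}^{(\eps)}(\varphi, h_1, \ldots, h_{n-2}) \neq h_{n-1}$, a condition excluding a $\calL(\varphi)$-null set for Lebesgue-a.e.\ $h$ (since the distribution of $\tilde q_{n-1}^{(\eps)}(\varphi, \ldots)$ has at most countably many atoms). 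The continuous mapping theorem, uniform $L^p$-integrability of $(\varphi_n^{(j)})_j$, and Lemma~\ref{lem:PhiConv} then justify passage to $j \to \infty$ and subsequently to $\eps \to 0$, yielding $I(h) \leq 0$ for a.e.\ $h$, where $I(h) := \E \Phi(\sum_n \varphi_n 1_{A_{n-1}^{(h)}}, \sum_n \varphi'_n 1_{A_{n-1}^{(h)}})$ with $A_{n-1}^{(h)} := \{g_{n-1}(\varphi, h) = 1\}$.

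\textbf{Conclusion and obstacle.} Integrating the estimate $I(h) \leq 0$ against $\lambda^{N-1}$ and invoking Fubini together with the coupling from the factorization step gives
\[
 \E \Phi\Bigl(\textstyle\sum_n \varphi_n 1_{A_{n-1}}, \; \sum_n \varphi'_n 1_{A_{n-1}}\Bigr) = \int_{(0,1]^{N-1}} I(h)\, dh \leq 0,
\]
which is (i). The principal obstacle is the limit passage $j \to \infty$ for fixed $h$: the factorized indicators depend on $\varphi$ through Borel-but-generally-discontinuous conditional probability functions, forcing the Lusin--Tietze approximation, whose effectiveness rests on the threshold structure supplied by Corollary~\ref{cor:factor_through_subsigmaalgebra} together with the fact that Lebesgue-a.e.\ threshold $h_{n-1}$ avoids the (countable) atoms of the relevant conditional distribution.
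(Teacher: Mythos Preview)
Your approach is genuinely different from the paper's. The paper separates the implication into two stages: in step~(a) it upgrades assertion~(ii) from $\calP$ to $\calP_{p\text{-ext}}$ by a direct convolution argument combined with replacing each $A_{n-1}=\{(\varphi_j)_{j\le n-1}\in B_{n-1}\}$ by a $\calL(\varphi)$-continuity set (Lemmas~\ref{lemma:Bcont_approx} and~\ref{lemma:weak_approx}) so that Portmanteau handles the weak limit; in step~(b) it treats the filtration enlargement by a Maurey-type construction (Lemmas~\ref{lem:dyadic_aux}--\ref{lem:dyadic_stefan}): after discretizing the $\varphi_n$ via Lemma~\ref{lemma:approximation}, it \emph{inserts} between the $\varphi_n$'s finitely many auxiliary i.i.d.\ variables whose law lies in $\calP_{p\text{-ext}}$, chosen so that the natural filtration of the enlarged sequence contains $L^p$-approximations of the $1_{A_{n-1}}$. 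These auxiliary variables carry the indicator $1_\emptyset$ in the sum, so its value is unchanged while (ii) now applies. You instead use Corollary~\ref{cor:factor_through_subsigmaalgebra}---which the paper reserves for the separate Proposition~\ref{prop:simple_to_Lp}---to encode the extra randomness in uniforms $H_i$ and then condition on $H=h$. This is an appealing unification of the two reductions into a single limit.

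Two points need tightening. First, Corollary~\ref{cor:factor_through_subsigmaalgebra} only gives each $H_{n-1}$ independent of the particular $\calG$ you chose; the claim that $(H_1,\ldots,H_{N-1})$ are \emph{jointly} i.i.d.\ uniform and independent of $(\varphi,\varphi')$ requires an inductive argument you have not supplied (it helps to enlarge $\calG$ at step $n$ to contain the previously constructed $H_k$'s and all of $\calF_{n-1}$). Without this the final Fubini identity fails. Second, and more substantively: you obtain $I_j(h)\le 0$ from (ii) and then invoke Lusin--Tietze to pass to $j\to\infty$. But the Lusin set $K_\eps$ has large measure only under $\calL(\varphi)$, and Portmanteau on the closed set $K_\eps$ gives no uniform-in-$j$ control of $\calL(\varphi^{(j)})(K_\eps^c)$; hence $|I_j(h)-I_j^{(\eps)}(h)|$ is uncontrolled. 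The clean fix is to apply (ii) \emph{directly} to the Lusin-modified sets $\{\tilde q_{n-1}^{(\eps)}(\varphi^{(j)},h_{\le n-2})>h_{n-1}\}$---which are equally in $\calF^{\psi^{(j)}}_{\cdot}$---to get $I_j^{(\eps)}(h)\le 0$, pass to $j\to\infty$ via the continuous mapping theorem and Lemma~\ref{lem:PhiConv}, and only then send $\eps\to 0$, at which point the Lusin error is taken under the limit law $\calL(\varphi)$ where it is small by construction. With these two repairs your outline should go through.
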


Concerning the proof of 
Proposition~\ref{prop:extend},
it is obvious that we need only prove
\eqref{item:v_func_of_psi}
$\Rightarrow$
\eqref{item:v_adapted}.
For this we need a series of lemmas. Here
Lemmas~\ref{lem:dyadic_aux} and~\ref{lem:dyadic_stefan}
are obtained by an adaptation
of~\cite[Lemma 12.8]{Neerven:2008}, 
in which the dyadic setting is considered and 
which simplifies 
the procedure originally sketched in~\cite{Maurey:1975}.
\smallskip

Recall that a probability space 
$(\Omega,\calF,\P)$ is called 
\emph{divisible} if for every
$A \in \calF$ and every $\theta \in (0,1)$
there exists an $A_{\theta} \in \calF$
such that $A_{\theta} \subset A$
and $\P(A_{\theta})=\theta \P(A)$.

\begin{lemma}\label{lem:dyadic_aux}
Let $(\Omega,\calF,\P)$ be a divisible probability space,
$X$ be a separable Banach space, $F\in \calF$,
and let $\mu\in \calP(X)$ be of the 
form $\mu = \sum_{k=1}^{n} \alpha_k \delta_{x_k}$
for some $n\in \N$, $\alpha_1,...,\alpha_n\in (0,1)$, and some distinct 
$x_1,...,x_n\in X$.
Let $\calG\subseteq \calF$ be a $\sigma$-algebra generated by a finite 
partition $(A_i)_{i=1}^{k}$ of $\Omega$
with $\P(A_i)>0$. 
Then there exists an 
$\calF$-measurable, $\mu$-distributed random variable
$\varphi$ that is independent of $\calG$, for which 
there exist
$H_1,H_2\in 
\sigma(\calG,\varphi)$
satisfying $H_1 \subseteq F \subseteq H_2 $ and 
\[       \P(H_2\setminus H_1) 
    \leq  \left [ \max_{j\in \{1,\ldots,n\}} \alpha_j \right ] 
         \min\big\{ \P(G_2\setminus G_1)\colon G_1,G_2\in \calG, 
         G_1\subseteq F \subseteq G_2\big\}. \]
\end{lemma}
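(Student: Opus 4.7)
The plan is to construct $\varphi$ by partitioning each atom $A_i$ of $\calG$ into $n$ measurable pieces $B_{i,1},\ldots,B_{i,n}$ with $\P(B_{i,k}) = \alpha_k\P(A_i)$, to set $\varphi = x_k$ on $B_{i,k}$, and to choose the pieces so that as few of them as possible straddle $F$. Independence of $\varphi$ and $\calG$ will be automatic, since the ratio $\P(B_{i,k})/\P(A_i) = \alpha_k$ does not depend on $i$, and $\calL(\varphi)=\mu$ will follow by summing over $i$. Because the $x_k$ are distinct, I will have $B_{i,k} = A_i \cap \{\varphi = x_k\}$, so every $B_{i,k}$ lies in $\sigma(\calG,\varphi)$.

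The bins are produced by a greedy packing. After relabeling I may assume $\alpha_1 \geq \alpha_2 \geq \cdots \geq \alpha_n$. For a fixed atom $A_i$ I set $p_i := \P(F \cap A_i)/\P(A_i)$ and let $m_i$ be the largest $m \in \{0,\ldots,n\}$ with $\sum_{k=1}^{m}\alpha_k \leq p_i$. Using the divisibility of $(\Omega,\calF,\P)$, I successively carve disjoint subsets $B_{i,1},\ldots,B_{i,m_i} \subseteq F \cap A_i$ with $\P(B_{i,k}) = \alpha_k\P(A_i)$, then disjoint subsets $B_{i,m_i+2},\ldots,B_{i,n} \subseteq F^c \cap A_i$ with the prescribed masses, and finally let $B_{i,m_i+1}$ consist of whatever mass is left inside $F\cap A_i$ together with the matching remainder inside $F^c \cap A_i$. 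By construction $B_{i,m_i+1}$ is the only piece of the partition of $A_i$ that can meet both $F$ and $F^c$, and its mass is $\alpha_{m_i+1}\P(A_i) \leq \bigl(\max_j \alpha_j\bigr)\P(A_i)$. When $\sum_{k=1}^{m_i}\alpha_k = p_i$ exactly, or when $p_i \in \{0,1\}$, no straddling bin arises inside $A_i$.

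Next I define
\[ H_1 := \bigcup\{B_{i,k} : B_{i,k} \subseteq F\} \quad\text{and}\quad H_2 := \bigcup\{B_{i,k} : B_{i,k} \cap F \neq \emptyset\}, \]
which are both finite unions of elements of $\sigma(\calG,\varphi)$ and trivially satisfy $H_1 \subseteq F \subseteq H_2$. The difference $H_2 \setminus H_1$ consists precisely of the straddling bins $B_{i,m_i+1}$, so
\[ \P(H_2 \setminus H_1) \;\leq\; \Bigl(\max_{j}\alpha_j\Bigr) \sum_{i \,:\, A_i \not\subseteq F,\; A_i \cap F \ne \emptyset} \P(A_i). \]
The last sum equals $\min\{\P(G_2\setminus G_1):G_1,G_2\in\calG,\ G_1\subseteq F\subseteq G_2\}$, because the optimal $G_1$ is the union of atoms entirely inside $F$ while the optimal $G_2$ is the union of atoms that meet $F$; this yields the desired bound.

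The main obstacle is combinatorial bookkeeping: one has to check that the greedy ordering $\alpha_1 \geq \cdots \geq \alpha_n$ really forces at most one straddling bin per atom, and that each of the three carving steps in the previous paragraph is justified by the divisibility assumption. A minor but genuine nuisance is the presence of nonempty null-set intersections (for instance $\P(F\cap A_i) = 0$ while $F\cap A_i \neq \emptyset$), where no bin of positive mass can fit inside $F\cap A_i$ yet the set must still be assigned to some $B_{i,k}$; one simply dumps it into $B_{i,m_i+1}$, which then becomes straddling at no additional measure cost, so the estimate above is unaffected.
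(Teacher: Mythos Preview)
Your proof is correct and takes essentially the same approach as the paper's: partition each atom $A_i$ of $\calG$ into pieces $B_{i,k}$ of mass $\alpha_k\P(A_i)$ arranged so that at most one piece straddles $F$, and set $\varphi=x_k$ on $B_{i,k}$. The paper simply asserts that such a partition exists, whereas your greedy packing makes the construction explicit; incidentally, the decreasing ordering of the $\alpha_k$ is not actually needed for the argument (for any ordering, the same choice of $m_i$ forces exactly one straddling bin), and the symmetric null-set case $p_i=1$ with $A_i\not\subseteq F$ deserves the same one-line fix as the case $p_i=0$ you discuss.
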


\begin{proof}
As $(\Omega,\calF,\P)$ is divisible, we can
construct a partition 
$( A_{i,j} )_{\substack{ \scriptscriptstyle{ i\in \{1,\ldots,k\} } \\ \scriptscriptstyle{ j \in \{1,\ldots,n\} } } }$ 
of $\Omega$
with $A_{i,j} \in \calF$ for all $i,j$, such that
$A_i = \bigcup_{j=1}^{n} A_{i,j}$
for all $i\in \{1,\ldots,k\}$, and such that
$\P(A_{i,j}) = \alpha_j\P(A_i)$
for all $j\in \{1,\ldots,n\}$ and all $i\in \{1,\ldots,k\}$. 
\smallskip

The partition $(A_{i,j})_{\substack{ \scriptscriptstyle{ i\in \{1,\ldots,k\} } \\ \scriptscriptstyle{ j \in \{1,\ldots,n\} } } }$
is assumed to satisfy some conditions with respect to the set $F$
which we shall explain below. Before doing so,
we observe that given such a partition, the random variable 
$\varphi$ defined by $\varphi:= \sum_{i=1}^k \sum_{j=1}^n x_j \one_{A_{i,j}}$
has the law $\mu$ and is independent of $\calG = \sigma( ( A_i )_{i=1}^{k} )$,
and 
\[  \sigma(\calG,\varphi)
  = \sigma\big(\big\{ 
      A_{i,j}
      \colon 
      i\in \{1,\ldots,k\},
      j\in \{1,\ldots,n
  \big\}\big). \]
Let $I_{0}\subseteq \{1,\ldots,k\}$ be such that $i\in I_0$ if and only if $ 
A_i 
\cap F = \emptyset $, 
and $I_{1}\subseteq \{1,\ldots,k\}$ be such that  $i\in I_1$ if and only if $ 
A_i \subseteq F$.
Set $I_{\operatorname{mix}} = \{1,\ldots,k\}\setminus (I_0\cup I_1\}$
(one or two of the sets $I_0,I_1, I_{\operatorname{mix}}$ may be empty).
Observe that
\begin{align}\label{eq:same_sets}
 \sum_{i\in I_{\operatorname{mix}}} \P( A_i )
 = \min\big\{ 
  \P(G_2\setminus G_1)
    \colon G_1,G_2\in \calG, 
    G_1\subseteq F \subseteq G_2
 \big\}.
\end{align}
For $i\in I_0\cup I_1$ we simply partition the set $A_i$ into sets 
$(A_{i,j})_{j=1}^{n}$ that satisfy $A_{i,j} \in \calF $ and 
\[ \P(A_{i,j}) = \alpha_j\P(A_i) \]
for all $j\in \{1,\ldots,n\}$.
For $i\in I_{\operatorname{mix}}$ we choose 
the partition $(A_{i,j})_{j=1}^{n}$
not only such that it satisfies $A_{i,j} \in \calF$ 
and 
\[ \P(A_{i,j}) = \alpha_j\P(A_i) \]
for all $j\in \{1,\ldots,n\}$,
but also such that there is  {\em at most one}
$j\in \{1,\ldots,n\}$ such that 
$\emptyset \neq F \cap A_{i,j} \subsetneq  A_{i,j}$.
It follows from this construction and from~\eqref{eq:same_sets} that
\begin{equation*}
 \begin{aligned}
 & \min\big\{ 
    \P(H_2\setminus H_1)
    \colon 
    H_1,H_2 \in \sigma(\calG,\varphi), 
    H_1\subseteq F \subseteq H_2
  \big\} 
\\
  & \le 
    \sum_{i\in I_{\operatorname{mix}}}  
    \left [
    \max_{j\in \{1,\ldots,n\}} \alpha_j \right ]
    \P(A_i) 
\\
  & = 
    \left [ \max_{j\in \{1,\ldots,n\} }\alpha_j \right ]
    \min\big\{ 
      \P(G_2\setminus G_1)
      \colon 
      G_1,G_2\in \calG, 
      G_1\subseteq F \subseteq G_2
    \big\}.
 \end{aligned}
\end{equation*}
\end{proof}

\begin{lemma}\label{lem:dyadic_stefan}
Let $(\Omega,\calF,\P)$ be a divisible probability space, $X$ be a separable Banach space 
and 
let $\mu\in \calP(X)$ be of the 
form $\mu = \sum_{k=1}^{n} \alpha_k \delta_{x_k}$
for some $n\in \N$, $\alpha_1,...,\alpha_n\in (0,1)$, and some distinct 
$x_1,...,x_n\in X$.
Let $\calG\subseteq \calF$ be a 
$\sigma$-algebra generated by a finite  partition of atoms with positive measure.
Then for every $A\in \calF$ and 
every $\eps>0$ there exists an $m\in \N$ and 
$\calF$-measurable independent $\mu$-distributed
random variables $(\varphi_1,\ldots,\varphi_m)$
that are independent of $\calG$ such that 
there exists an  
$A_{\eps} \in \sigma(\calG, \varphi_1,\ldots,\varphi_m)$
satisfying
$\E | 1_{A} - 1_{A_{\eps}} | <\eps$.
\end{lemma}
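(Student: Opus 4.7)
The plan is to iterate Lemma~\ref{lem:dyadic_aux}, reducing the ``approximation gap'' between $A$ and the generated $\sigma$-algebra by a uniform multiplicative factor at each step until it drops below $\eps$.

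Set $\alpha := \max_{j\in\{1,\ldots,n\}} \alpha_j$. Since each $\alpha_j \in (0,1)$ and $\sum_{j} \alpha_j = 1$, one must have $n\ge 2$, so $\alpha \in (0,1)$. For a sub-$\sigma$-algebra $\calH\subseteq \calF$ generated by a finite partition and a set $B\in \calF$, set
\[
\delta(B,\calH) := \min\{\P(G_2\setminus G_1) \colon G_1,G_2\in \calH,\ G_1\subseteq B\subseteq G_2\}.
\]
Construct $\varphi_1,\varphi_2,\ldots$ inductively as follows. Put $\calG_0 := \calG$. Given that $\calG_{i-1}$ is generated by a finite partition of atoms of positive measure, apply Lemma~\ref{lem:dyadic_aux} with $F := A$ and $\calG_{i-1}$ in place of $\calG$. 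This yields an $\calF$-measurable, $\mu$-distributed random variable $\varphi_i$ that is independent of $\calG_{i-1}$ and satisfies
\[
\delta(A, \sigma(\calG_{i-1},\varphi_i)) \le \alpha\,\delta(A,\calG_{i-1}).
\]
Set $\calG_i := \sigma(\calG_{i-1},\varphi_i)$; its atoms are the intersections of the atoms of $\calG_{i-1}$ with $\{\varphi_i = x_j\}$, and by independence they have positive probability $\alpha_j \P(A)$ for each atom $A$ of $\calG_{i-1}$. Thus the inductive hypothesis is preserved at each step.

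Iterating gives $\delta(A,\calG_m) \le \alpha^m \delta(A,\calG_0) \le \alpha^m$. Choose $m\in\N$ so that $\alpha^m < \eps$. Then there exist $H_1,H_2\in \calG_m = \sigma(\calG,\varphi_1,\ldots,\varphi_m)$ with $H_1\subseteq A\subseteq H_2$ and $\P(H_2\setminus H_1) < \eps$. Setting $A_\eps := H_1$, we obtain $|\one_A - \one_{A_\eps}| \le \one_{H_2\setminus H_1}$, hence $\E|\one_A - \one_{A_\eps}| < \eps$. Joint independence of $(\varphi_1,\ldots,\varphi_m)$ and independence of this family from $\calG$ are automatic: at each step $\varphi_i$ is independent of $\calG_{i-1} = \sigma(\calG,\varphi_1,\ldots,\varphi_{i-1})$, which by a standard argument yields joint independence of the whole family together with $\calG$.

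The substantive content sits entirely in Lemma~\ref{lem:dyadic_aux}; here the only real item to verify is that $\calG_i$ remains generated by a finite partition into atoms of strictly positive probability, so that Lemma~\ref{lem:dyadic_aux} remains applicable at the next step. This is the main (minor) obstacle and is handled by the observation on atoms above. The strict inequality $\alpha<1$ is exactly what drives the geometric decay and is the reason one needs each $\alpha_k$ to lie in $(0,1)$ rather than just in $[0,1]$.
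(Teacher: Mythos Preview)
Your proof is correct and follows essentially the same approach as the paper: both iterate Lemma~\ref{lem:dyadic_aux}, use the factor $\alpha=\max_j\alpha_j\in(0,1)$ to obtain geometric decay $\delta(A,\calG_m)\le\alpha^m$, and take $A_\eps$ to be the inner approximating set. Your write-up even makes explicit a couple of points the paper leaves implicit (that $n\ge2$ forces $\alpha<1$, and that the atoms of $\calG_i$ have positive measure so the induction can continue).
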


\begin{proof} 
Let $A\in \calF$ and $\varepsilon > 0$ be given.
Define $\delta := \max_{j\in \{1,\ldots,n\}} \alpha_j\in (0,1)$ and let $m\in \N$ be 
such that $\delta^m<\varepsilon $.\medskip

\textbf{Step 1.} Apply Lemma~\ref{lem:dyadic_aux} with 
$\calG$ and $\calF$ as given to find an
$\calF$-measurable, $\mu$-distributed random variable
$\varphi_1$ that is independent
of $\calG$, and sets $H_{1,1},H_{1,2}\in \sigma(\calG,\varphi_1)$
such that
$ H_{1,1} \subseteq A \subseteq H_{1,2} $ and
\[
  \P(H_{1,2}\setminus H_{1,1}) \leq \delta.
\]
Define $\calG_1:=\sigma(\calG,\varphi_1)$ that is, by construction, a $\sigma$-algebra generated 
by a finite partition of sets of positive measure.
\smallskip

\textbf{Step $i$, $i=2,\ldots,m$.}
Apply Lemma~\ref{lem:dyadic_aux} 
with 
$\calG := \calG_{i-1}$, and with $\calF$ as given, to find an
$\calF$-measurable, $\mu$-distributed
random variable $\varphi_i$ that is independent
of $\calG_{i-1}$, and sets 
$ H_{i,1}, H_{i,2} \in \sigma(\calG_{i-1},\varphi_i) $
such that
$ H_{i,1} \subseteq A \subseteq H_{i,2} $ and
\[
  \P(H_{i,2}\setminus H_{i,1}) 
  \leq \delta \P(H_{i-1,2}\setminus H_{i-1,1})
  \leq \delta^i.
\]
Set $\calG_{i}:=\sigma(\calG_{i-1},\varphi_i)$.
\medskip

We have now obtained a sequence of independent,
$\calF$-measurable, $\mu$-distributed random variables 
$(\varphi_1,\ldots,\varphi_m)$ that are independent of $\calG$, and
 sets $H_{m,1}, H_{m,2}\in
\calG_m=\sigma(\calG,\varphi_1,\ldots,\varphi_m)$
such that $H_{m,1} \subseteq A \subseteq H_{m,2}$
and 
\[
  \P(H_{m,2}\setminus H_{m,1}) 
  \leq \delta^m < \varepsilon.
\]
Setting $A_{\eps}= H_{m,1}$ we obtain 
that $\E |  1_A - 1_{A_{\eps}} | <\varepsilon$.
\end{proof} 

\begin{lemma}
\label{lemma:approximation} 
Let $X$ be a separable Banach space, $p\in (0,\infty)$, let \mbox{$\Phi\in C( X\times X , \R)$}
satisfy~\eqref{eq:property_Phi} for all $(x,y)\in X\times X$, 
let $N\in \N$, and let 
$\mu_1,\ldots,\mu_N\in \calP_p(X)$. 
Then for all $\eps>0$ there exists a measurable
mapping $P_{\eps}\colon X \rightarrow X$ with finite range such that
for every sequence of independent random variables  
$(\varphi_1,\ldots,\varphi_N,\varphi_1',\ldots,\varphi_N')$  
on a probability space $(\Omega,\calF,\P)$ such that 
$\calL(\varphi_n)=\calL(\varphi_n')=\mu_n$, $n\in \{1,\ldots,N\}$,
and for all $F_1,\ldots,F_N\in \calF$ it holds that 
\begin{equation}\label{eq:PhiCont}
  \E
  \left|  
    \Phi
    \left( 
      \sum_{n=1}^{N}  \varphi_n  1_{F_n}, 
      \sum_{n=1}^{N}  \varphi_n' 1_{F_n}
    \right)
  -
    \Phi
    \left( 
      \sum_{n=1}^{N} P_{\eps}(\varphi_n ) 1_{F_n}, 
      \sum_{n=1}^{N} P_{\eps}(\varphi_n') 1_{F_n}  
    \right)  
 \right|
 <
 \eps.
\end{equation}\par 
Moreover, if for some $n\in \{1,\ldots,N\}$ it holds that 
$\mu_n$ is not a Dirac measure, then $P_{\eps}$ 
may be chosen such that $\mu_n\circ P_{\eps}^{-1}$ is not a Dirac measure.
\end{lemma}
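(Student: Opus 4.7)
The plan is to construct $P_\eps$ as a finite-range Borel map by combining an $\calL^p$-tightness cutoff with a separability-based fine partition. Concretely, for auxiliary parameters $\eta_0,\eta>0$ to be chosen, I would first use $\mu_n\in\calP_p(X)$ to obtain a single compact $K\subseteq X$ with $\mu_n(K^c)<\eta_0$ and $\int_{K^c}\|x\|_X^p\,d\mu_n(x)<\eta_0$ for every $n\in\{1,\dots,N\}$; then, using separability of $X$, partition $K$ into finitely many Borel cells $B_1,\dots,B_M$ of diameter below $\eta^{1/p}$, pick representatives $y_i\in B_i$, and set $P_\eps(x):=y_i$ on $B_i$ and $P_\eps(x):=0$ elsewhere. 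The resulting map has finite range contained in $K\cup\{0\}$ and satisfies $\int_X\|x-P_\eps(x)\|_X^p\,d\mu_n(x)\le\eta+\eta_0$ for every $n$.

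Writing $U:=\sum_{n=1}^N\varphi_n1_{F_n}$, $\tilde U:=\sum_{n=1}^N P_\eps(\varphi_n)1_{F_n}$, and analogously $V,\tilde V$, the elementary bound $\|\sum_n a_n\|^p\le N^{(p-1)_+}\sum_n\|a_n\|^p$ gives $\E\|U-\tilde U\|_X^p\le(\eta+\eta_0)N^{1+(p-1)_+}$, uniformly in the $F_n$'s and in the coupling of $(\varphi_n,\varphi_n')$. Since the range of $P_\eps$ sits in $K\cup\{0\}$, both $\tilde U$ and $\tilde V$ always belong to the compact set $\hat K:=\{x_1+\cdots+x_N:x_i\in K\cup\{0\}\}$; a union bound yields $\P(U,V\in\hat K)\ge 1-2N\eta_0$. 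Moreover, the pointwise domination $\|\tilde U\|_X^p\le 2^{(p-1)_+}(\|U\|_X^p+\|U-\tilde U\|_X^p)$ supplies coupling-independent uniform integrability of $\|\tilde U\|_X^p$ (and similarly for the other three), with dominating law depending only on $\mu_1,\ldots,\mu_N$, $p$, and $N$.

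I would then split $\E|\Phi(U,V)-\Phi(\tilde U,\tilde V)|$ over the good event $G:=\{U,V\in\hat K\}$ and its complement. The bad-event contribution is controlled via the growth condition on $\Phi$, the bound $\P(G^c)\le 2N\eta_0$, and the uniform integrability from the previous paragraph; a truncation argument yields a bound of the form $C'(\eta_0(1+L)+\eps''+\eta)$, where $\eps''$ is proportional to $\eps$ and $L$ depends only on $\eps''$, $\mu_1,\ldots,\mu_N$, $N$, $p$. On the good event, continuity of $\Phi$ upgrades to uniform continuity on the compact $\hat K\times\hat K$ with some modulus $\omega_0$ and sup-norm $B_0$; after choosing $\alpha>0$ with $\omega_0(\alpha)<\eps/8$, Markov's inequality applied to the $L^p$-bound from the previous paragraph gives $\E[|\Phi(U,V)-\Phi(\tilde U,\tilde V)|1_G]\le\omega_0(\alpha)+B_0 C''(\eta+\eta_0)/\alpha^p$. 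One fixes $\eta_0$ from the bad-event constraint (which involves only the universal constant $L$), and only then chooses $\eta$ small enough to absorb the good-event term. For the non-Dirac clause, refining the partition so that two distinct points of $\operatorname{supp}(\mu_n)$ lie in different cells guarantees $\mu_n\circ P_\eps^{-1}$ charges at least two distinct atoms.

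The main obstacle is that $\Phi$ is merely continuous on $X\times X$, which in infinite-dimensional $X$ need not imply uniform continuity even on norm-bounded sets. Placing the range of $P_\eps$ inside a compact set obtained via tightness of the $\mu_n$'s ensures $\Phi$ is automatically uniformly continuous on the image; the coupling-independent uniform integrability of the dominating random variables then upgrades this to a bound uniform in the $F_n$'s and in the coupling, which is the subtle uniformity required by the lemma.
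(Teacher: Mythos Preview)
Your overall strategy coincides with the paper's: truncate to a compact set, partition it into small cells, define $P_\eps$ cell-wise, and split the expectation into a ``good'' event (all variables near $K$) and a ``bad'' event handled via the growth bound on $\Phi$. You are in fact more careful than the paper on one point: you correctly invoke uniform continuity of $\Phi$ on the sumset $\hat K\times\hat K$, whereas the paper literally writes $K\times K$ even though the sums $\xi,\xi_\eps$ live in $\hat K$.

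There is, however, a genuine circularity in your parameter selection on the good event. You take $G=\{U,V\in\hat K\}$ and control the good-event contribution by Markov's inequality, obtaining
\[
  \E\bigl[|\Phi(U,V)-\Phi(\tilde U,\tilde V)|\,1_G\bigr]\le \omega_0(\alpha)+B_0 C''(\eta+\eta_0)/\alpha^p .
\]
You then propose to fix $\eta_0$ first (from the bad-event constraint) and choose $\eta$ afterwards. But $B_0=\sup_{\hat K\times\hat K}|\Phi|$ and the admissible $\alpha$ both depend on $\hat K$, hence on $K$, hence on $\eta_0$; once $\eta_0$ is fixed the residual term $B_0 C''\eta_0/\alpha^p$ is a fixed number over which you have no remaining control, and there is no reason it is below $\eps$. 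Shrinking $\eta_0$ further enlarges $K$, which may enlarge $B_0$ and shrink $\alpha$, so iteration does not obviously help.

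The fix is exactly what the paper does: take as good event $K_\varphi:=\{\varphi_n,\varphi_n'\in K\text{ for all }n\}$ rather than your $G$. On $K_\varphi$ one has the \emph{pointwise} bound $\|U-\tilde U\|_X\le N\eta^{1/p}$ (and likewise for $V$), so the good-event contribution is at most $\omega_0(2N\eta^{1/p})$, which depends only on $\eta$ and can be made small \emph{after} $\eta_0$, $K$, $\hat K$, $\omega_0$ are all fixed. The bad-event analysis on $K_\varphi^c$ goes through exactly as you sketched (and $\P(K_\varphi^c)\le 2N\eta_0$ is the same bound you already have for $G^c$). With this one change your argument is complete, and your treatment of the non-Dirac clause is fine and matches the paper's.
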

 
\begin{proof}
Fix $\eps>0$, set $M_{p} = \sum_{n=1}^{N} \int_{X} \| x \|_X^p \,d\mu_n(x)$ 
and let $K\subseteq X$ be a compact set such that 
\begin{equation}\label{eq:Kchoice}
 \sup_{n \in \{ 1,\ldots,N\} }
  \int_{K^c} (1 + N^p M_{p} + N^p \| x \|_{X}^p) \,d\mu_n(x)
 < 2^{-(p-1)^+}(8CN)^{-1} \eps,
\end{equation}
where $C$ is as in~\eqref{eq:property_Phi}. (Note that such a set $K$ exists
as $X$ is separable and hence $\mu_1,\ldots,\mu_N$ are Radon measures, and moreover $\mu_1,\ldots,\mu_N \in P_p(X)$.)
It follows that for $K^N = K\times \ldots \times K$ ($N$ times) one has 
\begin{equation}\label{eq:constructK}
\begin{aligned}
&   \int_{(K^N)^c} 
    \left(
      1+ N^{p}\sum_{n=1}^{N} \| x_n \|_X^p
    \right)
  \,d\mu_1(x_1)\ldots d\mu_N(x_N) 
\\  &
  \leq 
  \sum_{j=1}^{N}
  \int_{\{x_j\in K^c\}} 
    \left(
    1+N^p \sum_{n=1}^{N} \| x_n \|_X^p
    \right)
  \,d\mu_1(x_1)\ldots d\mu_N(x_N) 
\\ &
  \leq 
  \sum_{j=1}^{N}
  \int_{\{x_j\in K^c\}} 
  \left(
    1+N^p \| x_j \|_X^p
  \right)
  \,d\mu_1(x_1)\ldots d\mu_N(x_N)
\\ & \quad 
  +
  \sum_{j=1}^{N}
  \sum_{n=1, n\neq j}^{N}
  \int_{\{x_j\in K^c\}} 
      N^p \| x_n \|_X^p
  \,d\mu_1(x_1)\ldots d\mu_N(x_N)
\\ &
  \leq
  \sum_{j=1}^{N}
  \int_{\{x_j\in K^c\}} 
  \left(
    1 + N^p M_{p} + N^p \| x_j \|_X^p
  \right)
  \,d\mu_1(x_1)\ldots d\mu_N(x_N)
\\ & < 2^{-(p-1)^+}
  (8C)^{-1}\eps.
\end{aligned}
\end{equation}

As $\Phi$ is continuous, it is uniformly continuous on $K\times K$ and hence 
there exists a $\delta\in (0,\infty)$ such that if $x_1,y_1,x_2,y_2\in K$ and 
$\| x_1 - x_2 \|_X<\delta$, $\| y_1 - y_2 \|_X < \delta$, then 
it holds that $|\Phi(x_1,y_1) - \Phi(x_2,y_2) | < \frac{\eps}{2}$.
Note that without loss of generality we may assume that $\delta \leq N^{-1/p}$ and that $K\neq \emptyset$.
Now let $M\in \N$ and $\{U_{1},\ldots,U_{M}\}\subseteq \calB(X)$ be a partition of $K$
such that for all $m\in \{1,\ldots,M\}$ it holds that $U_m\neq \emptyset$
and 
\begin{equation*}
  \sup_{m\in \{1,\ldots,M\} }
  \sup_{x,y\in U_m}
  \| x-y \|_X <  N^{-1} \delta.
\end{equation*}
Let $x_1,\ldots,x_M\in X$ be such that $x_m\in U_m$, $m\in \{1,\ldots,M\}$.
Let $x_0\in \{x\in X \colon \| x \|_{X} = N^{-1} \delta \}\setminus \{x_1,\ldots,x_M\}$ 
(this will be important for the last part of the proof of the lemma).
Define $P_{\eps} \colon X \rightarrow X$ by 
\begin{equation}
 P_{\eps}(x)
 =
 \begin{cases}
 x_m; & x\in U_m, \\
 x_0; & x\not\in K.
 \end{cases}
\end{equation}
Observe that by construction for all $x\in X$
it holds that 
\begin{equation}\label{eq:Pepsbound}
 \| P_{\eps}(x) \|_{X}
 \le 
 \| x \|_{X} + N^{-1}\delta
 \leq 
  \| x \|_{X} + N^{-(1+\frac{1}{p})}
\end{equation}
and for all $x\in K$ it holds that 
\begin{equation}\label{eq:Pepsapprox}
 \| x - P_{\eps}(x) \|_{X}
 < 
 N^{-1}\delta.
\end{equation}

We verify that $P_{\eps}$ satisfies the desired properties.
Indeed clearly $P_{\eps}$ has finite range. Moreover, let $(\Omega,\calF,\P)$ be a probability space 
and let $(\varphi_1,\ldots,\varphi_N,\varphi_1',\ldots,\varphi_N')$ be 
random variables on this space such that 
$\calL(\varphi_n)=\calL(\varphi_n')=\mu_n$, $n\in \{1,\ldots,N\}$,
and let $F_1,\ldots,F_N\in \calF$. For simplicity of notation 
define 
$\xi  = \sum_{n=1}^{N} \varphi_n  1_{F_n}$, 
$\xi' = \sum_{n=1}^{N} \varphi_n' 1_{F_n}$,
$\xi_{\eps}  = \sum_{n=1}^{N}  P_{\eps}(\varphi_n)  1_{F_n}$, and 
$\xi_{\eps}' = \sum_{n=1}^{N}  P_{\eps}(\varphi_n') 1_{F_n}$.
Define 
\begin{equation*}
  K_{\varphi} = 
  \{ 
    \omega \in \Omega 
    \colon 
    (\varphi_1,\ldots,\varphi_N,\varphi_1',\ldots,\varphi_N')
    \in K^{2N} 
   \} .
\end{equation*}
Observe that by~\eqref{eq:Pepsapprox} for $\omega \in K_{\varphi}$ it holds that 
\begin{equation*}
  \| \xi(\omega) - \xi_{\eps}(\omega) \|_X 
  \leq \sum_{n=1}^{N} \| \varphi_n - P_{\eps}(\varphi_n) \|_{X} 
  < \delta,
\end{equation*}
and similarly $\| \xi'(\omega) - \xi'_{\eps}(\omega) \|_X < \delta$, whence 
for all $\omega\in K_{\varphi}$ it holds that 
\begin{equation}\label{eq:Phi_est_1}
 | 
  \Phi( \xi(\omega) , \xi'(\omega) ) 
  -
  \Phi( \xi_{\eps}(\omega), \xi'_{\eps}(\omega) )
 |
 <  
 \frac{\eps}{2}.
\end{equation}
By the estimate above, Assumption~\eqref{eq:property_Phi}, and inequalities 
\eqref{eq:Pepsbound} 
and~\eqref{eq:constructK} it now follows
that
\equa
&   & \hspace*{-5em} \E | \Phi( \xi, \xi') - \Phi( \xi_{\eps}, \xi_{\eps}' )| \\
& = &
  \int_{K_{\varphi}} 
    | 
    \Phi( \xi, \xi')
    -
    \Phi( \xi_{\eps}, \xi_{\eps}' )
    |
  \,d\P
  +
  \int_{K_{\varphi}^c}
    | 
    \Phi( \xi, \xi')
    -
    \Phi( \xi_{\eps}, \xi_{\eps}' )
    |
  \,d\P
\\
& < &
  \frac{\eps}{2} 
  + 
  C
  \int_{K_{\varphi}^c}
    ( 2 + \| \xi \|_X^p + \| \xi' \|_X^p 
     + \| \xi_{\eps} \|_X^p + \| \xi'_{\eps} \|_X^p )
  \,d\P
\\
& \leq &
  \frac{\eps}{2} 
  + 
  2C
  \int_{K_{\varphi}^c}
    \left(
      1 
      + N^{p} 
      \left(
	\sum_{n=1}^{N} \| \varphi_n \|_X^p  
	+ \sum_{n=1}^{N} \| P_{\eps}(\varphi_n) \|_X^p 
      \right)
     \right)
  \,d\P
\\ &
 \leq &
 \frac{\eps}{2}
 +
  2 ^{(p-1)^+} 4C
  \int_{K_{\varphi}^c}
    \left(
      1 
      +  N^{p} \sum_{n=1}^{N} \| \varphi_n \|_X^p  
    \right)
  \,d\P
 < \eps.
\tion
Recalling the definition of $\xi, \xi', \xi_{\eps}$ and $\xi_{\eps}'$ 
this completes the proof of estimate~\eqref{eq:PhiCont}.
\smallskip

In order to prove the final statement in the lemma, we make some 
minor adjustments to the proof above. Indeed, suppose that for some $n\in \{1,\ldots,N\}$ it holds that 
$\mu_n$ is not a Dirac measure. It follows that there exists a compact set $F\in \mathcal{B}(X)$
such that $\mu_n(F)\in (0,1)$. Now proceed as above, but with the 
additional assumption that the set $K$ satisfying~\eqref{eq:Kchoice}
also satisfies $F\subseteq K$, and that the partition $\{U_1,\ldots,U_M\}$ is chosen 
such that $U_m \cap F \in \{U_m,\emptyset\}$ for all $m\in \{1,\ldots,M\}$. As $x_0,x_1,\ldots,x_M$
are all distinct values by construction, this ensures that there exists a set $G\in \mathcal{B}(X)$
such that $P_{\eps}^{-1}(G)=F$.
\end{proof}

\begin{lemma}\label{lemma:Bcont_approx}
Let $X$ be a separable Banach space, $\mu\in \calP(X)$, 
and let 
$
  \calB_{\mu\textnormal{-cont}}(X) 
  = 
  \{ B \in \calB(X) \colon \mu(\partial B) = 0 \}.
$
Then for all $B\in \calB(X)$ and all $\eps>0$
there exists an $B_{\eps}\in \calB_{\mu\textnormal{-cont}}(X)$
such that $ \mu( B \triangle B_{\eps} ) < \eps$.
\end{lemma}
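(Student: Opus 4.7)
Let $\calA\subseteq \calB(X)$ denote the collection of Borel sets that can be approximated in the symmetric-difference sense by $\mu$-continuity sets. The plan is to show that $\calA$ is a $\sigma$-algebra containing the open sets. Since $\calB(X)$ is generated by the open sets, this will give $\calA=\calB(X)$ and prove the lemma.

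The closure properties of $\calA$ rest on elementary topology: $\partial(A^c)=\partial A$, $\partial(A\cup B)\subseteq \partial A\cup \partial B$, and $\partial(A\cap B)\subseteq \partial A\cup \partial B$. Hence $\calB_{\mu\textnormal{-cont}}(X)$ is a Boolean algebra, which immediately shows that $\calA$ is closed under complements and finite unions/intersections. To pass to countable unions, if $B=\bigcup_{n\in\N} B_n$ with $B_n\in \calA$, one first chooses $N$ large enough so that $\mu(B\setminus \bigcup_{n\leq N} B_n)<\eps/2$ (using $\sigma$-additivity of $\mu$), and then approximates the finite union $\bigcup_{n\leq N} B_n\in \calA$ by a continuity set within $\eps/2$. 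The triangle inequality for the symmetric-difference pseudometric $(A,B)\mapsto \mu(A\triangle B)$ completes the argument.

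It remains to verify that every open set $U\subseteq X$ lies in $\calA$. Here the key tool is the continuous function $f\colon X\to[0,\infty)$ defined by $f(x):=d(x,U^c)$, which satisfies $U=\{f>0\}$. For each $t>0$ set $U_t:=\{f>t\}$. Since $f$ is continuous one has $\partial U_t\subseteq\{f=t\}$, and the sets $\{f=t\}$ are pairwise disjoint as $t$ varies, so only countably many of them can carry positive $\mu$-mass. Moreover $U\setminus U_t\downarrow \emptyset$ as $t\downarrow 0$, so by $\sigma$-continuity of $\mu$ one can choose $t_0>0$ with both $\mu(U\setminus U_{t_0})<\eps$ and $\mu(\{f=t_0\})=0$; the set $U_{t_0}$ is then the desired continuity set with $\mu(U\triangle U_{t_0})<\eps$.

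I do not expect any genuine obstacle: all three steps are standard. The only point that requires a moment of care is the passage from finite to countable unions inside $\calA$, since the class $\calB_{\mu\textnormal{-cont}}(X)$ itself is not closed under countable unions; the remedy is the two-step approximation using $\sigma$-additivity of $\mu$ described above.
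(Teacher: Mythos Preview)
Your proof is correct and follows essentially the same strategy as the paper: define the collection $\calA$ of Borel sets approximable by $\mu$-continuity sets, show it has enough closure properties and contains a generating class for $\calB(X)$. The paper's proof is terser---it only verifies that $\calA$ is a Dynkin system containing the closed sets and then invokes the $\pi$-$\lambda$ Theorem (closed sets form a $\pi$-system)---whereas you prove directly that $\calA$ is a full $\sigma$-algebra containing the open sets, which requires your additional two-step argument for arbitrary countable unions. Your route is slightly longer but more self-contained; the paper's is more economical but relies on the $\pi$-$\lambda$ machinery. Your treatment of open sets via the distance function $f(x)=d(x,U^c)$ and the level sets $\{f>t\}$ is exactly the standard way to produce the needed continuity sets (and is presumably what the paper has in mind for closed sets, up to complementation).
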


\begin{proof}
Define
\begin{equation*}
 \mathcal{A}
 =
 \left\{ 
  B\in \mathcal{B}(X)
  \colon
  \forall \eps>0\, \exists B_{\eps}\in \calB_{\mu\textnormal{-cont}}(X) 
  \text{ such that }
  \mu( B \triangle B_{\eps} ) < \eps
 \right\}.
\end{equation*}
One may check that $\mathcal{A}$ is a Dynkin-system and that $\mathcal{A}$
contains all closed sets, whence the result follows by the $\pi$-$\lambda$-Theorem.
\end{proof}

\begin{lemma}\label{lemma:weak_approx}
For $N\in \N$ and a separable Banach space $X$ let 
$\varphi_1,\ldots,\varphi_N:\Omega \to X$ and
$\varphi_{1,k},\ldots,\varphi_{N,k}:\Omega_k \to X$, $k\in \N$, be families 
of independent random variables with
\[ \mbox{w*-}\lim_{k\to \infty} \varphi_{n,k}= \varphi_n
   \sptext{1}{for}{1} n\in \{1,\ldots,N\}. \]
Let $(\varphi'_n)_{n=1}^N:\Omega\to X^N$ and $(\varphi'_{n,k})_{n=1}^{N}:\Omega_k\to X^N$ be 
independent copies of $(\varphi_n)_{n=1}^N$ and $(\varphi_{n,k})_{n=1}^N$, respectively,
$v_0\in \R$, and  $B_n \in \calB(X^n)$ such that
$\P( (\varphi_j)_{j=1}^n \in \partial B_n) =0$
for $n\in \{1,\ldots,N-1\}$. Then, for the $w^*$-convergence in $X\times X$ it holds that 
\begin{multline*}
  \hspace*{-.8em}
  \mbox{w*-}\lim_{k\to\infty} \left (
       \varphi_{1,k} v_0 + \sum_{n=2}^{N} \varphi_{n,k}  1_{ \{ ( \varphi_{j,k} )_{j=1}^{n-1} \in B_{n-1} \}} ,
       \varphi_{1,k}'v_0 + \sum_{n=2}^{N} \varphi_{n,k}' 1_{ \{ ( \varphi_{j,k} )_{j=1}^{n-1} \in B_{n-1} \}} 
    \right ) \\
=
 \left ( 
        \varphi_{1} v_0 + \sum_{n=2}^{N}  \varphi_{n}  1_{ \{ ( \varphi_{j} )_{j=1}^{n-1} \in B_{n-1} \}},
        \varphi_{1}'v_0 + \sum_{n=2}^{N}  \varphi_{n}' 1_{ \{ ( \varphi_{j} )_{j=1}^{n-1} \in B_{n-1} \}} 
    \right ). 
\end{multline*}
\end{lemma}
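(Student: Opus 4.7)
\textbf{Proof sketch for Lemma \ref{lemma:weak_approx}.}
The plan is to reduce the claim to a single application of the continuous mapping theorem on the space $X^{2N}$. First I would upgrade the one-dimensional weak convergences to the joint weak convergence
\[
  (\varphi_{1,k},\ldots,\varphi_{N,k},\varphi'_{1,k},\ldots,\varphi'_{N,k})
  \stackrel{w^*}{\rightarrow}
  (\varphi_1,\ldots,\varphi_N,\varphi'_1,\ldots,\varphi'_N)
\]
in $X^{2N}$. Since the $\varphi_{n,k}$ are independent for each $k$ and the $\varphi_n$ are independent, the product measure $\calL(\varphi_{1,k})\otimes\cdots\otimes\calL(\varphi_{N,k})$ converges weakly to $\calL(\varphi_1)\otimes\cdots\otimes\calL(\varphi_N)$ (this is standard, e.g.\ \cite[Theorem 4.29]{Kallenberg:2002}). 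The same argument applies to the primed copies, and by independence of the primed from the unprimed system the two pieces can be combined into the joint statement above.

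Next I would introduce the map $F\colon X^{2N}\to X\times X$ given by
\begin{align*}
  F(x_1,\ldots,x_N,x'_1,\ldots,x'_N)
  :=
  \Bigl(
    x_1 v_0 + \sum_{n=2}^{N} x_n 1_{B_{n-1}}(x_1,\ldots,x_{n-1}),
\\
    x'_1 v_0 + \sum_{n=2}^{N} x'_n 1_{B_{n-1}}(x_1,\ldots,x_{n-1})
  \Bigr).
\end{align*}
The key observation is that $F$ is continuous at every point $(x,x')\in X^{2N}$ whose unprimed coordinates avoid the boundaries, i.e.\ with $(x_1,\ldots,x_{n-1})\notin \partial B_{n-1}$ for all $n\in\{2,\ldots,N\}$; this is because the only source of discontinuity is a jump of an indicator, and the indicators depend only on the unprimed coordinates. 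Hence the set $\calD_F$ of discontinuities of $F$ satisfies
\[
  \calD_F \subseteq \bigcup_{n=2}^{N}
    \{(x,x')\in X^{2N}\colon (x_1,\ldots,x_{n-1})\in \partial B_{n-1}\}.
\]

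Finally I would check that $\calD_F$ has limit-measure zero: by the hypothesis $\P((\varphi_j)_{j=1}^{n-1}\in \partial B_{n-1})=0$ for $n\in\{2,\ldots,N\}$, and a union bound gives
\[
  \P\bigl((\varphi_1,\ldots,\varphi_N,\varphi'_1,\ldots,\varphi'_N)\in \calD_F\bigr)=0.
\]
An application of the continuous mapping theorem (see e.g.\ \cite[Theorem 4.27]{Kallenberg:2002}) to the converging sequence and the map $F$ then yields the claimed weak convergence in $X\times X$. The main obstacle is simply being careful about the dependency structure: the indicators are functions of the unprimed coordinates only, which is why the one map $F$ acting on all $2N$ coordinates encodes both summands simultaneously and why only the boundaries $\partial B_{n-1}$ (and not, say, analogous boundaries for the primed variables) enter the discontinuity set.
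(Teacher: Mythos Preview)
Your argument is correct and is a clean alternative to the paper's proof. The paper proceeds via Skorokhod's representation theorem \cite[Theorem 4.30]{Kallenberg:2002}: it realizes each sequence $(\varphi_{n,k})_k$ on an auxiliary space so that convergence becomes almost sure, takes the product of these spaces to preserve independence, and then argues directly that the indicators $1_{\{(\psi_{j,k})_{j=1}^{n}\in B_n\}}$ converge almost surely to $1_{\{(\psi_j)_{j=1}^{n}\in B_n\}}$ at every $\omega$ for which $(\psi_j(\omega))_{j=1}^{n}\notin\partial B_n$; almost-sure convergence of the two sums then follows. Your route packages this into a single invocation of the continuous mapping theorem: you first lift the coordinate-wise weak convergences to joint weak convergence in $X^{2N}$ via independence, and then push through the map $F$, whose discontinuity set is null under the limit law by the boundary hypothesis. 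The two arguments are morally the same (the continuous mapping theorem is typically proved via Skorokhod representation), but yours is more economical and makes the role of the boundary assumption more transparent. The paper's explicit Skorokhod construction, on the other hand, has the minor advantage of exhibiting the almost-sure convergence directly, which can be convenient if one later wants to combine with uniform integrability as in Lemma~\ref{lem:PhiConv}.
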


\begin{proof}
By \cite[Theorem 4.30]{Kallenberg:2002} we find probability spaces $(M^{(n)},\Sigma^{(n)},\Q^{(n)})$ and
$(M'^{(n)},\Sigma'^{(n)},\Q'^{(n)})$, and random variables 
\[ \psi_{n,k}, \psi_n  : M^{(n)}  \to X
   \sptext{1}{and}{1}
   \psi_{n,k}',\psi_n' : M'^{(n)} \to X, \]
such that
\begin{enumerate}
\item $\psi_{n,k} \stackrel{d}{=} \varphi_{n,k}$, \, $\psi_n \stackrel{d}{=} \varphi_n$, \, 
      and \, $\lim\limits_{k\to\infty} \psi_{n,k}  = \psi_n $ $\Q^{(n)}$-a.s.
\item $\psi_{n,k}' \stackrel{d}{=} \varphi_{n,k}'$, \, $\psi_n' \stackrel{d}{=} \varphi_n'$, \,
      and \,
      $\lim\limits_{k\to\infty} \psi_{n,k}' = \psi_n'$ $\Q'^{(n)}$-a.s.
\end{enumerate}
We let 
$(M,\Sigma,\Q):= 
\Big [ \otimes_{n=1}^N  (M^{(n) },\Sigma^ {(n)},\Q^ {(n)}) \Big ]\otimes 
\Big [ \otimes_{n=1}^N  (M'^{(n)},\Sigma'^{(n)},\Q'^{(n)}) \Big ]$
and extend $\psi_{n,k}, \psi_n,\psi_{n,k}',\psi_n'$ to $M$. 
Note that for all $n\in \{ 1,\ldots,N-1\}$ it holds that 
$\Q( (\psi_j)_{j=1}^n \in \partial B_{n} ) =0$ 
whence an application of the Portmanteau Theorem (see e.g.~\cite[Theorem 4.25]{Kallenberg:2002}) 
to $B_n^0$ and $\overline{B_n}$, the interior and closure of $B_n$, implies that
\begin{equation*}
 \lim_{k\rightarrow \infty} 1_{ \{ (\psi_{j,k} )_{j=1}^{n} \in B_n \}} 
 =
 1_{ \{ ( \psi_{j} )_{j=1}^{n} \in B_n \} }
 \quad \Q\text{-a.s.}
\end{equation*}
Therefore,
\[
 \lim_{k\to\infty} \left[ \psi_{1,k} v_0 + \sum_{n=2}^{N} \psi_{n,k} 1_{ \{ ( \psi_{j,k} )_{j=1}^{n-1} \in B_{n-1} \}} \right ] 
 =
 \psi_{1}  v_0 + \sum_{n=2}^{N} \psi_{n} 1_{ \{ ( \psi_{j} )_{j=1}^{n-1} \in B_{n-1} \}}
 \quad \Q\mbox{-a.s.}
\]
and
\[
 \lim_{k\to\infty} \left [ \psi'_{1,k} v_0 + \sum_{n=2}^{N} \psi'_{n,k} 1_{ \{ ( \psi_{j,k} )_{j=1}^{n-1} \in B_{n-1} \}} \right ]
 =
 \psi'_{1}  v_0 + \sum_{n=2}^{N} \psi'_n 1_{ \{ ( \psi_{j} )_{j=1}^{n-1} \in B_{n-1} \}}\quad \Q\mbox{-a.s.},
\]
which completes the proof.
\end{proof}

\begin{proof}[Proof of Proposition \ref{prop:extend}]

\textbf{(a)}
We verify that if \eqref{item:v_func_of_psi} holds, then \eqref{item:v_func_of_psi} remains
valid with $\calP$ replaced by $\calP_{p\textnormal{-ext}}$.
\medskip

\textbf{(a.0)}
First we consider $(\varphi_n)_{n=1}^N$
such that for all $n\in \{1,\ldots,N\}$ there exist a 
$ K_n\in \N$ and $(\mu_{n,k})_{k=1}^{K_n} \subseteq \calP$
with
$
\calL(\varphi_n)
=
  \mu_{n,1} 
  *
  \cdots
  *
  \mu_{n,K_n} 
$.
As $A_{n}\in \calF_{n}^{\varphi}$ for $n\in \{0,\ldots,N-1\}$ there are 
$B_n\in X^{n}$ such that 
$ A_n = \{(\varphi_j)_{j=1}^{n} \in B_n\}$ for $n\ge 1$,
whereas for $n=0$ we have $A_0\in \{\emptyset,\Omega\}$.
Now let $((\psi_{n,k})_{k=1}^{K_n})_{n=1}^N$
be independent random variables satisfying 
$\calL(\psi_{n,k})=\mu_{n,k}$.
Then it holds in distribution that
\[     \varphi_1 1_{A_0} +
   \sum_{n=2}^{N} \varphi_n 1_{ \{ (\varphi_j)_{j=1}^{n-1} \in B_{n-1} \} } 
   \stackrel{d}{=}              \sum_{k=1}^{K_1} \psi_{1,k} 1_{A_0}
   + \sum_{n=2}^{N}  \sum_{k=1}^{K_n} \psi_{n,k}  1_{ \{ (\sum_{\ell=1}^{K_{j}} \psi_{j,\ell})_{j=1}^{n-1} \in B_{n-1} \} }
 \]
 and
\[ \varphi_1' 1_{A_0} + \sum_{n=2}^{N} \varphi_n' 1_{ \{ (\varphi_j)_{j=1}^{n-1} \in B_{n-1} \} }
   \stackrel{d}{=} 
   \sum_{k=1}^{K_1} \psi_{1,k}' 1_{A_0}
   + 
   \sum_{n=2}^{N} \sum_{k=1}^{K_n} \psi_{n,k}' 1_{ \{ (\sum_{\ell=1}^{K_{j}} \psi_{j,\ell})_{j=1}^{n-1} \in B_{n-1} \} }
 \]
where $((\psi_{n,k}')_{k=1}^{K_n})_{n=1}^N$
is an independent copy of $((\psi_{n,k})_{k=1}^{K_n})_{n=1}^N$.
Hence if \eqref{item:v_func_of_psi} holds, then \eqref{item:v_func_of_psi} remains
valid with $\calP$ replaced by the set of finite convolutions of elements from $\calP$.
\medskip

\textbf{(a.1)} Assume that $\calL(\varphi_n)\in \calP_{p\textnormal{-ext}}$ for $n\in \{1,\ldots,N\}$. 
By Lemmas \ref{lemma:Bcont_approx} and \ref{lem:PhiConv}
we can restrict the $A_n$ to $ A_n = \{(\varphi_j)_{j=1}^{n} \in B_n\}$ with
$\P((\varphi_j)_{j=1}^{n} \in \partial B_n)=0$ and $B_n \in \calB(X^n)$ for $n\in {1,\dots,N-1}$,
whereas we keep $A_0\in \{\emptyset,\Omega\}$.
We find a  uniformly $\calL_p$-integrable family of independent random variables $(\varphi_{n,k})_{n=1,k=1}^{N,\infty}$ such 
that $\calL(\varphi_{n,k})$ is a finite convolution of measures from $\calP$
and $\calL(\varphi_{n,k})\stackrel{w^*}{\to} \calL(\varphi_n)$ as $k\to \infty$.
Lemma \ref{lemma:weak_approx} gives
\begin{multline*}
       \bigg ( \varphi_{1,k}  1_{A_0} + \sum_{n=2}^{N} \varphi_{n,k}  1_{ \{ ( \varphi_{j,k} )_{j=1}^{n-1} \in B_{n-1} \}} ,
               \varphi_{1,k}' 1_{A_0} + \sum_{n=2}^{N} \varphi_{n,k}' 1_{ \{ ( \varphi_{j,k} )_{j=1}^{n-1} \in B_{n-1} \}} \bigg )
     \\
  \stackrel{w^*}{\longrightarrow}
        \bigg ( \varphi_{1}  1_{A_0} + \sum_{n=2}^{N}  \varphi_{n}  1_{ \{ ( \varphi_{j} )_{j=1}^{n-1} \in B_{n-1} \}},
                \varphi_{1}' 1_{A_0}+ \sum_{n=2}^{N}  \varphi_{n}' 1_{ \{ ( \varphi_{j} )_{j=1}^{n-1} \in B_{n-1} \}} \bigg )
\end{multline*}
as $k\to \infty$. By Lemma \ref{lem:PhiConv} we conclude step (a), i.e. \eqref{item:v_func_of_psi} is valid 
for $\calP_{p\textnormal{-ext}}$.
\medskip

\textbf{(b)}
We now prove that if \eqref{item:v_func_of_psi} 
holds with $\calP$ replaced by $\calP_{p\textnormal{-ext}}$,
then \eqref{item:v_adapted} holds.
\smallskip

\textbf{(b.0)}
Let $(\Omega,\calF,\P,(\calF_n)_{n=0}^N)$ and $(\varphi_n)_{n=1}^N$  be as in \eqref{item:v_adapted}, and set
$\mu_n := \law(\varphi_n)$. If each $\mu_n$ is a Dirac measure in an $x_n\in X$,
then $\E\Phi(\sum_{n=1}^N\varphi_n 1_{A_{n-1}},\sum_{n=1}^N\varphi_n' 1_{A_{n-1}})$ is a weighted sum of 
terms  $\Phi(\sum_{n\in I} x_n,\sum_{n\in I} x_n)$ with $I\subseteq \{1,\ldots,n\}$ (the empty sum is treated as zero).
In this case \eqref{item:v_func_of_psi} implies that each of these terms in non-positive, so that
in what follows we assume there exists an $\ell \in \{1,\ldots,N\}$ such that $\mu_{\ell}$
is \emph{not} a Dirac measure.
We will prove that for all $\eps>0$ it holds 
that 
\begin{equation}\label{eq:extend_eps}
  \E
  \Phi\bigg(
      \sum_{n=1}^{N}
	\varphi_n 1_{A_{n-1}}
    ,
      \sum_{n=1}^{N}
	\varphi'_n 1_{A_{n-1}}
  \bigg)
  <
  3\eps,
\end{equation}
which completes the proof of \eqref{item:v_adapted}.
By passing to the larger probability space
$(\Omega,\calF,\P)\otimes ([0,1],\calB([0,1]),\lambda)$
(where $\lambda$ is the Lebesgue measure), endowed with
the filtration $(\calF_n \otimes \calB([0,1]))_{n=0}^N$,
we may assume that $(\Omega,\calF_n,\P)$ is divisible for all
$n\in \{0,\dots,N\}$.
Fix $\varepsilon>0$, and let $P_{\eps}$ be 
as in Lemma~\ref{lemma:approximation} with the property that 
$\mu_{\ell}\circ P_{\eps}^{-1} $ is not a Dirac measure.
Recall
from Lemma~\ref{lemma:approximation} that 
\begin{equation}\label{eq:Phiapprox1}
  \E\left| 
  \Phi\bigg(
      \sum_{n=1}^{N}
	\varphi_n 1_{A_{n-1}}
    ,
      \sum_{n=1}^{N}
	\varphi'_n 1_{A_{n-1}}
  \bigg)
  - 
  \Phi\bigg(
      \sum_{n=1}^{N}
	P_{\eps}(\varphi_n) 1_{A_{n-1}}
    ,
      \sum_{n=1}^{N}
	P_{\eps}(\varphi'_n) 1_{A_{n-1}}
  \bigg)  
  \right|  
  <
  \eps.
\end{equation}

\textbf{(b.1)}
\smallskip
Set $\mu_{\ell,\eps} := \mu_{\ell}\circ P_{\eps}^{-1}$ 
and recall that $\mu_{\ell,\eps}$
is not a single Dirac measure, but a finite sum of  Dirac measures.
For $m\in \N$ we proceed as follows:
\medskip

\textbf{Step 0.} 
We apply Lemma~\ref{lem:dyadic_stefan} 
with $\calG = \{\emptyset,\Omega\}$ 
and $\calF = \calF_0$ to find a
$k_{m,1} \in \{2,3,\ldots\}$ and a sequence
of independent, $\calF_0$-measurable, 
$\mu_{\ell,\eps}$-distributed random variables
$(\psi_{m,1},...,\psi_{m,k_{m,1}-1})$, 
and to find an
$
A_{m,0} \in
  \sigma(\psi_{m,1},...,\psi_{m,k_{m,1}-1})$
with
\[ 
  \| 1_{A_0} - 1_{A_{m,0}} \|_{L^p(\P)}
  \leq  
  2^{-m}. 
\]
Set $\psi_{m,k_{m,1}}:= P_\eps(\varphi_1)$.

\smallskip

\textbf{Step $n;\,n =1,\ldots,N-1$.}       
Apply Lemma~\ref{lem:dyadic_stefan}
with $\calG=\sigma(\psi_{m,1},\ldots, \psi_{m,k_{m,n}})$ 
(note that $\psi_{m,k_{m,1}}=P_\eps(\varphi_1)$,\ldots, $\psi_{m,k_{m,n}}=P_\eps(\varphi_n)$)
and  $\calF=\calF_{n+1}$, to find a $k_{m,n+1} \in \{k_{m,n}+2,k_{m,n}+3,\ldots\}$
and independent, $\calF_{n+1}$-measurable, 
$\mu_{\ell,\eps}$-distributed random variables
$(\psi_{m,k_{m,n}+1},...,\psi_{m,k_{m,n+1}-1})$, 
independent
of the $\sigma$-al\-gebra $\sigma(\psi_{m,1},\ldots, \psi_{m,k_{m,n}})$ as well,
and to find an
$ 
  A_{m,n}\in \sigma(\psi_{m,1},\ldots,\psi_{m,k_{m,n+1}-1})
$
with 
\[
   \| 1_{A_n} - 1_{A_{m,n}} \|_{\calL^p(\P)}
  \leq
    2^{-m}. 
\]
Set $\psi_{m,k_{m,n+1}}:= P_\eps(\varphi_{n+1})$.
\medskip

\textbf{(b.2)}
By construction, 
\begin{multline*}
  \lim_{m\rightarrow \infty}
  \left\|
  \sum_{n=1}^{N} (1_{A_{m,n-1}} - 1_{A_{n-1}}) P_{\eps}(\varphi_n) 
  \right\|_{\calL^p(\P;X)} \\
  = 
\lim_{m\rightarrow \infty}
  \left\|
  \sum_{n=1}^{N} (1_{A_{m,n-1}} - 1_{A_{n-1}}) P_{\eps}(\varphi_n') 
  \right\|_{\calL^p(\P;X)}
  = 0,
\end{multline*}
 hence by Lemma~\ref{lem:PhiConv} there exists an $M \in \N$
such that, for $B_{n-1}=A_{M,n-1}$,
\begin{multline}\label{eq:Phiapprox2b}
 \bigg |
  \E \Phi\left(
   \sum_{n=1}^{N} P_{\eps}(\varphi_n ) 1_{B_{n-1}} ,
   \sum_{n=1}^{N} P_{\eps}(\varphi_n') 1_{B_{n-1}}
  \right) \\
  -
  \E\Phi\left(
   \sum_{n=1}^{N} P_{\eps}(\varphi_n ) 1_{A_{n-1}},
   \sum_{n=1}^{N} P_{\eps}(\varphi_n') 1_{A_{n-1}}
  \right)
 \bigg |
 <
 \eps.
\end{multline}

\textbf{(b.3)} 
We would like to apply \eqref{item:v_func_of_psi} to 
$ \sum_{n=1}^{N} 1_{B_{n-1}} \varphi_n $, however,
our construction of $B_{n-1}$ only guarantees that 
$P_{\eps}(\varphi_n)$ is independent of $B_{n-1}$,
\emph{not} that $\varphi_n$ is independent of $B_{n-1}$.
Hence we proceed as follows.
Let $k_n:=k_{M,n}$ for $n\in \{1,\ldots,N\}$ and $(\tilde{\psi}_k)_{k=1}^{k_{N}}$
be a sequence of independent random 
variables such that 
$ \tilde{\psi}_{k_{n}}$
is $\mu_n$-distributed for all $n\in \{1,\ldots,N\}$,
and $ \tilde{\psi}_{k}$ is $\mu_{\ell}$-distributed for 
all $k\in \{1,\ldots,k_N\} \setminus \{k_1,\dots,k_N\}$.
By the Factorization Lemma for all $n\in \{1,\ldots,N\}$ there exists
a $ C_{n-1} \in X^{k_{n}-1}$ such 
that
\[  B_{n-1}
     = 
    \Big\{
      (
	\psi_{1},
	\ldots, 
	\psi_{k_{n}-1}
      )
      \in C_{n-1}
    \Big\}.
\]
Define, for $n\in \{1,\ldots,N\}$, 
$
  \tilde{B}_{n-1} 
  =
  \{(P_{\eps}(\tilde{\psi}_k))_{k=1}^{k_{n}-1}\in C_{n-1}\}.
$
By construction,  
\begin{equation*}
 \left( 
  \sum_{n=1}^{N} 1_{B_{n-1}} P_{\eps}(\varphi_n)
 , 
  \sum_{n=1}^{N} 1_{B_{n-1}} P_{\eps}(\varphi_n')
 \right)
\stackrel{d}{=}  
 \left(
  \sum_{n=1}^{N} 1_{\tilde{B}_{n-1}} P_{\eps}(\tilde{\psi}_{k_n})
 ,
  \sum_{n=1}^{N} 1_{\tilde{B}_{n-1}} P_{\eps}(\tilde{\psi}_{k_n}')
 \right),
\end{equation*}
where $(\tilde{\psi}_k')_{k=1}^{k_{N}}$ is an independent copy of 
$(\tilde{\psi}_k)_{k=1}^{k_{N}}$, so that
\begin{multline}\label{eq:Phiapprox3}
 \E  
 \Phi\left(
  \sum_{n=1}^{N} P_{\eps}(\varphi_n ) 1_{B_{n-1}},
  \sum_{n=1}^{N} P_{\eps}(\varphi_n') 1_{B_{n-1}} 
 \right) \\
 =
 \E
 \Phi\left(
  \sum_{n=1}^{N}  P_{\eps}(\tilde{\psi}_{k_n}) 1_{\tilde{B}_{n-1}}
  ,
  \sum_{n=1}^{N} P_{\eps}(\tilde{\psi}_{k_n}') 1_{\tilde{B}_{n-1}} 
 \right).
\end{multline}
By the assumption that Proposition~\ref{prop:extend}~\eqref{item:v_func_of_psi} holds for 
$\calL(\varphi_n)\in \calP_{p\textnormal{-ext}}$
it follows that
\begin{equation*}
 \E
 \Phi\left(
  \sum_{n=1}^{N} \tilde{\psi}_{k_n}  1_{\tilde{B}_{n-1}}
  ,
  \sum_{n=1}^{N} \tilde{\psi}_{k_n}' 1_{\tilde{B}_{n-1}}
 \right)
 \leq 
 0.
\end{equation*}
The inequality above in combination with Lemma~\ref{lemma:approximation} implies that  
\begin{equation*}
 \E  
 \Phi\left(
  \sum_{n=1}^{N}  P_{\eps}(\tilde{\psi}_{k_n})  1_{\tilde{B}_{n-1}},
  \sum_{n=1}^{N}  P_{\eps}(\tilde{\psi}_{k_n}') 1_{\tilde{B}_{n-1}}
 \right)
 <
 \eps.
\end{equation*}
Combining the inequality above with~\eqref{eq:Phiapprox3},~\eqref{eq:Phiapprox2b}, and~\eqref{eq:Phiapprox1}
we arrive at the desired estimate~\eqref{eq:extend_eps}.
\end{proof}


\subsection{From simple decoupling to general decoupling}
\label{subsec:from_simple_to_Lp}

In addition to the space $\adapt_p(\Omega,\F;X,\calP)$
introduced in Definition~\ref{definition:class_of_adapted_processes}
we shall need the following one:

\begin{definition}
\label{definition:class_of_adapted_processes_2}
Let $X$ be a separable Banach space, $p\in (0,\infty)$, $\emptyset \not = \calP \subseteq \calP_p(X)$, and
let $(\Omega,\calF,\P,(\calF_n)_{n=0}^N)$, $N\in \N$, be a stochastic basis. We shall denote by
$\adapt_{p\textnormal{-simple}}(\Omega,(\calF_n)_{n=0}^N;X,\calP)$ the set of $(\calF_n)_{n=1}^N$-adapted sequences  
$(d_n)_{n=1}^N$ such that for all $n\in \{1,\ldots,N\}$ there exist 
$K_n\in \N$, 
a partition $(A_{n-1,k})_{k=1}^{K_n}\subseteq \calF_{n-1}$ 
of $\Omega$ consisting of sets of positive measure,
and $\mu_{n,1},\ldots,\mu_{n,K_n}\in\calP$
such that $\sum_{k=1}^{K_n} 1_{A_{n-1,k}} \mu_{n,k}$ 
is a regular version of $\P(d_n\in\cdot \,|\, \calF_{n-1})$.
\end{definition}

\begin{proposition}\label{prop:simple_to_Lp}
Let $X$ be a separable Banach space,
let $\Phi \in C(X \times X;\R)$
be such that there exist 
constants $C,p\in (0,\infty)$ for which it holds that
\begin{equation}\label{eq:property_Phi2}
  | \Phi(x,y) |
  \leq
  C( 1 + \| x \|_X^p + \| y \|_X^p)
\end{equation}
for all $ (x,y) \in X\times X$, and
let $\calP \subseteq \mathcal{P}_p(X)$ with $\delta_0 \in \calP$.
Then the following assertions are equivalent:
\begin{enumerate}
\item \label{item:general_decoupling_2}
      For every stochastic basis $(\Omega,\calF,\P,\F)$ with $\F=(\calF_n)_{n\in \N_0}$
      and every finitely supported 
      $(d_n)_{n\in \N} \in \adapt_p(\Omega,\F;X,\calP)$
      it holds that
      \[     \E \Phi\left (\sum_{n=1}^\infty d_n,\sum_{n=1}^\infty e_n \right ) 
         \le 0, \] 
      whenever $(e_n)_{n\in \N}$ is an $\F$-decoupled tangent sequence 
      of $(d_n)_{n\in \N}$.
\item \label{item:general_decoupling_simple}
      For every $N\in \N$, every stochastic basis $(\Omega,\calF,\P,(\calF_n)_{n=0}^N)$ and 
      $(d_n)_{n=1}^N \in \adapt_{p\textnormal{-simple}}
      (\Omega,(\calF_n)_{n=0}^N;X,\calP)$ it holds that
      \[ \E \Phi\left (\sum_{n=1}^N d_n,\sum_{n=1}^N e_n \right ) \le 0, \] 
      whenever $(e_n)_{n=1}^N$ is an $(\calF_n)_{n=0}^N$-decoupled 
      tangent sequence of $(d_n)_{n=1}^N$.
\item \label{item:simple_decoupling_2}
      For every $N\in \N$, every stochastic basis $(\Omega,\calF,\P,(\calF_n)_{n=0}^N)$,
      every sequence of independent random variables $(\varphi_n)_{n=1}^N$ with 
      $\calL(\varphi_n) \in \calP$ such that $\varphi_n$ is $\calF_{n}$-measurable
      and independent of $\calF_{n-1}$, and for all $A_n \in \calF_{n}$, $n \in \{0,\ldots,N-1\}$,
      it holds that
      \begin{equation*}
      \E
      \Phi\bigg(
      \sum_{n=1}^{N}
	\varphi_n 1_{A_{n-1}}
    ,
      \sum_{n=1}^{N}
	\varphi'_n 1_{A_{n-1}}
      \bigg)	\leq 0,
      \end{equation*}
      whenever $(\varphi_n')_{n=1}^N$ is a copy of $(\varphi_n)_{n=1}^N$
      independent of $\calF_N$.
\end{enumerate}
\end{proposition}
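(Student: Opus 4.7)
The implications (i)~$\Rightarrow$~(ii) and (ii)~$\Rightarrow$~(iii) are relatively direct. For (i)~$\Rightarrow$~(ii), note that a $p$-simple process has $\kappa_{n-1}[\omega,\cdot] = \mu_{n,k(\omega)} \in \calP$ almost surely on $A_{n-1,k(\omega)}$, so $\adapt_{p\textnormal{-simple}}(\Omega,\F;X,\calP) \subseteq \adapt_p(\Omega,\F;X,\calP)$. For (ii)~$\Rightarrow$~(iii), given independent $(\varphi_n)$ with $\calL(\varphi_n) \in \calP$, sets $A_n \in \calF_n$, and an independent copy $(\varphi_n')$ as in the statement, I set $d_n := 1_{A_{n-1}}\varphi_n$ and $e_n := 1_{A_{n-1}}\varphi_n'$ on the enlarged filtration $\calG_n := \calF_n \vee \sigma(\varphi_1',\ldots,\varphi_n')$. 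Using both the independence of $\varphi_n$ from $\calF_{n-1}$ and of $(\varphi_n')$ from $\calF_N$, one has that $\varphi_n$ is independent of $\calG_{n-1}$, so the conditional law of $d_n$ given $\calG_{n-1}$ equals $1_{A_{n-1}}\calL(\varphi_n) + 1_{A_{n-1}^c}\delta_0$, a two-piece element of $\adapt_{p\textnormal{-simple}}$ thanks to $\delta_0 \in \calP$. A direct verification shows $(e_n)$ is a $\calG$-decoupled tangent sequence of $(d_n)$, so (ii) yields (iii).

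The key step is (iii)~$\Rightarrow$~(ii), and it is here that I use Corollary~\ref{cor:factor_through_subsigmaalgebra}. Given $(d_n)_{n=1}^N \in \adapt_{p\textnormal{-simple}}(\Omega,\F;X,\calP)$ with partitions $(A_{n-1,k})_{k=1}^{K_n}$ and measures $(\mu_{n,k})_{k=1}^{K_n}$, I enlarge the probability space by multiplying with $((0,1])^N$ (Lebesgue) and apply Corollary~\ref{cor:factor_through_subsigmaalgebra} successively to produce uniform $H_1,\ldots,H_N$ with $H_n$ independent of $\calF_{n-1}$, together with factorisations $d_n = d_n^0(\omega,H_n)$. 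Inspecting the construction in the proof of Theorem~\ref{thm:factor} and using a Borel isomorphism $X \cong [0,1)$, I may choose $d_n^0(\omega,h) = F_{\mu_{n,k}}^{-1}(h)$ on $A_{n-1,k}$, where $F_\mu^{-1}\colon [0,1] \to X$ is a quantile map with $F_\mu^{-1}(U)\sim\mu$ for a uniform $U$. I further enlarge to carry mutually independent fresh uniforms $H_n^k$ and set
\[
  H_{n,k} := 1_{A_{n-1,k}}H_n + 1_{A_{n-1,k}^c}H_n^k,
  \qquad
  \varphi_{n,k}^\dagger := F_{\mu_{n,k}}^{-1}(H_{n,k}).
\]
A short calculation, splitting according to the partition $(A_{n-1,k''})_{k''}$, shows that the family $(H_{n,k})_{n,k}$ is jointly uniform and independent, and jointly independent of $\calF_{n-1}$; hence the $\varphi_{n,k}^\dagger$ are mutually independent with $\calL(\varphi_{n,k}^\dagger) = \mu_{n,k} \in \calP$. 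On $A_{n-1,k}$ one has $H_{n,k} = H_n$, so $\varphi_{n,k}^\dagger = F_{\mu_{n,k}}^{-1}(H_n) = d_n$, yielding the pointwise identity $d_n = \sum_{k=1}^{K_n} 1_{A_{n-1,k}}\varphi_{n,k}^\dagger$. I now re-index lexicographically by $m = (n,k)$, set $\tilde\varphi_m := \varphi_{n,k}^\dagger$, $\tilde A_{m-1} := A_{n-1,k}$, and pick a filtration $(\tilde\calF_m)$ with $\tilde A_{m-1} \in \tilde\calF_{m-1}$ and $\tilde\varphi_m$ independent of $\tilde\calF_{m-1}$ (possible by revealing only $(\calF_{n(m)-1},H_n^j,\varphi_{n,j}^\dagger)_{j\leq k(m)}$ at time $m$). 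Applying (iii) with the independent copy $(\varphi_{n,k}^{\dagger\prime})$ furnishes
\[
  \E\Phi\Bigl(\sum_n d_n,\sum_n \hat e_n\Bigr) \le 0,
  \qquad \hat e_n := \sum_k 1_{A_{n-1,k}}\varphi_{n,k}^{\dagger\prime}.
\]
A direct check shows $(\hat e_n)$ is an $\F$-decoupled tangent sequence of $(d_n)$ on the enlarged space; by uniqueness in distribution of decoupled tangent sequences, $(d_n,\hat e_n) \stackrel{d}{=} (d_n,e_n)$, and (ii) follows.

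For (ii)~$\Rightarrow$~(i) I use an approximation argument. Given finitely supported $(d_n) \in \adapt_p(\Omega,\F;X,\calP)$ with decoupled tangent $(e_n)$, I exploit separability of $(\calP_p(X),d_p)$ from Section~\ref{sec:reduction_decoupling} to cover $\calP$ by countably many $d_p$-balls of radius $1/m$ and pick representatives in $\calP$. For each $m$ this produces a $\calF_{n-1}$-measurable kernel $\kappa_{n-1}^{(m)}$ with values in a finite subset of $\calP$ and $d_p(\kappa_{n-1}^{(m)}[\omega,\cdot],\kappa_{n-1}[\omega,\cdot]) \le 1/m$. Using the quantile representation above, I set $d_n^{(m)} := F_{\kappa_{n-1}^{(m)}}^{-1}(H_n)$, which lies in $\adapt_{p\textnormal{-simple}}$; the decoupled tangent $e_n^{(m)}$ is constructed analogously using an independent uniform copy. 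The $d_p$-control together with Lemma~\ref{lem:conv_p_moments_vs_LpUI} yields $d_n^{(m)} \to d_n$ and $e_n^{(m)} \to e_n$ in $\calL^p$; applying (ii) to each $(d_n^{(m)},e_n^{(m)})$ and passing to the limit via Lemma~\ref{lem:PhiConv} yields (i).

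The main obstacle is the construction of the independent family $(\varphi_{n,k}^\dagger)$ in (iii)~$\Rightarrow$~(ii): the naive choices either preserve $d_n$ pointwise but fail independence across $k$ (namely $\varphi_{n,k}^\dagger = F_{\mu_{n,k}}^{-1}(H_n)$, all determined by the single uniform $H_n$), or are independent but alter the joint law of $(d_n)$ (namely fresh $\tilde\varphi_{n,k}$ independent of $\Omega$, whose sum would make $d_n$ independent of $\calF_{n-1}$-events it originally correlates with). The mixing $H_{n,k} = 1_{A_{n-1,k}}H_n + 1_{A_{n-1,k}^c}H_n^k$ is the precise device reconciling both requirements, and the subsequent appeal to uniqueness in distribution of decoupled tangent sequences is essential in order to transfer the inequality from the constructed $(\hat e_n)$ back to the given $(e_n)$.
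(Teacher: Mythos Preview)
Your implications (i)$\Rightarrow$(ii) and (ii)$\Rightarrow$(iii) are correct, and your (iii)$\Rightarrow$(ii) is essentially the paper's construction (Lemmas~\ref{lem:aux_decoupling} and~\ref{lem:A_sim_to_disjoint}) rephrased through uniforms: your $\varphi_{n,k}^\dagger$ equals $d_n 1_{A_{n-1,k}} + F_{\mu_{n,k}}^{-1}(H_n^k)1_{A_{n-1,k}^c}$, which is precisely the paper's $d_{n,k} = d_n 1_{A_{n-1,k}} + d'_{n,k}1_{A_{n-1,k}^c}$. The detour through Corollary~\ref{cor:factor_through_subsigmaalgebra} is unnecessary at this stage (one may simply postulate independent $d'_{n,k}\sim\mu_{n,k}$ on a product extension), but it does no harm.

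The genuine gap is in (ii)$\Rightarrow$(i). You assert that $d_p(\kappa_{n-1}^{(m)}[\omega,\cdot],\kappa_{n-1}[\omega,\cdot])\le 1/m$ together with Lemma~\ref{lem:conv_p_moments_vs_LpUI} yields $d_n^{(m)} = F_{\kappa_{n-1}^{(m)}}^{-1}(H_n) \to F_{\kappa_{n-1}}^{-1}(H_n) = d_n$ in $\calL^p$. But Lemma~\ref{lem:conv_p_moments_vs_LpUI} only relates weak convergence plus moment convergence to uniform $\calL^p$-integrability; it says nothing about $\calL^p$-convergence of a \emph{specific} coupling. The quantile map $F_\mu^{-1}$ built via a Borel isomorphism $X\cong[0,1)$ is in general wildly discontinuous in $\mu$: already for $X=\R^2$ one can take two-point measures $\nu_m\to\mu$ in $d_p$ whose images under the isomorphism swap their order in $[0,1)$, so that $\int_0^1\|F_{\nu_m}^{-1}(h)-F_\mu^{-1}(h)\|_X^p\,dh$ stays bounded away from zero. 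Hence the claimed $\calL^p$-convergence fails as written. The paper's route for (ii)$\Rightarrow$(i) bypasses this entirely: it uses Corollary~\ref{cor:factor_through_subsigmaalgebra} to write $d_n = d_n^0(\omega,H_n)$ and then approximates $d_n^0$ \emph{directly} in $\calL^p(\Omega\times[0,1];X)$ by simple functions $\sum_k 1_{F_{n-1,k}}(\omega)f_{n,k}(h)$, choosing each $f_{n,k}$ to be $d_n^0(\omega_k,\cdot)$ for a well-chosen $\omega_k$ so that $\calL(f_{n,k})\in\calP$. This produces $\calL^p$-convergence of both the approximants and their decoupled counterparts by construction, with no continuity of quantile maps required. (A secondary point: your countable $d_p$-cover yields a kernel with at most countable, not finite, range, so $d_n^{(m)}$ need not lie in $\adapt_{p\textnormal{-simple}}$ without an additional truncation step.)
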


We shall use the following lemmas to prove the proposition above.

\begin{lemma}
\label{lem:aux_decoupling}
Let $(\Omega,\calF,\P)$ be a probability space,
let $K\in \N$ and let $A_k \in \calF $, $k\in \{1,\ldots,K\}$,
be such that
$(A_k)_{k=1}^{K}$ is a partition of $\Omega$ and $\P(A_k)>0$ for 
all $k\in \{1,\ldots,K\}$. 
Let $X$ be a separable Banach space and let $d\colon \Omega \rightarrow X$
be a random variable. Let $\kappa \colon \Omega \rightarrow \calP(X)$
be a regular version of $\calP( d \in \cdot \,|\, \sigma((A_k)_{k=1}^{K}))$,
i.e., $\kappa = \sum_{k=1}^{K} \mu_k 1_{A_k}$ for some 
$\mu_1,\ldots,\mu_K\in \calP(X)$. Let $(\Omega',\calF',\P')$ be an
auxiliary probability space and let $(d'_k)_{k=1}^{K}$ be a sequence 
of independent $X$-valued random variables on $(\Omega',\calP',\calF')$
satisfying $ \calL(d_k') = \mu_k$, and let, for all $k\in \{1,\ldots,K\}$,
$d_k \colon (\Omega,\calF,\P) \times (\Omega',\calF',\calP') \rightarrow X$ 
be a random variable defined by
\begin{equation}\label{eq:def_dk}
  d_k(\omega,\omega') 
  = 
  d(\omega) 1_{A_k}(\omega) 
  + d_k'(\omega') 1_{\Omega \setminus A_k}(\omega)
\end{equation}
for all $(\omega,\omega')\in \Omega\times \Omega'$.
Let $\calG\subseteq \calF$ 
be a $\sigma$-algebra such that $\sigma( (A_k)_{k=1}^K ) \subseteq \calG$.
Define $\calG_0 = \calG \otimes \{\emptyset, \Omega'\}$
and for $k\in \{1,\ldots,K\}$ define
\begin{equation}
 \calG_k 
 =
 \sigma( \calG_0, d_1,\ldots,d_k ).
\end{equation}
Then the following holds:
\begin{enumerate} 
 \item For all $(\omega,\omega')\in \Omega\times \Omega'$
 it holds that 
 $ 
  d(\omega) 
  = 
  \sum_{k=1}^{K} 
    d_{k}(\omega,\omega') 1_{A_k}(\omega)
 $. \label{item:dsum}
 \item\label{item:dkfilt} 
  $(d_k)_{k=1}^K$ is $(\calG_k)_{k=1}^{K}$-adapted.
  \item\label{item:ddist}
 $\calL(d_k) = \mu_k$.
 \item\label{item:dkindep} $(\calG_0,\sigma(d_1),\ldots,\sigma(d_K))$ are independent
  if and only if 
 for all $k\in \{1,\ldots, K\}$, all $A\in \calG$ 
 satisfying $A\subseteq A_k$ and $\P(A)>0$, and all $B\in \calB(X)$ it holds that
\begin{equation}\label{eq:indep_cond}
 \P( d \in B \,|\, A ) :=
 \frac{ \P( \{ d\in B \} \cap A )}{ \P(A) }
 = \mu_k(B).
\end{equation}
\end{enumerate}
\end{lemma}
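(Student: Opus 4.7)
The plan is to dispatch (i)--(iii) as direct verifications and concentrate on (iv). For (i), on $\omega \in A_k$ the definition \eqref{eq:def_dk} reads $d_k(\omega,\omega') = d(\omega)$, while $1_{A_j}(\omega)=0$ for $j\neq k$, so the sum collapses to $d(\omega)$. For (ii), the first summand in \eqref{eq:def_dk} is $\calF\otimes\{\emptyset,\Omega'\}$-measurable and the second is $\sigma((A_k)_{k=1}^K)\otimes\calF'$-measurable, whence $d_k$ is $\calF\otimes\calF'$-measurable; and $d_k\in\sigma(d_k)\subseteq \calG_k$ by definition of $\calG_k$. For (iii), splitting $\Omega\times\Omega'$ along $A_k$,
\[
  \{d_k\in B\} = \bigl((\{d\in B\}\cap A_k)\times \Omega'\bigr) \sqcup \bigl((\Omega\setminus A_k)\times\{d_k'\in B\}\bigr),
\]
and since $\mu_k 1_{A_k}$ is a regular version of $\P(d\in\cdot\,|\,\sigma((A_k)_{k=1}^K))$, the first set has $\P\otimes\P'$-measure $\mu_k(B)\P(A_k)$; the second has measure $(1-\P(A_k))\mu_k(B)$; summing gives $\mu_k(B)$.

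For (iv), the key observation is that, for any Borel $B_1,\ldots,B_K\subseteq X$,
\[
  \bigcap_{k=1}^K \{d_k\in B_k\}
  = \bigsqcup_{j=1}^K \bigl( A_j\cap\{d\in B_j\} \bigr) \times \Bigl( \bigcap_{k\neq j} \{d_k'\in B_k\} \Bigr),
\]
because on $A_j$ we have $d_j = d$ and $d_k = d_k'$ for $k\neq j$. Intersecting with $A\times\Omega'$ for $A\in\calG$ and using the independence of $(d_k')_{k=1}^K$ yields the master identity
\[
  \P\!\bigl( (A\times\Omega') \cap \textstyle\bigcap_{k=1}^K \{d_k\in B_k\} \bigr)
  = \sum_{j=1}^K \P(A\cap A_j\cap\{d\in B_j\}) \prod_{k\neq j} \mu_k(B_k).
\]
Since every element of $\calG_0$ is of the form $A\times\Omega'$ with $A\in\calG$ and every element of $\sigma(d_k)$ is of the form $\{d_k\in B_k\}$, independence of $(\calG_0,\sigma(d_1),\ldots,\sigma(d_K))$ is equivalent to this quantity equalling $\P(A)\prod_{k=1}^K\mu_k(B_k) = \sum_j \P(A\cap A_j)\mu_j(B_j)\prod_{k\neq j}\mu_k(B_k)$ for all admissible $A, B_1,\ldots,B_K$.

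The ``if'' direction then follows term by term: $A\cap A_j\in\calG$ is contained in $A_j$, so condition \eqref{eq:indep_cond} applied to $A\cap A_j$ gives $\P(A\cap A_j\cap\{d\in B_j\}) = \mu_j(B_j)\P(A\cap A_j)$ (the case $\P(A\cap A_j)=0$ being trivial). For the ``only if'' direction, fix $k$ and $A\in\calG$ with $A\subseteq A_k$, $\P(A)>0$, and specialize to $B_j = X$ for $j\neq k$ and $B_k = B$; since $A\cap A_j=\emptyset$ for $j\neq k$, only the $j=k$ term survives on the left of the master identity, reducing it to $\P(A\cap\{d\in B\}) = \P(A)\mu_k(B)$, which is exactly \eqref{eq:indep_cond}. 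The only delicate point is the combinatorial bookkeeping in the partition decomposition above; once that display is in place, the rest is a linear-algebraic manipulation against the finite partition $(A_j)_{j=1}^K$.
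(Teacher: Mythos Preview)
Your proof is correct and follows essentially the same approach as the paper: items (i)--(iii) are dispatched directly, and for (iv) both arguments rest on the partition decomposition $\{d_k\in B_k\}\cap(A_j\times\Omega') = (A_j\cap\{d\in B_j\})\times\bigcap_{l\neq j}\{d_l'\in B_l\}$ together with the independence of the $d_k'$. The only cosmetic difference is that the paper restricts to $A\subseteq A_k$ from the outset (so only one term survives) and notes that this suffices, whereas you write the master identity for general $A\in\calG$ and match it term by term.
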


\begin{proof}
Claims~\eqref{item:dsum},~\eqref{item:dkfilt}
and~\eqref{item:ddist} are trivial. 
Regarding claim~\eqref{item:dkindep},
suppose that $d_k$ is independent of $\calG_{k-1}$
for all $k\in \{1,\ldots,K\}$, then in particular
$(d_k)_{k=1}^{K}$ is independent of $\calG_0$. Let $k\in \{1,\ldots,K\}$, 
$A\in \calG$ satisfying $A\subseteq A_k$ and $\P(A)>0$, and $B\in \calB(X)$ be given. Then
\begin{equation*}
\begin{aligned}
 \P( d\in B \,|\, A ) 
& =
 \frac{ \P( \{ d \in B \} \cap A ) }
 { \P(A)}
 =
 \frac{ \P\otimes \P' ( \{ d_k \in B \} \cap (A \times \Omega') )}
 { \P\otimes \P'(A \times \Omega')}
\\ & 
 = 
 \P\otimes \P'( d_k \in B )
 =
 \mu_k(B),
\end{aligned}
\end{equation*}
where we use \eqref{item:dsum},  independence, and \eqref{item:ddist}.
In order to prove the reverse implication, 
let $B_1,...,B_K \in \calB( X )$, let $k\in \{1,\ldots,K\}$, and let $A\in \calG$
be such that $A\subseteq A_k$ and $\P(A)>0$.
It holds that
\begin{equation}
\begin{aligned}
&    \P \otimes \P' ( (A\times \Omega') \cap \{ d_1\in B_1,\ldots,d_K \in B_K \} )
\\
&    =
    \P \otimes \P' 
    \Big[
    ( A \cap \{d \in B_k\} ) 
    \times 
    \Big(
      \bigcap_{\ell \in \{1,\ldots,K\}\setminus \{k\}} \{ d_\ell' \in B_\ell  \}
    \Big)
    \Big]
\\ & 
    = 
    \P (A) \P( d\in B_k \,|\, A )
    \prod_{\ell \in \{1\ldots,K\}\setminus \{k\}} \P'( d_{\ell}' \in B_{\ell} )
    = 
    \P (A)
    \prod_{\ell \in \{1\ldots,K\}} \mu_{\ell}(B_{\ell}).
\end{aligned}
\end{equation}
This suffices to prove the reverse implication.
\end{proof}
\medskip

Let $N\in \N$ and let 
$(d_n)_{n=1}^N\in \adapt_{p\textnormal{-simple}}(\Omega,\P,(\calF_n)_{n=0}^N;X,\calP)$ 
i.e., for all $n\in \{1,\ldots,N\}$ we have a $K_n\in \N$
such that 
\begin{enumerate}
\item $\P( d_n \in \cdot \,|\, \calF_{n-1}) = \sum_{k=1}^{K_n} \mu_{n,k} 1_{A_{n-1,k}}$ 
      a.s., where
\item $A_{n-1,1},\ldots,A_{n-1,K_n} \in \calF_{n-1}$ is a partition of 
      $\Omega$ with $\P( A_{n-1,k} ) >0$ and $\mu_{n,1},\ldots,\mu_{n,K_n} \in \calP$.
\end{enumerate}
We set $K_{0}:=1$ and
\equa 
J_0  &:= & \{ (n,k) : n\in \{0,\ldots,N\}, k\in \{1,\ldots,K_n\}\}, \\
J    &:= & \{ (n,k) : n\in \{1,\ldots,N\}, k\in \{1,\ldots,K_n\}\}. 
\tion
On $J_0$ we introduce the lexicographical order
$(m,j) \prec (n,k)$ if either 
$m<n$ and $j\in \{1,\ldots,K_m\}$ or $m=n$ and $j\in \{1,\ldots,k\}$.
For an auxiliary  probability space $(\Omega',\calF',\P')$ and  independent random variables  
$d'_{n,k} \colon \Omega' \rightarrow X$ with  $\calL(d'_{n,k})= \mu_{n,k}$,
$n\in \{1,\ldots,N\}$, and $k\in \{1,\ldots,K_n\}$ let
\begin{equation*}\label{eq:def_psi}
  d_{n,k}(\omega,\omega')
  := 
  d_n(\omega) 1_{A_{n-1,k}}(\omega)
  +
  d'_{n,k}(\omega')1_{\Omega\setminus A_{n-1,k}}(\omega).
\end{equation*}
We define 
\equa
\calG_{0,1}   &:= & \calF_0 \otimes \{ \emptyset,\Omega'\}, \\
\calG_{n,k}   &:= & \calG_{n-1,K_{n-1}} \vee \sigma (d_{n,1},\ldots,d_{n,k}) \quad
                    (n\in \{1,\ldots,N\}, k\in\{1,\ldots,K_n-1\}), \\
\calG_{n,K_n} &:= & \calG_{n-1,K_{n-1}} \vee \sigma (d_{n, 1},\ldots,d_{n,K_n}) 
                                        \vee \bigg ( \calF_n \otimes \{ \emptyset,\Omega'\} \bigg ).
\tion
Finally, we let
\begin{enumerate}
\item $K:= K_1+\cdots+K_N$,
\item $(\calH_{\ell})_{\ell=0}^K$ be the lexicographical ordering of $(\calG_{n,k})_{(n,k) \in J_0}$,
\item $(\varphi_{\ell})_{\ell=1}^K$ be the lexicographical ordering of $(d_{n,k})_{(n,k)\in J}$, 
\item $(A_{\ell})_{\ell=0}^{K-1}$ be the lexicographical 
      ordering of $(A_{n-1,k})_{(n,k)\in J}$,
\item $(\varphi'_{\ell})_{\ell=1}^K$ be the lexicographical ordering of $(d'_{n,k})_{(n,k)\in J}$. 
\end{enumerate}
\medskip
\pagebreak
\begin{lemma}
\label{lem:A_sim_to_disjoint} 
The following holds true:
\begin{enumerate}
\item $(\varphi_\ell)_{\ell=1}^K$ is  $(\calH_{\ell})_{\ell=1}^K$-adapted and $\varphi_\ell$ is 
      independent from $\calH_{\ell-1}$.
\item $A_\ell\in \calH_\ell$ for $\ell\in \{0,\ldots,K-1\}$.
\item $\sum_{n=1}^N d_n = \sum_{\ell=1}^K \varphi_\ell 1_{A_{\ell-1}}$.
\item If $e_n := \sum_{k=1}^{K_n} d'_{n,k} 1_{A_{n-1,k}}$,
      then
      \begin{enumerate}
      \item $(e_n)_{n=1}^N$ is a decoupled tangent sequence of $(d_n)_{n=1}^N$ and
      \item $\sum_{n=1}^N e_n =  \sum_{\ell=1}^K \varphi'_\ell 1_{A_{\ell-1}}$.
      \end{enumerate}
\end{enumerate}
\end{lemma}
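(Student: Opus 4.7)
The plan is to dispatch the four items in order of increasing difficulty, namely (iii), (ii), (iv)(b), and then (i) together with (iv)(a). For (iii), since $(A_{n-1,k})_{k=1}^{K_n}$ partitions $\Omega$ and $d_{n,k}(\omega,\omega')\,1_{A_{n-1,k}}(\omega) = d_n(\omega)\,1_{A_{n-1,k}}(\omega)$ directly from the definition of $d_{n,k}$, summing over $k$ gives $d_n = \sum_{k=1}^{K_n} d_{n,k} 1_{A_{n-1,k}}$ pointwise, and a further sum over $n$ together with the lexicographic rewriting yields the claim. For (iv)(b) the identity is the same lexicographic reindexing. For (ii), note that in the lex ordering the index $A_\ell$ corresponds to $A_{n-1,k} \in \calF_{n-1}$ for the matching pair $(n,k)\in J$, and by construction $\calF_{n-1}\otimes\{\emptyset,\Omega'\}\subseteq \calG_{n-1,K_{n-1}}\subseteq \calH_\ell$.

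For (i), the $\calH_\ell$-measurability of $\varphi_\ell$ is immediate from the very definition of $\calG_{n,k}$. The substantive point is the unconditional independence $\varphi_\ell \perp \calH_{\ell-1}$, which I would prove by induction on $\ell$. Let $(n,k)\in J$ be the lex-image of $\ell$. An application of Lemma \ref{lem:aux_decoupling} with $\calG = \calF_{n-1}$, using the partition $(A_{n-1,j})_{j=1}^{K_n}\subseteq\calF_{n-1}$, the random variable $d_n$ (whose regular conditional distribution given $\calF_{n-1}$ is exactly $\sum_{j}\mu_{n,j}1_{A_{n-1,j}}$, so that hypothesis \eqref{eq:indep_cond} is satisfied by the definition of $\adapt_{p\textnormal{-simple}}$), yields that $(d_{n,1},\ldots,d_{n,K_n})$ are mutually independent and jointly independent of $\calF_{n-1}\otimes\{\emptyset,\Omega'\}$. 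To upgrade this to independence from all of $\calH_{\ell-1}$, I observe that $\calH_{\ell-1}$ is generated by $\calF_{n-1}\otimes\{\emptyset,\Omega'\}$, $\sigma(d_{n,1},\ldots,d_{n,k-1})$, and $\sigma(d_{m,j}: m<n,\,j\le K_m)$; the last family lies in $\calF_{n-1}\otimes\sigma(d'_{m,j}:m<n)$, whose auxiliary part is independent of $d'_{n,k}$ by construction of the family $(d'_{n,k})_{(n,k)\in J}$. A direct computation, splitting $\{d_{n,k}\in C\}$ along $A_{n-1,k}$ and its complement and using Fubini, then shows that for $A\in\calF_{n-1}$ and $B'\in\sigma(d'_{m,j}:m<n)$ one has $(\P\otimes\P')\bigl(\{d_{n,k}\in C\}\cap(A\times B')\bigr) = \mu_{n,k}(C)\,\P(A)\,\P'(B')$, which combined with the within-$n$ independence closes the inductive step.

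For (iv)(a) I take $\calH := \calF_N\otimes\{\emptyset,\Omega'\}$; since each $d_n$ is $\calF_n$-measurable, this $\calH$ contains $\sigma((d_n)_{n=1}^N)$. Tangency reduces to the observation that on the $\calF_{n-1}$-atom sitting inside $A_{n-1,k}$ one has $e_n = d'_{n,k}$, which has law $\mu_{n,k}$ and is independent of $\calH$ (it lives on $\Omega'$), so that $\P(e_n\in\cdot\,|\,\calF_{n-1}) = \P(e_n\in\cdot\,|\,\calH) = \sum_k \mu_{n,k}\,1_{A_{n-1,k}}$, matching the regular conditional distribution of $d_n$. For the conditional independence, on each atom of $\calH$ the partition $(A_{n-1,k_n})_n$ is determined, and the $e_n$'s reduce to distinct members of the family $(d'_{n,k})$, which are mutually independent by construction. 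The main obstacle is the independence step in (i), because Lemma \ref{lem:aux_decoupling} only produces independence within a single stage $n$, whereas $\calH_{\ell-1}$ mixes $\calF_{n-1}$-information from $\Omega$ with the history of $d_{m,j}$ for $m<n$, whose auxiliary components live on $\Omega'$; this mixing is untangled by exploiting the product structure $\Omega\times\Omega'$ together with the joint independence of $(d'_{n,k})_{(n,k)\in J}$.
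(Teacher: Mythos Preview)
Your proposal is correct and follows essentially the same route as the paper. The paper's own proof is extremely terse: for (ii), (iii) and (iv) it literally says ``follows by construction'' (with (iii) spelled out exactly as you do), and for (i) it reduces, for each fixed $n$, to the joint independence of $\calG_{n-1,K_{n-1}},d_{n,1},\ldots,d_{n,K_n}$, then uses the inclusion $\calG_{n-1,K_{n-1}}\subseteq \calF_{n-1}\otimes\sigma(d'_{m,j}:m<n)$ together with Lemma~\ref{lem:aux_decoupling} --- precisely your argument. Your filling-in of (iv)(a) with $\calH=\calF_N\otimes\{\emptyset,\Omega'\}$ is the natural choice and is what ``by construction'' is hiding; the only cosmetic point is that framing (i) as ``induction on $\ell$'' is unnecessary, since your argument for a generic $(n,k)$ does not actually invoke an inductive hypothesis.
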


\begin{proof}
(i) Fix $\ell\in \{ 1,\ldots,K\}$. By definition, $\varphi_\ell$ is  $\calH_\ell$-measurable. To show
that $\varphi_\ell$ is independent from $\calH_{\ell-1}$ it is enough to verify that 
\[ \calG_{n-1,K_{n-1}},d_{n,1},\ldots,d_{n,K_n} \]
are independent for $n\in \{1,\ldots,N \}$. Because 
\[
       \calG_{n-1,K_{n-1}}
    =  \calF_{n-1} \otimes \sigma \bigg (d'_{m,j} :  m\in \{1,\ldots,n-1\}, j\in \{1,\ldots,K_m\} \bigg ), 
\]
where for $n=1$ the second factor is replaced by $\{ \emptyset,\Omega'\}$, it remains to check that 
\[  \bigg (\calF_{n-1} \otimes \{ \emptyset,\Omega' \} \bigg ),d_{n,1},\ldots,d_{n,K_n} \]
are independent. But this follows from Lemma \ref {lem:aux_decoupling}.
\smallskip

(ii) and (iv) follow by construction.

(iii) follows from 
 $d_n(\omega) 
    = 
    \sum_{k=1}^{K_{n}} d_{n,k}(\omega,\omega') 1_{A_{n-1,k}}(\omega)$ for $n\in \{1,\ldots,N\}$.
\smallskip
\end{proof}

\begin{proof}[Proof of Proposition~\ref{prop:simple_to_Lp}]
\eqref{item:general_decoupling_2} $\Rightarrow$ \eqref{item:simple_decoupling_2} follows by 
Example~\ref{example:decoupled_sequence}.
\medskip
 
\eqref{item:simple_decoupling_2} $\Rightarrow$ \eqref{item:general_decoupling_simple}
follows from Lemma \ref{lem:A_sim_to_disjoint}.
\medskip

\eqref{item:general_decoupling_simple} $\Rightarrow$ \eqref{item:general_decoupling_2}
Let $(d_n)_{n\in\N}$ be as in \eqref{item:general_decoupling_2} with $d_n\equiv 0$ if $n>N$
for some $N\in \N$. By Corollary~\ref{cor:factor_through_subsigmaalgebra} we obtain for $n\in \{1,\ldots,N\}$
random variables $d_n^0\colon \Omega\times [0,1]\rightarrow X$ and $H_n\colon \Omega \times (0,1] \rightarrow [0,1]$
such that $d_n^0$ is $\calF_{n-1}\otimes \calB([0,1])/\calB(X)$-measurable, $H_n$ is independent of 
$\calF_{n-1}\otimes \{\emptyset,(0,1]\}$ with $\calL(H_n)=\lambda$,
and $d_n(\omega) = d^0_{n}(\omega,H_n(\omega,s))$ for all $(\omega,s)\in \Omega_n\times (0,1]$ for some $\Omega_n\in\calF_n$ 
of measure one. We define 
$\bar{\Omega} := \Omega \times (0,1]^N$,
$\bar{\calF}_0 := \calF_0\otimes \{ \emptyset, (0,1]^N  \}$, 
$\bar{\calF}_n := \calF_n \otimes \sigma (\pi_1,\ldots \pi_n)$, where 
$\pi_n : (0,1]^N \to (0,1]$ is the projection onto the $n$-th coordinate, and 
$\bar{\P}:= \P\otimes \lambda_N$ where $\lambda_N$ is
the Lebesgue measure on $(0,1]^N$. 
Then $\bar{H}_n, \bar{d}_n, \bar{e}_n \colon \bar{\Omega} \rightarrow [0,1]$
are given by $\bar{H}_n(\omega,s):=H_n(\omega,\pi_n(s))$, $\bar{d_n}(\omega,s) := d_n^{0}(\omega,\bar{H}_n(\omega,s))$, 
and $\bar{e}_n(\omega,s):=d^{0}_n(\omega,\pi_n(s))$, $n\in \{1,\ldots,N\}$. 
We get: 
\smallskip
\begin{enumerate}
 \item $\{ (\omega,s)\in \bar{\Omega} \colon \bar{d}_n(\omega,s) = d_n(\omega) \} \supseteq \Omega_n \times  (0,1]$
 \item The fact that $\bar{H}_n$ is uniformly-$[0,1]$ distributed and  independent of $\bar{\calF}_{n-1}$ and 
	$d_n^0$ is $\calF_{n-1}\otimes \calB([0,1])/\calB(X)$-measurable implies that $(\bar{e}_n)_{n\in\N}$ is an 
        $(\bar{\calF}_n)_{n=1}^{N}$-decoupled tangent sequence of $(\bar{d_n})_{n\in\N}$.
 \item The function $\kappa_{n-1}[\omega,B]:=\lambda(\{h\in [0,1] : d_n^0(\omega,h)\in B\})$, $B\in \calB(X)$, is a regular
       conditional probability for $\P(d_n\in \cdot\,|\,\calF_{n-1})$. 
\end{enumerate}
\smallskip

Next note that for all $\eps>0$ and $n\in \{1,\ldots,N\}$ 
there exists an $d^{0,\eps}_n\colon \Omega\times [0,1]\rightarrow X$ which satisfies  
\begin{equation*} 
 d^{0,\eps}_n(\omega,s) = \sum_{k=1}^{m_{\eps}} 1_{F_{n-1,k,\eps}}(\omega) f_{n,k,\eps}(s),
\end{equation*}
with $m_{\eps}\in \N$, $f_{n,k,\eps}\in \calL^p([0,1];X)$, pair-wise disjoint $F_{n-1,1,\eps},\ldots,F_{n-1,m_\eps,\eps} 
\in \calF_{n-1}$ of positive measure, and $\| d^{0,\eps}_n - d^{0}_n \|_{\calL^p(\Omega\times [0,1];X)} < \eps$. 
Moreover, we can pick $f_{n,k,\eps}$ such that $\calL(f_{n,k,\eps})\in \calP$.
Indeed, let $(f_\ell)_{\ell\in \N}\subset \calL^p([0,1];X)$ be dense. For $\eps>0$ we can choose an appropriate 
$\eta=\eta(p,\eps)>0$ and define 
\begin{align*}
 S_1&:= \{ \omega\in \Omega: \|d_n^0(\omega,\cdot) - f_1 \|_{\calL^p([0,1];X)} < \eta\},\\
 S_2&:= \{ \omega\in \Omega: \|d_n^0(\omega,\cdot) - f_2 \|_{\calL^p([0,1];X)} < \eta\}\setminus S_1, \ldots.
\end{align*}
From these sets $(S_n)_{n\in\N}$ we extract the collection $(F_{n-1,k,\eps})_{k=1}^{m_\eps}$, find
$\omega_k\in F_{n-1,k,\eps}\cap \kappa_{n-1}^{-1}(\calP)$, and let $f_{n,k,\eps}:=d_n^0(\omega_k,\cdot)$.
We continue and define $\bar{d}_n^{\eps}, \bar{e}_n^{\eps} \colon \bar{\Omega} \rightarrow [0,1]$
by $\bar{d}_n^{\eps}(\omega,s) := d_n^{0,\eps}(\omega,\bar{H}_n(\omega,s))$, 
and $\bar{e}_n^{\eps}(\omega,s):=d^{0,\eps}_n(\omega,\pi_n(s))$, $n\in \{1,\ldots,N\}$.
By construction we have
\begin{enumerate}
 \item $\| \bar{e}^{\eps}_n - \bar{e}_n \|_{\calL^p(\bar{\Omega};X)} = 
       \| \bar{d}^{\eps}_n - \bar{d}_n \|_{\calL^p(\bar{\Omega};X)} < \eps$ for all $n\in \{1,\ldots,N\}$,
 \item $(\bar{e}_n^{\eps})_{n=1}^{N}$ is an $(\bar{\calF}_n)_{n=1}^{N}$-decoupled
        tangent sequence of $(\bar{d_n}^{\eps})_{n\in\N}$,
 \item a conditional regular conditional probability kernel of $d_n^{\eps}$ given $\bar{\calF}_{n-1}$
is given by $\sum_{k=1}^{m_{\eps}} 1_{F_{n-1,k,\eps}}(\omega) \calL(f_{n,k,\eps})$.
\end{enumerate}
This concludes \eqref{item:general_decoupling_simple} $\Rightarrow$ \eqref{item:general_decoupling_2}.
\end{proof}

\subsection{Proof of Theorem \ref{thm:simple_decoupling_of_vs_full_decoupling}}
\label{subsec:Proof:simple_decoupling_of_vs_full_decoupling}
\eqref{item:simple_decoupling} $\Rightarrow$ \eqref{item:general_decoupling} 
First we apply Proposition \ref{prop:extend} to deduce from 
Theorem \ref{thm:simple_decoupling_of_vs_full_decoupling} \eqref{item:simple_decoupling} the statement 
in  Proposition \ref{prop:extend}  \eqref{item:v_adapted}.
Now we use Proposition \ref{prop:simple_to_Lp} \eqref{item:simple_decoupling_2} for $\calP_{p\textnormal{-ext}}$ instead of for 
$\calP$ and obtain Proposition \ref{prop:simple_to_Lp} \eqref{item:general_decoupling_2} for $\calP_{p\textnormal{-ext}}$, which is
Theorem \ref{thm:simple_decoupling_of_vs_full_decoupling} \eqref{item:general_decoupling}.


\section{Extrapolation}
\label{sec:extrapolation}

The following extrapolation result is standard. It was proved for the general decoupling (which is not exactly the same as what we state in 
Proposition \ref{prop:extrapolation_decoupling_dyadic_martingales} below) in \cite{CoxVeraar:2011} or can be proved by more general results 
(for example, from  \cite{Geiss:1997}). For the convenience of the reader we include a proof for our setting.

\begin{proposition}
\label{prop:extrapolation_decoupling_dyadic_martingales}
Let $X$ be a Banach space and let $D_q(X)\in [0,\infty]$, $q\in (0,\infty)$,
be as defined in Definition~\ref{def:constants}. If there exists a $p\in (0,\infty)$
such that $D_p(X)<\infty$, then $D_q(X)<\infty$ for all $q\in (0,\infty)$.
\end{proposition}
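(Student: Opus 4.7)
The plan is a standard good-$\lambda$ extrapolation, whose key structural input is the $\F^r$-stability of the decoupling: for any $\F^r$-stopping time $\tau$, both $v_{n-1}1_{\{\tau\ge n\}}$ and $v_{n-1}1_{\{\tau<n\}}$ are $\calF^r_{n-1}$-measurable, so the stopped and co-stopped processes $M^\tau$ and $M-M^\tau$ are dyadic martingales in the sense of Definition~\ref{def:constants} and both satisfy the $L^{p_0}$-decoupling with constant $C_0:=D_{p_0}(X)<\infty$. Moreover, since $(r_{\tau+j},r'_{\tau+j})_{j\ge 1}$ conditional on $\calG_\tau:=\calF^r_\tau\vee\calF^{r'}_\tau$ remains an independent pair of Rademacher sequences, the continuation pair $(M-M^\tau,M'-M'^\tau)$ on $n>\tau$ is a dyadic tangent pair under $\P(\cdot|\calG_\tau)$, and the $L^{p_0}$-decoupling upgrades to a $\calG_\tau$-conditional inequality. (Throughout, $q\in(0,\infty)$ is arbitrary and $M_n:=\sum_{k=1}^n r_k v_{k-1}$, $M'_n:=\sum_{k=1}^n r'_k v_{k-1}$, $M^*_N:=\max_{n\le N}\|M_n\|_X$, $M'^*_N:=\max_{n\le N}\|M'_n\|_X$, $V^*_N:=\max_{n\le N}\|v_{n-1}\|_X$.)

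Fixing $\lambda>0$ and $\delta\in(0,\tfrac12)$, I would let $\sigma:=\inf\{n:\|M_n\|_X>\lambda\}$, an $\F^r$-stopping time, and
\[
E_\lambda:=\{\|M_N\|_X>2\lambda,\ V^*_N\vee M'^*_N\le\delta\lambda\}.
\]
On $E_\lambda$, the triangle inequality combined with $V^*_N\le\delta\lambda$ gives $\|M_\sigma\|_X\le(1+\delta)\lambda$, whence $\|M_N-M^\sigma_N\|_X>(1-\delta)\lambda\ge\lambda/2$; simultaneously $\|M'_N-M'^\sigma_N\|_X\le 2M'^*_N\le 2\delta\lambda$. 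Combining the $\calG_\sigma$-conditional $L^{p_0}$-decoupling with Chebyshev's inequality, and exploiting the truncation $\|M'_N-M'^\sigma_N\|_X\le 2\delta\lambda$ forced by $E_\lambda$ to extract the factor $\delta^{p_0}$, yields the good-$\lambda$ estimate $\P(E_\lambda)\le c_1\,\delta^{p_0}\,\P(M^*_N>\lambda)$.

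From $\P(\|M_N\|_X>2\lambda)\le\P(V^*_N>\delta\lambda)+\P(M'^*_N>\delta\lambda)+\P(E_\lambda)$, integration against $q\lambda^{q-1}d\lambda$, together with L\'evy's maximal inequality for the $\F^r$-conditionally symmetric martingale $M$ (which via the standard reflection argument gives $\|M^*_N\|_{\calL^q}\le 2^{1/q}\|M_N\|_{\calL^q}$), and choosing $\delta$ small enough so that $2c_1\delta^{p_0}2^q<1$ absorbs the tail term, yields $\|M_N\|_{\calL^q}\le c_q(\|V^*_N\|_{\calL^q}+\|M'^*_N\|_{\calL^q})$. Finally, the pointwise identity $\|v_{n-1}\|_X=\|M'_n-M'_{n-1}\|_X\le 2M'^*_N$ (valid for each $r'$) gives $\|V^*_N\|_{\calL^q(\P_\D)}\le 2\|M'^*_N\|_{\calL^q(\P_\D\otimes\P'_\D)}$, while conditional on $r$, $M'_N=\sum r'_n v_{n-1}$ is a sum of independent symmetric $X$-valued summands, so L\'evy's inequality conditional on $r$ yields $\|M'^*_N\|_{\calL^q}\le 2^{1/q}\|M'_N\|_{\calL^q}$, whence $D_q(X)<\infty$. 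The hard part will be the good-$\lambda$ estimate: since $E_\lambda$ is $r'$-dependent through $M'^*_N$ while the $L^{p_0}$-decoupling naturally acts only on $\F^r$-predictable truncations, extracting the factor $\delta^{p_0}$ in $\P(E_\lambda)$ must be done at the level of $\calG_\sigma$-conditional expectations, exploiting that $(M-M^\sigma,M'-M'^\sigma)$ is a dyadic tangent pair under $\P(\cdot|\calG_\sigma)$.
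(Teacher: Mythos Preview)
Your plan is the same good-$\lambda$ extrapolation as the paper's, and the endgame (L\'evy's inequality for $M$ and for $M'$, the pointwise bound $V^*_N\le 2M'^*_N$) is correct. The genuine gap is precisely where you flag ``the hard part''. Conditional decoupling plus Chebyshev only give, on $\{\sigma\le N\}$,
\[
\P\bigl(\|M_N-M^\sigma_N\|_X>\tfrac{\lambda}{2}\,\big|\,\calG_\sigma\bigr)\ \le\ \Bigl(\frac{2C_0}{\lambda}\Bigr)^{p_0}\E\bigl[\|M'_N-M'^\sigma_N\|_X^{p_0}\,\big|\,\calG_\sigma\bigr],
\]
and to reach $\P(E_\lambda)\le c_1\delta^{p_0}\P(M^*_N>\lambda)$ you need the right-hand side to be $O(\delta^{p_0})$. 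But the bound $\|M'_N-M'^\sigma_N\|_X\le 2\delta\lambda$ you wish to ``exploit'' is valid only \emph{on} $E_\lambda$, an event that is \emph{not} $\calG_\sigma$-measurable (it involves $M'^*_N$, hence $r'_{\sigma+1},\dots,r'_N$). A pointwise bound holding only on a non-$\calG_\sigma$-measurable set cannot be fed into a $\calG_\sigma$-conditional expectation; the latter is in general of the order of the full decoupled martingale and carries no $\delta$. So ``exploiting the truncation forced by $E_\lambda$'' is not a step --- it \emph{is} the problem, and $\calG_\sigma$-conditioning alone does not solve it.

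The paper's device is to introduce, besides $\mu:=\inf\{n:\|f_n\|>\lambda\}$ (your $\sigma$) and $\nu:=\inf\{n:\|f_n\|>\beta\lambda\}$, a \emph{third} stopping time
\[
\sigma_{\mathrm{pap}}:=\inf\{n\ge0:\|g_n\|\vee\|v_n\|>\delta\lambda\}
\]
built from the decoupled side, and to compare the doubly-stopped processes ${}^{\mu}\!f^{\,\nu\wedge\sigma_{\mathrm{pap}}}$ and ${}^{\mu}\!g^{\,\nu\wedge\sigma_{\mathrm{pap}}}$ via $D_p(X)$. Because $\sigma_{\mathrm{pap}}$ halts $g$ (and the increments $v$) before either exceeds $\delta\lambda$, one gets the \emph{pointwise} estimate $\|{}^{\mu}\!g^{\,\nu\wedge\sigma_{\mathrm{pap}}}\|_X\le 3\delta\lambda$ on $\{\sigma_{\mathrm{pap}}>\mu\}\subseteq\{f^*_N>\lambda\}$ (and $=0$ elsewhere), hence $\|{}^{\mu}\!g^{\,\nu\wedge\sigma_{\mathrm{pap}}}\|_{L^p}^p\le(3\delta\lambda)^p\,\P(f^*_N>\lambda)$, and the $\delta^p$ factor drops out of the $L^p$-inequality directly, with no conditioning. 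The predictable multiplier $v_{j-1}1_{\{\mu<j\le\nu\wedge\sigma_{\mathrm{pap}}\}}$ is then $\calF^{r,r'}_{j-1}$-adapted rather than $\calF^{r}_{j-1}$-adapted, but for each frozen value of $r'$ all three stopping times become $\F^r$-stopping times, so the multiplier \emph{is} $\calF^r_{j-1}$-measurable and the raw definition of $D_p(X)$ applies before one averages over $r'$. Your single $\F^r$-stopping time cannot manufacture this pointwise control of the decoupled side: one must stop the decoupled process itself, and that is the missing idea.
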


\begin{proof}
Fix $N\in \{2,3,\ldots\}$. Let $(r_n)_{n=1}^N$, $(r'_n)_{n=1}^N$ be independent Rademacher 
sequences on a probability space $(\Omega,\calF,\P)$ and let $v_0\in X$ and
$h_n\colon \{-1,1\}^{n}\rightarrow X$, $n\in \{1,\ldots,N-1\}$, be given.
Define $v_n:\Omega \to X$ by
$v_n  := h_n(r_1,\ldots,r_n)$ for $n\in \{1,\ldots,N-1\}$, and $v_N := 0 \in X$ for notational convenience. Let
$f_0=g_0:=0\in X$, and 
\[
    f_n := \sum_{j=1}^{n} r_j v_{j-1} 
    \sptext{1}{and}{1}
    g_n := \sum_{j=1}^{n} r_j' v_{j-1}
\]
for $n\in \{1,2,\ldots,N\}$.
Fix $\lambda\in (0,\infty)$ and let $\delta, \beta \in  (0,\infty)$ satisfy
$\beta>1+\delta$. Define the stopping times $\mu,\nu,\sigma:\Omega\to \{1,\ldots,N+1\}$ by
 \begin{align*}
   \mu & := \min\{n\in \{1,2,\ldots,N\} \colon \n f_n \n_{X} > \lambda\},\\
   \nu & := \min\{n\in \{1,2,\ldots,N\} \colon \n f_n \n_{X} > \beta\lambda\},\\
\sigma & := \min\{n\in \{0,1,\ldots,N\} \colon 
		\max\{ \n g_n \n_{X}, \| v_n \|_X \}
		> \delta\lambda 
	      \}, 
  \end{align*}
with $\min\emptyset := N+1$. Define
$f^*_N,g^*_N,v_{N-1}^*\colon \Omega \rightarrow \R$ by
$f^*_N := \sup_{n\in \{1,\ldots,N\}} \| f_n \|_X$, 
$g^*_N := \sup_{n\in \{1,\ldots,N\}} \| g_n \|_X$, 
$v^*_{N-1} := \sup_{n\in \{0,\ldots,N-1\}} \| v_n \|_X$, 
and
\[  ^{\mu}\!f^{\nu\minsym \sigma}_N := 
    \sum_{j=2}^N r_{j}v_{j-1} 1_{\{ \mu < j \leq \nu \minsym \sigma \}}
    \sptext{1}{and}{1}
    ^{\mu}\!g^{\nu\minsym \sigma}_N 
   := \sum_{j=2}^N r'_j v_{j-1} 1_{\{ \mu < j \leq \nu \minsym \sigma \}}.
\] 
By definition of $D_p(X)$ it holds that 
 \begin{align}\label{eq:stopdecoup}
   \| ^{\mu}\!f^{\nu\minsym \sigma}_N \n_{L^p(X)}
  \leq D_p(X) \n ^{\mu}\!g^{\nu\minsym \sigma}_N \n_{L^p(X)}. 
 \end{align}
On the set $\{ \sigma > \mu \}$ we have
\begin{align*}
  \n ^{\mu}\!g_N^{\nu\minsym \sigma} \n_{X} 
    & = \n g_{\nu\minsym \sigma \minsym N} - g_{\mu} \n_{X}
      \leq \n v_{(\nu\minsym \sigma \minsym N) -1} \n_X  + \| g_{(\nu \minsym \sigma \minsym N)-1} \|_X 
      + \| g_{\mu} \|_X
    \leq 3\delta\lambda.
\end{align*}
As $^{\mu}\!g_N^{\nu\minsym \sigma} = 0$ on $\{ \sigma \leq \mu\}$, 
it follows that
\begin{equation}\label{eq:supgest}
  \| ^{\mu}\!g^{\nu\minsym \sigma}_{N}\|_{L^p(X)} 
    \leq 3 \delta \lambda \left[\P( \sigma > \mu)\right]^{\frac{1}{p}} 
    \leq 3 \delta \lambda \left[\P(f^*_N>\lambda)\right]^{\frac{1}{p}}.
\end{equation}
On the other hand, on the set $\{ \nu \leq N, \sigma = N+1\}$ we have
$\nu > 1$ because $\delta<\beta$ and thus on that set we have
\begin{equation}\label{eq:fest}
  \n ^{\mu}\!f_N^{\nu\minsym \sigma} \n_{X}  
    \geq \n f_{\nu} - r_{\mu} v_{\mu-1} - f_{\mu-1} \n_X 
    \geq \beta\lambda - \delta\lambda -\lambda.
\end{equation}
By \eqref{eq:fest}, Chebyshev's inequality, and estimates~~\eqref{eq:stopdecoup} and~\eqref{eq:supgest}
it follows that
\begin{equation}\label{eq:lambest}
\begin{aligned}
   \P( f^*_N > \beta \lambda, 
    g^*_N \maxsym v_{N-1}^* \leq \delta\lambda) 
&  =  \P(\nu\leq N, \sigma = N+1)\\
& \leq \P(\n ^{\mu}\!f_N^{\nu\minsym \sigma} \n_{X} 
	    \ge (\beta-\delta-1)\lambda) \\
& \leq (\beta-\delta-1)^{-p}\lambda^{-p} 
	    \n ^{\mu}\!f_N^{\nu\minsym \sigma} \n_{L^p(X)}^p\\
& \leq [D_p(X)]^p (\beta-\delta-1)^{-p}\lambda^{-p}
	    \n {}^{\mu}\!g^{\nu\minsym \sigma}_N \n_{L^p(X)}^p \\
& \leq [ 3\delta D_p(X)]^p (\beta-\delta-1)^{-p}
	    \P(f^*_N>\lambda).
\end{aligned}
\end{equation}
As $\lambda>0$ was arbitrary, it follows from~\eqref{eq:lambest}
that for all $q\in (0,\infty)$ it holds that 
\begin{equation}\label{eq:fpest}
\begin{aligned}
\E |f^*_N|^q
  & \leq 
    \beta^{q}
      [3\delta D_p(X)]^p 
      (\beta-\delta-1)^{-p} 
      \E |f^*_N|^q
    + \beta^{q}\delta^{-q} 
      \big[
	\E |g^*_N|^q + \E |v_{N-1}^*|^q
      \big].
\end{aligned}
\end{equation}
Fix $q\in (0,\infty)$, and set $\beta := 2$ and 
$ \delta := 2^{-1-\inv{p}-\frac{q}{p}}3^{-1}[D_p(X)]^{-1}.$
Note that $D_p(X)\geq 1$ as $X\not =\{0\}$, so that
$\delta\leq \frac{1}{2}$, 
$\beta-\delta-1\geq \frac{1}{2}$, and 
\begin{align*}
\beta^{q}
  [3\delta D_p(X)]^p 
  (\beta-\delta-1)^{-p} 
\leq \tinv{2}.
\end{align*}
By the L\'evy-Octaviani lemma
applied conditionally
(see e.g.~\cite[Corollary 1.1.1]{KwapienWoyczynski:1992})
we have
$
\max\{ \E|v_{N-1}^*|^q , \E|g^*_N|^q \}\leq 2\E\n g_N \n^q_{ X}.
$
By substituting this into~\eqref{eq:fpest} we obtain
   \[ E |f^*_N|^q \leq \tfrac{1}{2} \E |f^*_N|^q + 4 \beta^{q}\delta^{-q} \E \|g_N\|_X^q
   \sptext{1}{and hence}{1}
   E |f^*_N|^q \leq 8\cdot 2^{q}\delta^{-q} \E \|g_N\|_X^q. \]
Recalling the choice of $\delta$, we obtain
\[     8\cdot 2^{q}\delta^{-q}
   \le 8\cdot 2^{q} [2^{-1-\inv{p}-\frac{q}{p}}3^{-1}[D_p(X)]^{-1}]^{-q}
    =  8\cdot 2^{2q +\frac{q}{p}+\frac{q^2}{p}}3^q [D_p(X)]^q. \]
We conclude that $D_q(X) \leq 3\cdot 2^{\frac{3}{q}+2+\frac{1}{p}+\frac{q}{p}} D_p(X)$,
which completes the proof.
\end{proof}


\section{}

\begin{lemma}
\label{lemma:*-hull}
Assume a  metric space $(M,d)$ and a continuous map $*:M\times M \to M$ with  
$(x*y)*z=x*(y*z)$ for $x,y,z\in M$. Let $\emptyset \not = \calP\subseteq M$ and 
\[ \overline{\calP}^* := {\rm cl}_d(\{ x_1 * \cdots * x_L : x_1,\ldots,x_L\in \calP, L\in \N\}) \]
where the closure on the right side is taken with respect to $d$. Then one has  
$\overline{(\overline{\calP}^*)}^*= \overline{\calP}^*$ and 
$\overline{\calP}^*$ is the smallest $d$-closed set $\calQ$ with  
$\calQ\supseteq \calP$ and $\mu * \nu \in \calQ$ for all $\mu,\nu \in \calQ$.
\end{lemma}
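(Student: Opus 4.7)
Write $S := \{ x_1 * \cdots * x_L \colon x_1,\ldots,x_L \in \calP,\, L\in \N \}$, so that $\overline{\calP}^* = \operatorname{cl}_d(S)$. The whole argument hinges on the single observation that $\overline{\calP}^*$ is itself closed under $*$; once this is established, both claims follow by soft set-theoretic reasoning.

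First I would verify that $S$ is closed under $*$: given $u = x_1*\cdots *x_L \in S$ and $v = y_1*\cdots *y_K \in S$, associativity of $*$ yields $u*v = x_1*\cdots*x_L*y_1*\cdots*y_K \in S$ (no bracketing is needed thanks to associativity). Next I would lift this to the closure: for $u,v\in \overline{\calP}^*$, pick sequences $(u_n),(v_n)\subseteq S$ with $u_n\to u$ and $v_n\to v$ in $d$. By continuity of $*\colon M\times M\to M$ we have $u_n * v_n \to u*v$, and since $u_n*v_n \in S \subseteq \overline{\calP}^*$ and $\overline{\calP}^*$ is $d$-closed, we conclude $u*v \in \overline{\calP}^*$. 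Thus $\overline{\calP}^*$ is a $d$-closed set containing $\calP$ and stable under $*$.

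For the second assertion, let $\calQ$ be any $d$-closed set with $\calP\subseteq \calQ$ that is closed under $*$. A trivial induction on $L$ gives $S\subseteq \calQ$, and since $\calQ$ is $d$-closed we obtain $\overline{\calP}^* = \operatorname{cl}_d(S)\subseteq \calQ$. Together with the previous paragraph, this identifies $\overline{\calP}^*$ as the smallest such set.

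Finally, for $\overline{(\overline{\calP}^*)}^* = \overline{\calP}^*$: the inclusion ``$\supseteq$'' is immediate since every $x\in \overline{\calP}^*$ appears as a length-one product, so $\overline{\calP}^*\subseteq \{x_1*\cdots*x_L : x_i\in \overline{\calP}^*\}\subseteq \overline{(\overline{\calP}^*)}^*$. For ``$\subseteq$'', an induction on $L$ using the stability of $\overline{\calP}^*$ under $*$ (from the first paragraph) shows that $\{x_1*\cdots*x_L : x_i\in \overline{\calP}^*\}\subseteq \overline{\calP}^*$, and taking $d$-closures (using that $\overline{\calP}^*$ is $d$-closed) yields $\overline{(\overline{\calP}^*)}^*\subseteq \overline{\calP}^*$. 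There is no real obstacle here; the only place any nontrivial hypothesis is used is the joint continuity of $*$, needed to pass from stability of $S$ under $*$ to stability of its closure under $*$.
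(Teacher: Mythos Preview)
Your proof is correct and follows essentially the same approach as the paper. The only organizational difference is that the paper first establishes the idempotence $\overline{(\overline{\calP}^*)}^*= \overline{\calP}^*$ via continuity of $*$ and a ``standard diagonalization procedure'' and then deduces closure of $\overline{\calP}^*$ under $*$, whereas you reverse the order: you first prove $\overline{\calP}^*$ is closed under $*$ (using continuity to pass from $S$ to its closure) and then obtain idempotence as an immediate consequence. Your ordering is arguably cleaner, since it makes the diagonalization step unnecessary.
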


\begin{proof}
The equality $\overline{(\overline{\calP}^*)}^*= \overline{\calP}^*$  follows from the continuity of $*$ and a standard diagonalization procedure. 
This also implies that $\mu * \nu \in \overline{\calP}^*$ for all $\mu,\nu \in \overline{\calP}^*$.
Now let us assume a set $\calQ$ as in the assertion. Then $x_1 * \cdots * x_L\in \calQ$ for all
$x_1,\ldots,x_L\in \calP$. As $\calQ$ is closed we deduce $ \overline{\calP}^*\subseteq\calQ$.
\end{proof}


{\bf Acknowledgments.}
The first author is supported by the research programme {\em VENI Vernieuwingsimpuls} with project 
number $ 639.031.549 $, which is financed by the Netherlands Organization for Scientific Research (NWO).
The second author is supported by the project {\em Stochastic Analysis and Nonlinear Partial 
Differential Equations, Interactions and Applications} of the Academy of Finland with project number 298641.
The authors wish to thank Mark Veraar, Peter Spreij and 
Stanis{\l}aw Kwapie{\'n} for useful comments. In 
particular, we thank 
Stanis{\l}aw Kwapie{\'n} for
Theorem ~\ref{thm:char_p-ext_Rademacher}.
The first author would also like to thank Lotte Meijer.


\def\polhk#1{\setbox0=\hbox{#1}{\ooalign{\hidewidth
  \lower1.5ex\hbox{`}\hidewidth\crcr\unhbox0}}}

\end{document}